\DeclareMathSymbol{\leqslant}{\mathalpha}{AMSa}{"36} % nicer `smaller or equal'
\DeclareMathSymbol{\geqslant}{\mathalpha}{AMSa}{"3E} % nicer `larger or equal'
\renewcommand{\leq}{\;\leqslant\;}                   % redef. of < or =
\renewcommand{\geq}{\;\geqslant\;}                   % redef. of > or =
\newtheorem{Th}{Theorem}
\newtheorem{Le}[Th]{Lemma}
\newtheorem{Pro}[Th]{Proposition}
\newtheorem{Def}[Th]{Definition}
\newtheorem{Cor}[Th]{Corollary}
\newtheorem{Rq}{Remark}
\newcommand{\cF}{\ensuremath{\mathcal F}}
\newcommand{\cH}{\ensuremath{\mathcal H}}
\newcommand{\cO}{\ensuremath{\mathcal O}}
\newcommand{\bbR}{{\ensuremath{\mathbb R}} }
\newcommand{\ub}{\bar{u}}
\newcommand{\fb}{\bar{f}}
\newcommand{\gb}{\bar{g}}
\newcommand{\hb}{\bar{h}}
\newcommand{\N}{\mathbb{N}}
\newcommand{\be}{\begin{equation}}
\newcommand{\ee}{\end{equation}}
\newcommand{\beq}{\begin{eqnarray}}
\newcommand{\eeq}{\end{eqnarray}}
\newcommand{\1}{{1} \hspace{-0.25 em}{\rm I}}
\newcommand{\eps}{\varepsilon}
\newcommand{\Ne}{\N^{\ast}}
\newcommand{\HH}{{\cal H}}
\newcommand{\HHl}{{\cal H}_{loc}}
\newcommand{\OO}{{\cal O}}
\newcommand{\PP}{{\cal P}}
\newcommand{\R}{\mathbb{R}}
\newcommand{\dis}{\displaystyle}
\newcommand{\ced}{\end{proof}}
\newcommand{\p}{^{\prime}}
\newcommand{\vphi}{\varphi}
\newcommand{\disp}{\displaystyle}
\newcommand{\tu}{\tilde{u}}
\newcommand{\txi}{\tilde{\xi}}
\newcommand{\tw}{\tilde{w}}
\newcommand{\vep}{\varepsilon}
\newcommand{\tA}{\tilde{A}}
\newcommand{\tz}{\tilde{z}}
\begin{document}
\begin{frontmatter}
\title{Maximum principle for  quasilinear SPDE's on a bounded  domain without regularity assumptions  }
\date{}
\runtitle{}
\author{\fnms{Laurent}
 \snm{DENIS}\corref{}\ead[label=e1]{ldenis@univ-evry.fr}}
\thankstext{T1}{The work of the first author is supported by the chair \textit{risque de cr\'edit}, F\'ed\'eration bancaire Fran\c{c}aise}
\address{D\'epartement de Math\'ematiques\\ Equipe Analyse et Probabilit\'es\\Universit\'e
d'Evry-Val-d'Essonne\\23 Boulevard de France\\91037 EVRY Cedex
-FRANCE\\\printead{e1}}
% \affiliation{Some University}

\author{\fnms{Anis}
 \snm{MATOUSSI}\corref{}\ead[label=e2]{anis.matoussi@univ-lemans.fr}}
\thankstext{t2}{The work of the second author is  supported  by the Chair \textit{Financial Risks of the Risk Foundation} sponsored by Société Générale,  CMAP \'Ecole Polytechnique. }
\address{Laboratoire Manceau de Mathématiques \\
Université du Maine\\Avenue Olivier Messiaen\\72 085 LE MANS Cedex - FRANCE \\\printead{e2}}
% \affiliation{Some University}
\runauthor{L. Denis and A. Matoussi}

\begin{abstract}

We prove a maximum principle  for  local
solutions of  quasi-linear parabolic stochastic PDEs, with non-homogeneous second order operator on a bounded domain and driven by a space-time white noise. Our method based on an approximation  of the domain and the coefficients of the operator, does not require  regularity assumptions.  As in previous works \cite{DMS1, DMS2} the results  are
consequences  of It\^{o}'s formula and estimates for the positive
part of  local solutions which are non-positive on the lateral
boundary.

\end{abstract}

\begin{keyword}[class=AMS]
\kwd[Primary ]{60H15; 60G46; 35R60}
\end{keyword}

\begin{keyword}
\kwd{Stochastic PDE's, maximum principle, comparison Theorem, Green function}
\end{keyword}
\end{frontmatter}
\section{Introduction}

In the theory of deterministic Partial Differential Equations, the maximum
principle plays an important role since it gives a relation between the bound of
the solution on the boundary and a bound on the whole domain. In the deterministic case, the
maximum principle for quasi-linear parabolic equations was  proved
by Aronson -Serrin (see Theorem 1 of \cite{Aronson}).\\
In a previous work \cite{DMS2}, we have  adapted the method of these authors to the stochastic framework and proved  maximum principle
for SPDE's with homogeneous second order operator and driven by a finite dimensional Brownian motion.
 The aim of the present paper is to generalize  these results to the case of SPDE's with non-homogeneous second order operator and driven by a noise which is  white in time and colored in space. In \cite{DMS1} and \cite{DMS2},   many proofs are based on the notion of semigroup associated to the second order operator and on the regularizing property of the semigroup. But now,  since in this present paper the operator is non homogeneous we can not follow exactly the same proofs and so we work with the Green function associated to the operator and use heavily the results of Aronson \cite{Aronson3} on the existence and the Gaussian estimates of the weak fundamental solution of a parabolic PDE.

More precisely, we study the following stochastic partial
differential equation (in short SPDE) for a real-valued random
field $u_t\left( x\right)
=u\left( t,x\right) ,$%
\begin{equation}
\begin{split}
\label{e0}
  du_t (x)  =
\Big( \sum_{i=1}^d \partial_i &\big[\sum_{j=1}^d\ a_{i,j}(t,x)\partial_j u_t (x)
+g_i(t,x,u_t(x),\nabla u_t (x)) \big]+f(t,x,u_t (x),\nabla u_t (x))
\Big) dt\\
 &  + \sum_{i=1}^{+\infty}h_i(t,x,u_t(x),\nabla u_t(x))\, dB^i_t,\\
\end{split}
\end{equation}  with a given initial condition $u_0=\xi ,$ where $a$ is a time-dependant
symmetric, uniformly elliptic, measurable matrix defined on  some
bounded open domain ${\cal O}\subset \mathbb{R}^d$ and $%
f,g_i,i=1,\cdots ,d,h_j,j=1,2, \cdots$ are nonlinear random functions.\\

This class of SPDE's has been widely studied by many authors (see \cite{KR}, \cite{CM}, \cite{SW},....) but in all these references,  regularity assumptions are made on the boundary of the domain or on $a$ which permit to use Sobolev embedding theorems or/and  regularity of the Green function.
Since in this work coefficients $a_{i,j}$ and the domain $\cO$ are not smooth, the associated Green function is not regular enough, so one more time we follow the ideas of Aronson (see \cite{Aronson3}). The method consists in  approximating the domain by an increasing sequence of smooth domains and the matrix $a$ by a sequence of smooth matrices. We first prove existence and uniqueness for the SPDE \eqref{e0} with null Dirichlet condition on the boundary. Then we get some estimates of the positive part of a local solution which is non-negative on the boundary and this permits to get a comparison Theorem and a maximum principle, in this part only we assume that the boundary of the domain is Lipschitz . This yields for example the following result:

\begin{Th}
Let $(M_t)_{t\geq 0}$ be an Itô process satisfying some
integrability conditions, $p\geq 2$ and $u$ be a local weak
solution of (\ref{e0}). Assume that $\partial O$ is Lipschitz and
that $u\leq M$ on the parabolic boundary $\left\{ [0,T[\times \partial {\cal %
O}\right\} \cup \left\{ \left\{ 0\right\} \times {\cal O}\right\}
$, then for all $t\in [0,T]$: $$ E\left\| \left( u-M\right)
^{+}\right\| _{\infty ,\infty ;t}^p\le k\left( p,t\right) E\left( \|\xi - M_0\|_{\infty}^p+
\left\| (f^{0,M})^{+}\right\| _{\theta ,t}^{*p}+\left\|
|g^{0,M}|^2\right\| _{\theta ;t}^{*p/2}+\left\|
|h^{0,M}|^2\right\| _{\theta ;t}^{*p/2}\right) $$
where $f^{0,M}(t,x)=f(t,x,M,0),$ $g^{0,M}(t,x)=g(t,x,M,0),$ $%
h^{0,M}(t,x)=h(t,x,M,0)\,$and $k$ is a function which only depends
on the structure constants of the SPDE, $\Vert \cdot \Vert _{\infty
,\infty ;t}$ is the uniform norm on $[0,t]\times {\cal O}$ and
$\left\| \cdot \right\| _{\theta ;t}^{*}$  is a certain norm which
is precisely defined below.
\end{Th}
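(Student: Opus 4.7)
The strategy is the stochastic adaptation of the Aronson-Serrin maximum principle, following the same path as in \cite{DMS1,DMS2} but with the semigroup arguments replaced throughout by arguments based on the Green function of the non-homogeneous second-order operator, whose Gaussian bounds come from Aronson \cite{Aronson3}. The first step is a reduction: setting $v_t = u_t - M_t$ and applying It\^o's formula to $M$, $v$ satisfies a quasi-linear SPDE of the same form as (\ref{e0}) with modified coefficients $\tilde f,\tilde g_i,\tilde h_j$ which, at $(v,\nabla v)=(0,0)$, reduce precisely to $f^{0,M}$, $g^{0,M}$, $h^{0,M}$, and with boundary data $v\leq 0$ on the parabolic boundary. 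It thus suffices to bound the positive part of a local solution that is non-positive on the parabolic boundary --- exactly the framework announced before the theorem.

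Next, I would introduce an Aronson-type approximation: an increasing exhaustion of $\cO$ by smooth subdomains $\cO_n$, together with a sequence of smooth symmetric uniformly elliptic matrices $a^{(n)}$ converging to $a$, such that the associated parabolic operators admit classical Green functions $G_n$ with two-sided Gaussian bounds whose constants depend only on the ellipticity and boundedness constants of $a$. Combined with the existence and uniqueness result for null Dirichlet data proved earlier in the paper, this yields approximating solutions $v^{(n)}$ on $\cO_n$ which converge to $v$; the Lipschitz regularity of $\partial\cO$ is used here to control the boundary behaviour of the $v^{(n)}$.

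The heart of the proof is a uniform-in-$n$ estimate of the form claimed for $(v^{(n)})^+$. Following the Aronson-Serrin scheme, I would work with the truncations $w_k := (v^{(n)}-k)^+$ for $k \geq 0$: applying It\^o's formula first to $\frac{1}{2}\int w_k^2\,dx$ and then to $(\int w_k^2\,dx)^{p/2}$, the uniform ellipticity of $a^{(n)}$ produces a coercive $\int|\nabla w_k|^2$ term which absorbs the quadratic gradient contributions from $\tilde g$ and $\tilde h$, leading to an energy inequality
\[
E\sup_{s\leq t}\Bigl(\int w_k^2(s)\,dx\Bigr)^{p/2} \leq C\,E\!\int_0^t\!\!\int_{\{v^{(n)}>k\}} \bigl[\tilde f^+\, w_k + |\tilde g|^2 + |\tilde h|^2\bigr]\,dx\,ds,
\]
the martingale contribution being controlled by Burkholder-Davis-Gundy. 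The Gaussian bound on $G_n$ provides a Nash-Moser/Sobolev-type embedding uniform in $n$, which upgrades this $L^2$ energy estimate into $L^\infty$ control once iterated over a geometric sequence of levels $k_m\uparrow\infty$ in the spirit of De Giorgi; summing the resulting geometric series yields the estimate with a constant $k(p,t)$ depending only on the structure constants of the SPDE.

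The last step is to pass to the limit $n\to\infty$, using Fatou on the left together with the convergence of the approximating coefficients on the right, which delivers the stated bound in terms of $f^{0,M}$, $g^{0,M}$, $h^{0,M}$. The principal obstacle lies in the previous step: the exponent $\theta$ of the $\|\cdot\|^{*}_{\theta;t}$-norm must be chosen at the critical scaling of the Gaussian heat kernel so that the De Giorgi iteration actually closes, and simultaneously the stochastic integrals arising at each level of the iteration must be handled by BDG in a way that preserves integrability across the geometric sequence --- this stochastic extension of the borderline Aronson-Serrin computation, together with the necessity of working with the non-smooth Green function rather than a semigroup, is the main new difficulty relative to \cite{DMS2}.
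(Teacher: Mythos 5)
Your overall architecture (reduce to $v=u-M\le 0$ on the parabolic boundary, then estimate $v^{+}$) matches the paper's, but the heart of your argument diverges from the paper's and contains a genuine gap. You propose to run the De Giorgi/Moser iteration directly on the local solution, after approximating it by solutions $v^{(n)}$ on an exhaustion of smooth subdomains $\cO_n$. That approximation step is not well defined: a local solution in $\mathcal{U}_{loc}(\xi,f,g,h)$ carries no usable boundary data on the interior boundaries $\partial\cO_n$, so there is no natural Dirichlet problem on $\cO_n$ whose solutions converge to $v$. The subdomain-plus-mollified-coefficients scheme in the paper is used only to construct the Green function and the solution of the \emph{linear} equation with null Dirichlet data on the whole of $\cO$ (Theorem \ref{MainTheo}); it is never used to approximate a local solution of the nonlinear equation. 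The paper circumvents exactly this difficulty by a comparison argument: it introduces the global solution $v=\mathcal{U}(\xi^{+},f+f^{0,-},g,h)$ with zero Dirichlet conditions, to which the uniform-norm estimate of Theorem \ref{Lpestimate} (imported from \cite{DMS1}) already applies, and then shows $u\le v$ via the comparison Theorem \ref{comparison}. The level/moment iteration is therefore never performed on the local solution; what is needed for the local solution is only an $L^2$-type estimate of the positive part (Corollary \ref{positive-estimate}), and that is the real technical content of this paper.

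That $L^2$ estimate rests in turn on the It\^{o} formula for $\varphi(u^{+})$ when $u$ is merely a local solution with $u^{+}\in\cH$ (Lemma \ref{lemme14}), and this is also where you misplace the Lipschitz hypothesis on $\partial\cO$. It is not used to ``control the boundary behaviour of the $v^{(n)}$''; it is used, via Grisvard's Hardy inequality $w/\rho\in L^2(\cO)$ for $w\in H^1_0(\cO)$, to produce a cutoff sequence $\phi_n$ with $\phi_n w\to w$ in $H^1_0(\cO)$ and $\sup_n\|\phi_n w\|_{H^1_0(\cO)}\le C\|w\|_{H^1_0(\cO)}$ (Lemma \ref{Troncation}); one applies the It\^{o} formula of Theorem \ref{existence} to $\phi_n u$, which does solve an equation with null Dirichlet data, and then passes to the limit. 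Any correct proof must either reproduce this localization or replace it by an equivalent device; your sketch does not address how It\^{o}'s formula for the truncations $(v-k)^{+}$ of a local solution is to be justified, which is precisely the new difficulty relative to the deterministic Aronson--Serrin argument. A minor further point: $\theta$ is a datum of hypothesis (HD$\theta p$), not a parameter to be tuned to the Gaussian scaling of $G$; what closes the iteration in \cite{DMS1} is $\theta<1$ together with the strengthened contraction condition $\alpha+\frac{\beta^2}{2}+72\beta^2<\lambda$.
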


 For the references concerning the study of the $L^p$ norms w.r.t. the randomness of  uniform norm on the trajectories of a stochastic PDE,  see \cite{DMS2}.
 Let us also mention that some $L^p$-estimates have been established by Kim \cite{Kim} for linear parabolic spde's on Lipschitz domain and that Krylov \cite{Krylov2}  obtained a maximum principle for the same class of  SPDE's.\\
 The paper is organized as follows : in section \ref{preliminaries}
we introduce notations and hypotheses and we take care to detail
the integrability conditions which are used  all along the paper.
%%%all the integrability conditions which play an important role
In section \ref{DirichletCondition} we establish an It\^{o} formula for the solution and prove existence and uniqueness of this solution with null Dirichlet condition on the boundary. In section  \ref{LocalSolutions}, we prove an It\^{o}'s formula and estimates for the positive part of a local solution which is non-positive on the boundary of the domain and obtain  a
comparison Theorem which leads to our main result: the maximum principle Theorem. The last section is an Appendix devoted to the definitions of some functional spaces that we use and to the proofs of some technical results.

\section{Preliminaries}\label{preliminaries}
\subsection{$L^{p,q}$-spaces}
\label{lpq}
Let ${\cal O}$ be an open bounded domain in $\mathbb{{ R}}^d.$ The space $%
L^2\left( {\cal O}\right) $ is the basic Hilbert space of our
framework and
we employ the usual notation for its scalar product and its norm,%
$$ \left( u,v\right) =\int_{{\cal O}}u\left( x\right) v\left(
x\right) dx,\;\left\| u\right\|=\left( \int_{{\cal O}}u^2\left(
x\right) dx\right) ^{\frac 12}. $$ In general, we shall extend the
notation
\[ (u,v)=\int_{\cO} u(x)v(x)\, dx,\]
where $u$, $v$ are measurable functions defined on $\cO$ such that $uv \in L^1 (\cO )$.\\
The first order Sobolev space
of functions vanishing at the
boundary will be denoted as usual by $H_0^1\left( {\cal O}%
\right) .$ Its natural scalar product and norm are%
$$
\left( u,v\right) _{H_0^1\left( {\cal O}\right) }=\left( u,v\right) +\sum_{i=1}^d\int_{%
{\cal O}} \partial _i u\left( x\right)
\partial _iv\left( x\right) dx,\,\left\| u\right\| _{H_0^1\left(
{\cal O}\right) }=\left( \left\| u\right\| _2^2+\left\| \nabla
u\right\| _2^2\right) ^{\frac 12}. $$ We shall denote  by  $
H_{loc}^1 (\cO)$ the space of functions which are locally square
integrable in $ \mathcal{O}$ and which admit first order derivatives
that are also locally square integrable.\\
Another Hilbert space that we use is the second order Sobolev space $H^2_0 (\cO )$ of  functions vanishing at the
boundary and twice differentiable in the weak sense.

For each $t>0$ and for all real numbers $p,\,q\geq 1$, we denote by $%
L^{p,q}([0,t]\times {\cal O})$ the space of (classes of) measurable
functions $u:[0,t]\times {\cal O}\longrightarrow \mathbb{{R}}$ such
that
$$
\Vert u\Vert _{p,q;\,t}:=\left( \int_0^t\left( \int_{{\cal O}%
}|u(s,x)|^p\,dx\right) ^{q/p}\,ds\right) ^{1/q} $$ is finite. The
limiting cases with $p$ or $q$ taking the value $\infty $ are also
considered with the use of the essential sup norm. 

The space of  measurable functions $u:\mathbb{{R}}_{+}\rightarrow
L^2\left(
{\cal O}\right) $ such that $\left\| u\right\| _{2,2;t}<\infty ,$ for each $%
t\ge 0,$ is denoted by $L_{loc}^2\left( \mathbb{{R}}_{+};L^2\left( {\cal O}%
\right) \right) ,$ where $\R_+$ denotes the set of non-negative real numbers. Similarly, the space $L_{loc}^2\left( \mathbb{{R}}%
_{+};H_0^1\left( {\cal O}\right) \right) $ consists of all
measurable
functions $u: \mathbb{{R}}_{+}\rightarrow H_0^1\left( {\cal O}\right) $ such that%
$$ \left\| u\right\| _{2,2;t}+\left\| \nabla u\right\|
_{2,2;t}<\infty , $$ for any $t\ge 0.$\\

 We recall that the Sobolev inequality states that%
$$ \left\| u\right\| _{2^{*}}\le c_S\left\| \nabla u\right\| _2,
$$ for each $u\in H_0^1\left( {\cal O}\right) ,$ where $c_S>0$ is
a constant
that depends on the dimension and $2^{*}=\frac{2d}{d-2}$ if $d>2,$ while $%
2^{*}$ may be any number in $]2,\infty [$ if $d=2$ and $2^{*}=\infty $ if $%
d=1.$\\ Finally, we introduce the following norm which is obtained by interpolation in $L^{p,q}$-spaces:
$$ \left\| u\right\| _{\#;t}=\left\|
u\right\| _{2,\infty ;t}\vee \left\| u\right\|
_{2^{*},2;t}, $$
and we denote by $L_{\# ;t}$ the set of functions $u$ such that $\left\| u \right\| _{\#;t}$ is finite. Its dual space is a functional space: $L^*_{\#;t}$ equipped with the norm $\parallel\ \parallel^*_{\#;t}$ and we have
\begin{equation}\label{Holder2} \int_0^t\int_{{\cal O}}u\left( s,x\right) v\left( s,x\right)
dxds\le \left\| u\right\| _{\#;t}\left\| v\right\|_{\#;t}^*, \end{equation} for
any $u\in L_{\#;t}$ and $v\in L_{\#;t}^*. $\\
 See Appendix \ref{lpq} for more details on these spaces.
\subsection{Hypotheses and definitions}\label{hypo}

We consider a sequence $((B^i
(t))_{t\geq 0})_{i\in\Ne}$ of
 independent Brownian motions defined on a standard filtered probability space $(\Omega ,\cF ,(\cF )_{t\geq 0}, P)$ satisfying the usual conditions.
 Let  $ a$ be  a measurable and symmetric matrix defined
on $ \R_+\times\cO$ . We assume that there exist positive constants
$ \lambda $ , $ \Lambda $ and $M$ such that for all $\xi \in \bbR^{
d} $ and almost all $(t,x)\in \R_+ \times \cO$:
\begin{equation}
\label{ellip} \lambda |\xi|^2 \, \leq \sum_{i,j} a_{i,j} (t,x)
\xi^i \, \xi^j \, \leq \, \Lambda |\xi|^2  \makebox{
and } |a_{i,j} (t,x)|\leq M.
\end{equation}
 Let $\Delta =\{(t,x,s,y)\in\R_+ \times \cO\times \R_+ \times \cO; t> s\}.$ We denote  by $G:\Delta \mapsto \R_+$ the
  weak fundamental solution of the problem\\
  \begin{equation}
  \label{weak:fundamental}
\partial_t G (t,x;s,y)  -
\sum_{i,j=1}^d \partial_i a_{i,j}(t,x)\partial_j G (t,x;s,y) =0
\end{equation}
with  Dirichlet boundary condition $ G(t,x,s,y) = 0 , \quad
\mbox{for all} \; (t,x) \in \; (s,\, +\infty ) \times \partial \cO
\,$  and where for $i\in \{1,\cdots ,d\}$, $\partial_i$ denotes the 
partial derivative of oder $1$ with respect to $x_i$.\\
Sometimes, for convenience, we shall restrict ourselves to a finite time-interval, that's why we fix a time $T>0$.\\
Following Aronson (\cite{Aronson3}), Theorem 9 (iii) p. 671,  we
have the following estimate: \begin{equation}{\label{Gaussestimate}}
G(t,x,s,y)\leq C (t-s)^{-\frac d2}\exp\{-\varrho
\frac{|x-y|^2}{8(t-s)}\},\end{equation} for all
$(t,x,s,y)\in\Delta$ with $t\leq T$,  where $C$ and $\varrho$ are
positive constants depending only on $T$ and the structure
constants i.e. $\lambda$, $\Lambda$ and the Lebesgue measure of $\cO$. Let us point out that we do not assume that
coefficients are smooth or that $\partial \cO$ is regular.

 We consider predictable random
functions
\begin{eqnarray*}
f &:&\bbR_{+}\times \Omega \times  \cO \times \bbR \times \bbR^{d}
\rightarrow \bbR \;, \\
g &=& (g_1,...,g_d) \, : \, \bbR_{+}\times \Omega \times  \cO
\times \bbR \times \bbR^{d}
\rightarrow \bbR^{d}\;\\
h=(h_1 ,\cdots, h_i ,\cdots ) &:&\bbR_{+}\times \Omega \times  \cO \times \bbR \times \bbR^{d}
\rightarrow \bbR^{\Ne} \;,\\
\end{eqnarray*}
where $\Ne$ denotes the set of positive integers.\\
In the sequel, $|\cdot|$ will always denote the underlying Euclidean or $l^2$-norm. For example
 $$|h(t,w,x,y,z)|^2=\sum_{i=1}^{+\infty} h_i(t,w,x,y,z)|^2.$$
 We define
\begin{equation*}
\begin{split}
 &f ( \cdot
,\cdot,\cdot, 0,0):=f^0 \\
& h ( \cdot,\cdot, \cdot ,0,0):=h^0 = (h^0_1,..., h^0_{i}, \cdots) \\
& g( \cdot,\cdot,\cdot ,0,0) :=g^0 = (g_1^0,...,g_d^0).\\
\end{split}
\end{equation*}
 We still consider the
quasilinear stochastic partial differential equation \eqref{e0}
   for the real-valued random  field $ u_t(x)$, that we rewrite as:
 {\small
\begin{equation}
\begin{split}
\label{e1}
  du_t (x)  =
 \left( \sum_{i,j=1}^d \partial_i a_{i,j}(t,x)\partial_j u_t (x)
+f(t,x,u_t (x),\nabla u_t (x))\right. &+ \left. \sum_{i=1}^d
\partial_i
g_i(t,x,u_t(x),\nabla u_t (x)) \right) dt\\
 &  + \sum_{i=1}^{+\infty}h_i(t,x,u_t(x),\nabla u_t(x))\, dB^i_t,\\
\end{split}
\end{equation}}
with initial condition $ u (0,.) = \xi (.) $. Let us point out that in the equation  \eqref{e1}, the divergence term $\partial_i
g_i(t,x,u_t(x),\nabla u_t (x)) $ has to be understod as
$$ \displaystyle\frac{\partial}{\partial x_i}\left( 
g_i(t,x,u_t(x),\nabla u_t (x)) \right),$$
and is defined rigorously in the weak sense (by integration by parts).
%
% and Dirichlet
%boundary condition
%$$ u_t (x) = 0 , \quad  \mbox{for all} \; (t,x) \in \; (0,\, +\infty ) \times \partial \cO
%\, .$$

We also assume  that $ \xi $ is a  $\mathcal{F}_0$-measurable,
$L^{2} (\cO)$-valued random variable. We consider  the following sets of assumptions : \\[0.2cm]
 \textbf{Assumption (H)}: There exist  non negative
 constants $C,\, \alpha, \,\beta $ such that for almost all $\omega$, the following inequalities hold for all $(x,y,z,t)\in \cO\times \R\times \R^d\times \R_+$:
\begin{enumerate}
\item[\textbf{(i)}]
 $ |f(t,\omega,x,y,z) -f( t,\omega, x,y^{\p},z^{\p}) | \leq \, C\big(|
y-y^{\p}| + |z-z^{\p}| \big) $
\item[\textbf{(ii)}] $  \Big(| h(
t,\omega,x,y,z) -h( t,\omega,x,y^{\p},z^{\p})
|^2\Big)^{\frac{1}{2}}\leq C \, | y-y^{\p}|+\, \beta \, |z-z^{\p}|,$
\item[\textbf{(iii)}]
  $ \Big(\sum_{i=1}^{d}| g_i(
t,\omega,x,y,z) -g_i( t,\omega,x,y^{\p},z^{\p})
|^2\Big)^{\frac{1}{2}} \leq  C \, | y-y^{\p}|+ \, \alpha \, |
z-z^{\p}|. $
\item[\textbf{(iv)}]  the contraction property  :
$ \alpha  +\dis \frac{\beta ^{2}}{2}  < \lambda\, $.
\end{enumerate}
Moreover we  introduce some integrability conditions on  $f^0, \;
g^0, \, h^0$ and the initial data $\xi$ : \\[0.2cm]
% \textbf{Assumption (HD)} {\it local integrability conditions on $f^0$,
%$g^0$ and $h^0$} :
%$$ E \, \int_{0}^t \int_K \big( |f_t^0(x)| + |g_t^0 (x)|^2 +
%|h_t^0|^2 \, \big) dx dt < \infty
%$$for any compact set $K \subset \mathcal{O}$, and for any $t \geq
%0$.
%\\[0.2cm]
% \textbf{Assumption (HI)} {\it local
%integrability condition on the initial condition }: $$  E \int_K
%|\xi (x)|^2 dx < \infty
%$$for any compact set $K \subset \mathcal{O}$.
%\\[0.2cm]
\textbf{Assumption (HD\#)}
$$ E\left( \left( \left\| f^0\right\|_{\#;t}^*\right) ^2+\left\|
g^0\right\| _{2,2;t}^2+\left\| h^0\right\| _{2,2;t}^2\right) <\infty
, $$ for each $t\ge 0.$
\\[0.2cm]
Sometimes we shall consider the  following stronger 
conditions:\\[0.2cm]
\textbf{Assumption (HD2)}%
$$ E\left(  \left\| f^0\right\|_{2,2;t}^2+\left\|
g^0\right\| _{2,2;t}^2+\left\| h^0\right\| _{2,2;t}^2\right) <\infty
, $$ for each $t\ge 0.$
\\[0.2cm]
\textbf{Assumption (HI2)}  {\it integrability condition on the
initial condition} : $$  E \|\xi \|^2  < \infty .
$$
\begin{Rq}
Note that $\left( 2,1\right) $ is the pair of conjugates of the pair
$\left( 2,\infty \right) $ and so $\left( 2,1\right) $ belongs to
the set $I^{\prime }$ which defines the space $L_{\#;t}^*$ (see the Appendix for more details). Since
$\left\| v\right\| _{2,1;t}\le \sqrt{t}\left\| v\right\| _{2,2;t}$
for each $v\in L^{2,2}\left( \left[
0,t\right] \times {\cal O}\right) ,$ it follows that%
$$ L^{2,2}\left( \left[ 0,t\right] \times {\cal O}\right) \subset
L^{2,1;t}\subset L_{\#;t}^*, $$ and $\left\| v\right\|_{\#;t}^*\le
\sqrt{t}\left\| v\right\| _{2,2;t},$ for each $v\in L^{2,2}\left(
\left[ 0,t\right] \times {\cal O}\right) .$ This shows that the
condition \textbf{(HD\#)} is weaker than \textbf{(HD2)}.
\end{Rq}
% The weaker
%integrability conditions \textbf{(HD)} and \textbf{(HI)} are  assumed to
%hold all along this paper. The other stronger integrability conditions will
%be mentioned whenever we will assume them
%for a certain number $p \geq 2$.
\subsection{Main example of stochastic noise}
\label{mainex} Let $W$ be a
 noise white in time and colored in space, defined on a standard filtered probability
 space  $ \big( \Omega,\cF, (\cF_t)_{t \geq 0}, P \, \big)$ whose covariance function is given by:
 \[ \forall s,t\in\R_+ ,\ \forall x,y\in\cO,\ \ E[\dot{W} (x,s)\dot{W}(y,t)]=\delta (t-s)k(x,y),\]
 where $k:\cO\times \cO  \mapsto \R_+$ is a symmetric and measurable function.\\
 Consider the following SPDE driven by $W$:
 {\small
\begin{equation}
\begin{split}
\label{eW}
  du_t (x)  =
 \big( \sum_{i,j=1}^d \partial_i a_{i,j}(t,x)\partial_j u_t (x)
+f(t,x,u_t (x),\nabla u_t (x)) &+  \sum_{i=1}^d
\partial_i
g_i(t,x,u_t(x),\nabla u_t (x)) \big) dt\\
 &  \hspace{-1 cm}+ \tilde{h} (t,x,u_t(x),\nabla u_t(x))\, W(dt,x),\\
\end{split}
\end{equation}  }
where $f$ and $g$ are as above and $\tilde{h}$ is a random real valued function.\\
We assume that the covariance function $k$ defines a trace class operator
 denoted by $K$ in $L^2 (\mathcal{O})$. It is well known (see  \cite{RN}) that there exists an orthogonal
 basis $(e_i )_{i\in \Ne}$ of $L^2 (\cO )$ consisting of eigenfunctions of
 $K$ with corresponding eigenvalues $(\lambda_i )_{i\in\Ne}$ such that
 \[ \sum_{i=1}^{+\infty} \lambda_i <+\infty ,\]
 and
 \[ k(x,y)=\sum_{i=1}^{+\infty} \lambda_i e_i (x)e_i (y).\]
It is also well known that there exists a sequence $((B^i
(t))_{t\geq 0})_{i\in\Ne}$ of
 independent standard Brownian motions such that
 \[ W(dt, \cdot )=\sum_{i=1}^{+\infty}\lambda_i^{1/2} e_i B^i (dt).\]
 So that equation \eqref{eW} is equivalent to \eqref{e1} with $h=(h_i)_{i\in\Ne}$ where
 $$\forall i\in\Ne,\ h_i (s,x, y,z)=\sqrt{\lambda_i}\tilde{h}(s,x,y,z)e_i (x).$$
%If $v\in L^2 (\cO )$, we denote
%\[ \parallel v\parallel^2_K =\int_{\cO}\int_{\cO } v(x)k(x,y)v(y)\,
%dxdy .\]
Assume as in \cite{SW} that for all $i\in\Ne$,  $\| e_i \|_{\infty} <+\infty $ and
$$\sum_{i=1}^{+\infty} \lambda_i \| e_i \|_{\infty}^2 <+\infty.$$
Since
 $$  \Big(| h(
t,\omega,x,y,z) -h( t,\omega,x,y{\p},z{\p})
|^2\Big)^{\frac{1}{2}}\leq\left( \sum_{i=1}^{+\infty} \lambda_i \| e_i \|_{\infty}^2\right) \left| \tilde{h} (t,x,y,z)-\tilde{h}(t,x,y{\p},z{\p})\right|^2,$$
$h$ satisfies the Lipschitz hypothesis {\bf (H)-(ii)} if  $\tilde{h}$ satisfies a similar  Lipschitz hypothesis.
%\begin{Rq} One can easily verify that if
% \[\sup_{x\in\cO} \int_{\cO} k^2 (x,y)dy <+\infty,\]
% then hypotheses (H1) are fulfilled.
% \end{Rq}
%See for example \cite{DZ},\cite{SW}.
\subsection{Spaces of processes and notion of weak solutions}
\label{Weak solutions}
We shall denote by $\PP$ the set of predictable processes which
admit a version in $L_{loc}^2\left( { %
\mathbb{R}}_{+};L^2( {\cal O}\right) )$.\\
We now introduce $\cH =\cH(\mathcal{O})$, the space of
  $H_0^1(\mathcal{O})$-valued
  predictable processes $(u_t)_{t \geq 0}$ such that
\[
\| u\|_t=\left( E \sup_{0\leq s\leq t} \left\| u_{s}\right\|
^{2}+E\int_{0}^{t}\,\|\nabla u_{s}\|^2 dt\right)
^{1/2}\;< \; \infty \;,\quad \mbox{for each} \; t>0 \, .
\]
We define $\HH_{loc}=\HH_{loc}(\mathcal{O})$ to be the set of $H^1_{loc} (\OO
)$-valued predictable processes such that for any compact subset $K$ in $\OO$
and all $t>0$:

\[
\left( E \sup_{0\leq s\leq t}\int_K | u_{s}(x)|^{2}\, dx
+E\int_{0}^{t}\int_K |\nabla u_{s}(x)|^2\, dx dt\right) ^{1/2}\;< \;
\infty .
\]
We also denote by $\hat{F}$ the subspace of elements in $\HH$ which are $L^2 (\cO )$-continuous.
Moreover, we  denote by $\HH_T$ (resp. $\hat{F}_T$ ) the set of processes which are the restrictions to $[0,T]$ of elements in $\HH$ (resp. $\hat{F}$). Let us remark that $(\hat{F}_T ,\| \cdot \|_T )$ is a Banach space.\\
The space of test functions is $\mathcal{D}=\mathcal{C}%
_{c}^{\infty } (\R_+ )\otimes \mathcal{C}_c^2 (\cO )$, where $\mathcal{C}%
_{c}^{\infty } (\R_+ )$ denotes the space of all real valued infinitely
differentiable  functions with compact support in $\mathbb{R}^+$ and
$\mathcal{C}_c^2 (\cO )$ the set
of $C^2$-functions with compact support in $\cO$.\\

\begin{Def}
We say that $ u \in \HHl $ is a weak solution of equation $\left(
\ref{e1}\right) $ with initial condition $\xi$ if the following
relation holds almost surely, for each $\varphi \in \mathcal{D},$
\begin{equation}
\label{weak2}
\begin{split}
\int_{0}^{\infty }[&\left( u_{s},\partial _{s}\varphi \right) -
\sum_{i,j=1}^d\int_{\cO}a_{i,j}(s,x)\partial_i u_{s}(x)\partial_j\varphi _{s}(x)dx +\left( f\left(
s,u_{s},\nabla u_s\right) ,\varphi _{s}\right)\\& -\sum_{i=1}^d \left(
g_i\left( s, u_{s},\nabla u_s\right) ,\partial_i \varphi _{s}\right) ]ds+\; \sum_{i=1}^{+\infty} \int_{0}^{\infty }\left( h_i\left( s, u_{s},\nabla u_s\right)
,\varphi
_{s}\right) dB^i_{s}+\left( \xi ,\varphi _{0}\right) =0.\\
\end{split}
\end{equation}
We denote by $ \mathcal{U}_{loc} (\xi,f,g,h)$ the set of
all such solutions $u$.\\

\end{Def}

 In general we do not know much about the set ${\cal U}_{loc}\left(
\xi ,f,g,h\right) $. It may be empty or may contain several
elements.  As the Sobolev space $%
H_0^1\left( {\cal O}\right) $ consists of functions which vanish  at the boundary $\partial {\cal O},$ we  say
that a solution which belongs to $\cH $ satisfies the zero Dirichlet
conditions at the boundary of ${\cal O}.$ \\
 We
denote by ${\cal U}\left( \xi ,f,g,h\right) $ the solution of
(\ref{e1}) with zero Dirichlet boundary conditions whenever it
exists and is unique, we shall prove that this is the case for example under {\bf (H)}, {\bf (HI2)} and {\bf (HD2)}.

We should also note that if the conditions \textbf{(H)},
\textbf{(HD2)} and \textbf{(HI2)} are satisfied and if $u$ is a
process in $\cH ,$ the relation from this definition holds with any
test function $\varphi \in {\cal D}$ if and
only if it holds for any test function in ${\cal C}_c^\infty \left( {\bf R}%
_{+}\right) \otimes H_0^1\left( {\cal O}\right) .$ In fact, in
this case,
one may use as space of test functions any space of the form ${\cal C}%
_c^\infty \left( {\bf R}_{+}\right) \otimes V,$ where $V$ is a
dense subspace of $H_0^1\left( {\cal O}\right) ,$ obtaining
equivalent definitions
of the notion of solution with null Dirichlet conditions at the boundary of $%
{\cal O}.$\\
Let us now  precise the sense in which a solution is dominated on
the lateral boundary. 

\begin{Def}{\label{Def3}}
If $v$ belongs to $H^1_{loc} (\OO )$, we say that
$v$ is non-positive on the boundary of $\cO$ if $v^+$ belongs to $H^1_0 (\cO)$ and denotes it simply:
$v\leq 0$ on $\partial \cO$.
%Assume that $v$ belongs to $H^1_{loc} (\OO' )$
%where $\OO' $ is a larger open set such that $\overline{\OO}\subset
%\OO'.$ Then it is well known that the condition $v^{+}_{|\OO} \in
%H_0^1(\mathcal{O})$ expresses the boundary relation $v\leq 0$ on
%$\partial {\cal O}$. Similarly, if a process $u$ belongs to
%$\HH_{loc}(\mathcal{O'}),$ then the condition  $u^{+}_{|\OO}\in
%\HH(\OO)$ ensures  the inequality $u\leq 0$ on the lateral boundary
%$\left\{ [0,\infty[\times \partial {\cal O}\right\}$.
\end{Def}

\section{Existence, uniqueness and estimates of the solution with null-Dirichlet condition}\label{DirichletCondition}
\subsection{Notion of mild solution}
We now turn out to the notion of {\it mild} solution:
\begin{Def} We say that $ u \in \cH $ is a mild solution of equation $\left(
\ref{e1}\right) $ with initial condition $\xi \in L^2(\Omega \times
\mathcal{O}),$ if for all $t\in \R_+$,
\begin{equation}\label{mild0} \begin{split}
u_t (x )=&\int_{\cO}G(t,x,0,y)\xi(y)\, dy
+\int_0^t\int_{\cO} G(t,x,s,y)f(s,y ,u_s (y),\nabla u_s (y))dy
ds \\
&\ \ +\sum_{i=1}^{d}\int_0^t\int_{\cO}
G(t,x,s,y)\partial_{i}g_i(s,. ,u_s ,\nabla u_s )(y)dy ds
\\&\ \ +\sum_{i=1}^{+\infty}\int_0^t\int_{\cO} G(t,x,s,y)h_i(s,y ,u_s (y),\nabla
u_s (y))dB^i_s .\\
\end{split}
\end{equation}
\end{Def}
Let us remark that thanks to Gaussian estimate (\ref{Gaussestimate}), all the
quantities in (\ref{mild0}) are well defined excepted the term
\[\int_0^t\int_{\cO}
G(t,x,s,y)\partial_{i,y}g_i(s,. ,u_s ,\nabla u_s )(y)dy ds.\]
This last term has to be understood in the weak sense  thanks to the following
 Proposition:
\begin{Pro}{\label{opU}} Let $U: \left( C_c^{\infty} (\R_+
)\otimes H^1_0 (\cO )\right)^d \longrightarrow \hat{F}$ be defined by
\[\forall \tilde{w}\in \left( C_c^{\infty} (\R_+
)\otimes H^1_0 (\cO )\right)^d ,\  \forall t\geq 0 ,\ (U \tilde{w})_t =\sum_{i=1}^d \int_0^t G(t,\cdot ,s
,y)\partial_{i,y}\tilde{w}_{i,s} (y) dy ds .\]The operator $U$ admits a uniquely determined
continuous extension
\[ U: L^2 _{loc} (\R_+ ;L^2 ( \cO )^d )\longrightarrow \hat{F},\]
that we still denote \[\forall t\geq 0 ,\ (U\tilde{w})_t
=\sum_{i=1}^d\int_0^t G(t,\cdot ,s ,y)\partial_{i,y}\tilde{w}_{i ,s}(y) dy ds
.\]Moreover if $\tilde{w}\in L^2 _{loc} (\R_+ ;L^2 ( \cO )^d )$, $u=U\tilde{w}$ is
the weak solution of
\[ du_t (x) =
 \sum_{i,j=1}^d \partial_i a_{i,j}(t,x)\partial_j u_t
 (x)+\sum_{i=1}^d \partial_i \tilde{w}_i \ ,\ u_0 =0 ,\]
and it satisfies the following relation:
\begin{equation}\label{Append1} \frac 12 \|u_t \|^2 +\int_0^t \sum_{i,j=1}^d\int_{\cO}
a_{i,j}(s,x)\partial_i u_s (x)\partial_j u_s (x)\, dx\, ds
=-\sum_{i=1}^d \int_0^t \left(  \tilde{w}_{i,s} ,\partial_iu_s \right)\,
ds, \ \ t\geq 0.\end{equation}
As a consequence, we have the following
estimate:
\begin{equation}\label{Append2}\| u\|_T^2\leq C_{\lambda}\int_0^T \| \tilde{w}_s \|^2 \, ds ,\end{equation}
where $C_{\lambda}$ is a constant depending only on $\lambda$.
\end{Pro}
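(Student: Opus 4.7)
The plan is to build $U$ by identifying $U\tilde{w}$ with the weak solution of the deterministic Cauchy-Dirichlet problem $\partial_t u_t = \sum_{i,j}\partial_i a_{i,j}\partial_j u_t + \sum_i \partial_i \tilde{w}_{i,t}$ with $u_0=0$ for smooth $\tilde{w}$, derive the energy identity and the coercive estimate in that smooth case, and then extend by density. For $\tilde{w} \in (C_c^{\infty}(\R_+)\otimes H_0^1(\cO))^d$, the Gaussian bound \eqref{Gaussestimate} ensures that each Green-function integral in the definition of $U\tilde{w}$ is absolutely convergent and lies in $L^2(\cO)$. Using the defining property of the weak fundamental solution $G$ given by \eqref{weak:fundamental} together with Duhamel's principle, one verifies that $u = U\tilde{w}$ belongs to $L^2(0,T;H_0^1(\cO)) \cap C([0,T];L^2(\cO))$ and is a weak solution of this problem, the divergence source $\sum_i\partial_i\tilde{w}_{i,t}$ being interpreted as an element of $H^{-1}(\cO)$. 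By standard variational parabolic theory it is also the unique such solution.

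Next I would derive the energy identity \eqref{Append1}. Since $u \in L^2(0,T;H_0^1)$ with $\partial_t u \in L^2(0,T;H^{-1})$, the chain rule in the Gelfand triple $H_0^1 \hookrightarrow L^2 \hookrightarrow H^{-1}$ yields $\tfrac{1}{2}\|u_t\|^2 = \int_0^t \langle \partial_s u_s, u_s\rangle\, ds$. Substituting the PDE for $\partial_s u_s$ and integrating by parts in space against $u_s \in H_0^1$, separately on the second-order term and on the divergence source, produces precisely \eqref{Append1}. The ellipticity assumption \eqref{ellip} then gives $\sum_{i,j}\int_\cO a_{i,j}\partial_j u_s\,\partial_i u_s \geq \lambda\|\nabla u_s\|^2$, while Cauchy-Schwarz and Young's inequality absorb each term $|(\tilde{w}_{i,s},\partial_i u_s)|$ into $\tfrac{1}{2\lambda}\|\tilde{w}_{i,s}\|^2+\tfrac{\lambda}{2}\|\partial_i u_s\|^2$. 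Summing over $i$ and taking the supremum over $t\leq T$ on the left leads to
\begin{equation*}
\sup_{t\leq T}\|u_t\|^2 + \lambda\int_0^T \|\nabla u_s\|^2\, ds \leq \frac{1}{\lambda}\int_0^T \|\tilde{w}_s\|^2\, ds,
\end{equation*}
which is \eqref{Append2} with a constant $C_\lambda$ depending only on $\lambda$.

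For the extension, $(C_c^{\infty}(\R_+)\otimes H_0^1(\cO))^d$ is dense in $L^2_{loc}(\R_+;L^2(\cO)^d)$, and the estimate \eqref{Append2} shows that $U$ is a bounded linear map from this dense subspace into the Banach space $(\hat{F}_T,\|\cdot\|_T)$ for every $T$. It therefore admits a unique continuous extension to $L^2_{loc}(\R_+;L^2(\cO)^d)$. Linearity and the uniform bound allow passage to the limit in the weak formulation that characterizes $U\tilde{w}$ as weak solution of the PDE, and similarly in the quadratic identity \eqref{Append1}, so both statements persist for the extension.

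The main obstacle is the energy identity step: because the coefficients $a_{i,j}$ are only bounded and measurable, the time-derivative of $u$ exists only as a distribution taking values in $H^{-1}$, and the chain rule $\tfrac{d}{dt}\tfrac{1}{2}\|u_t\|^2 = \langle \partial_t u_t, u_t\rangle$ has to be justified carefully via Steklov averaging or the Lions-Magenes density theorem. Once this chain rule is in hand, the remainder is routine manipulation with ellipticity, Cauchy-Schwarz and Young's inequality.
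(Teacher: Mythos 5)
Your proposal is correct and follows the same skeleton as the paper --- treat smooth $\tilde w$ first, obtain the energy identity, apply ellipticity and Young's inequality, and extend by density using \eqref{Append2} --- but it reaches the energy identity by a genuinely different route. The paper does not argue the smooth case from scratch: it applies Theorem \ref{MainTheo} and the It\^o formula of Proposition \ref{Ito2} with $w'=\sum_i\partial_i\tilde w_{i}\in L^2(\cO)$ and $\xi=w''=w=0$, so that \eqref{Append1} falls out of \eqref{Ito-varphi} after one spatial integration by parts; the weight of the argument is thus carried by the $G^n\to G$ approximation machinery built for the stochastic equation. You instead observe that the equation in Proposition \ref{opU} is purely deterministic and invoke classical variational parabolic theory: unique weak solvability with an $H^{-1}(\cO)$-valued divergence source and the Lions--Magenes chain rule $\frac{d}{dt}\frac12\|u_t\|^2=\langle\partial_t u_t,u_t\rangle$ in the Gelfand triple. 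This is more elementary and self-contained, and it sidesteps the slightly awkward ordering in the paper, where Proposition \ref{opU} is quoted inside the proof of Theorem \ref{MainTheo} while its own proof cites that theorem. What each approach buys: the paper gets the smooth case for free from results it must prove anyway, while you get a proof that does not depend on the stochastic construction at all. The one step you pass over too quickly is the identification of the Green-kernel formula with the variational solution when $a$ is merely bounded measurable and $\partial\cO$ is irregular; the invoked ``Duhamel's principle'' is precisely Aronson's representation theorem for weak fundamental solutions and should be cited as such (or replaced by the paper's $G^n$ approximation). With that reference supplied the argument is complete; the only other remark is cosmetic, namely that bounding the supremum in $t$ and the gradient term separately yields $C_\lambda=2/\lambda$ rather than $1/\lambda$, which does not affect the statement.
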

\begin{proof} See Subsection \ref{proofopU} in the Appendix.
\end{proof}

\subsection{The linear case}\label{linearcase}
Let  $\xi\in L^{2}(\Omega ,\mathcal{F}_{0},P;L^2 (\cO )),
\,w=(w_i )_{i\in\Ne}\in \PP^{\Ne}, w' \in \PP, w''\in \PP^d  .$ We assume that
$$ | w|=\left({ \sum_{i=1}^{+\infty} |w_i^2 |}\right)^{1/2}\in\PP.$$We set
\begin{equation}\begin{split}
u_t (\cdot )=&\int_{\cO}G(t,\cdot,0,y)\xi(y)\, dy
+\int_0^t\int_{\cO} G(t,\cdot,s,y)w'_s (y)dy
ds \\
&\ \ +\sum_{i=1}^{d}\int_0^t\int_{\cO}
G(t,\cdot,s,y)\partial_{i}w''_i (y)dy ds
\\&\ \ +\sum_{i=1}^{+\infty} \int_0^t\int_{\cO} G(t,\cdot,s,y)w_{i,s} (y) dB^i_s.\\
\end{split}
\end{equation}
The goal of this section is to prove that $u$ is the unique solution of the {\it linear} equation
\begin{equation}{\label{linear0}}
\begin{split}
  du_t (x)  =
\Big( \sum_{i=1}^d \partial_i &\big[\sum_{j=1}^d a_{i,j}(t,x)\partial_j u_t (x)
+w''_{i,s} (x) \big]+w'_s (x)
\Big) dt + \sum_{i=1}^{+\infty}w_{i,s}(x)\, dB^i_t,\\
\end{split}
\end{equation}  with initial condition $u_0=\xi$ and zero Dirichlet condition on the boundary:
$$u(t,x)=0 \ \ \forall (t,x)\in (0,+\infty)\times\partial\cO.$$ To this end we proceed as follows: first we prove the result in the case where all the coefficients are regular and then, using an approximation procedure, we prove it in the general case. This second part is quite long and we shall split the proof in several steps.
\subsubsection{The regular case}
We assume first that all the coefficients are regular and that $\partial \cO $ is smooth. In this case, existence and uniqueness are well known (see for example \cite{KR}), nevertheless we give the proof 
in order to explicit the estimates we  need to pass to the limit in the general case.
\begin{Pro}
\label{Ito1} Assume that $\partial \mathcal{O}$ is smooth,  all the coefficients $ a_{i,j}$ belong to $ C^{\infty} \left( \R_+ \times  \overline{\mathcal{O}} \right)$, $\xi\in C^{\infty}_c (\cO )$,
$\,w, w' \in \left(L^2 (\Omega )\otimes C_c ([0,+\infty
))\otimes C_c^{\infty}({\cO})\right)\bigcap \PP$  and \\  $w''\in \left(L^2
(\Omega )\otimes C_c ([0,+\infty ))\otimes
C_c^{\infty}({\cO})\right)^d \bigcap \PP^d  .$ We set
\begin{equation}\begin{split}
u_t (\cdot )=&\int_{\cO}G(t,\cdot,0,y)\xi(y)\, dy
+\int_0^t\int_{\cO} G(t,\cdot,s,y)w'_s (y)dy
ds \\
&\ \ +\sum_{i=1}^{d}\int_0^t\int_{\cO}
G(t,\cdot,s,y)\partial_{i,y}w''_i (y)dy ds
\\&\ \  +\sum_{i=1}^{+\infty} \int_0^t\int_{\cO} G(t,\cdot,s,y)w_{i,s} (y) dB^i_s\, .\\
\end{split}
\end{equation}

Then $u$ has a version in $\hat{F} $ and is the unique  solution in the weak sense of $\eqref{linear0}$ in $\cH$ i.e. the unique element in $\cH$ such that  for each $\varphi \in
\mathcal{D}$,   the following relation holds 
almost surely:
\begin{equation}
\label{weak0}
\begin{split}
\left( \xi ,\varphi _{0}\right)+\int_{0}^{\infty }[\left(
u_{s},\partial _{s}\varphi \right) - \sum_{i,j=1}^d \int_{\cO}
a_{i,j}(s,x)\partial_i u_{s}(x)\partial_j \varphi _{s}(x)dx - \left(
w'_s ,\varphi _{s}\right)]ds
\\+\int_0^{+\infty}\sum_{i=1}^d \left( w''_{i,s} ,\partial_i \varphi _{s}\right) ds +\;
\sum_{i=1}^{+\infty} \int_{0}^{\infty }\left( w_{i,s}, \varphi
_{s}\right) dB^i_s =0.\\
\end{split}
\end{equation}
Moreover, we have the following estimates
for all $t\geq 0$:
\begin{equation}
\label{Ito0}
\begin{split}
& \left\| u_{t}\right\| ^{2}+2\int_{0}^{t}\sum_{i,j}\int_{\cO}a_{i,j}
(s,x)\partial_i u_s (x)\partial_j u_s (x)dxds=\left\| \xi \right\|
^{2}+2 \sum_i^{+\infty} \int_0^t (w_{i,s}, u_s ) \,dB^i_s\\\
&
+2\int_{0}^{t}\left( u_{s},w_{s}^{\prime }\right) ds-2\sum_i
\int_{0}^{t}\left( \partial_i u_s ,w''_{i,s}\right)
ds+\int_0^t\| |w_s |\|^2 \, ds,\\
\end{split}
\end{equation}

and
\begin{equation}
\label{priori:estimate}
E[\left\| u\right\| _{T}^{2}]\leq cE\left[ \left\| \xi\right\|
^{2}+\int_{0}^{T}\left( \left\| | w_{t}|\right\|^{2}+\left\|
w_{t}^{\prime }\right\| ^{2}+\sum_i \left\| w_{i,t}^{\prime \prime
}\right\| ^{2}\right) dt\right]
\end{equation}

where $c$ is a constant which only depends on $T.$

\end{Pro}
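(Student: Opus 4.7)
The plan is to exploit the regularity hypotheses fully so that no approximation is needed at this stage: under these assumptions the Green function $G$ and its derivatives are classical (this is the smooth Aronson regime), the data $\xi, w, w', w''$ are compactly supported and smooth in $x$, and so each term in the mild formula can be manipulated as a genuine $H^1_0(\cO)$-valued process.

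First, I would verify that the process $u$ defined by the mild formula admits a version in $\hat F$. The first three (Lebesgue-integrated) terms are $L^2(\cO)$-continuous in $t$ by standard parabolic regularity combined with the Gaussian bound \eqref{Gaussestimate}; the divergence term requires Proposition~\ref{opU}, which directly provides an $\hat F$-valued representation and estimate \eqref{Append2}. For the stochastic convolution $\sum_i \int_0^t\!\int_\cO G(t,\cdot,s,y) w_{i,s}(y)\,dB^i_s$, $L^2(\cO)$-continuity follows from a factorisation/BDG argument in the $L^2(\cO)$-valued setting, using $\sum_i \|w_{i,s}\|^2 \in L^1$ together with the Gaussian bound to control moments. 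Gradient regularity of each term follows similarly from the $H^1_0$-estimates on $G(t,\cdot,s,y)$.

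Second, to show that $u$ satisfies the weak formulation \eqref{weak0}, I would test each of the four terms against $\varphi \in \cD$ and use Fubini (justified pathwise by \eqref{Gaussestimate} and the compact support of $w, w', w''$, $\xi$). After an integration by parts in $x$, the identity reduces to the fact that $G(t,x,s,y)$ is the fundamental solution of \eqref{weak:fundamental}, i.e.\ $\partial_t G - \sum_{i,j}\partial_i(a_{ij}\partial_j G)=0$ with zero Dirichlet data. A stochastic Fubini theorem handles the noise term in the same way.

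Third, for the Itô formula \eqref{Ito0}, since $u \in \hat F$ and in the regular setting $u$ is smooth enough to serve as its own test function, I would apply the classical Itô formula for $\|u_t\|^2$ in the Hilbert space $L^2(\cO)$. The drift produces, after integration by parts, the symmetric bilinear form $2\sum_{i,j}\int a_{ij}\partial_i u \,\partial_j u$ and the cross terms $2(u_s,w'_s)$ and $-2\sum_i(\partial_i u_s, w''_{i,s})$; the quadratic variation of the noise contributes $\int_0^t \||w_s|\|^2\, ds$. The martingale part is $2\sum_i\int_0^t(w_{i,s},u_s)\,dB^i_s$.

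Finally, for the a priori estimate \eqref{priori:estimate}, I would take expectation in \eqref{Ito0}, apply the ellipticity \eqref{ellip} to bound $\sum_{i,j}\int a_{ij}\partial_i u\,\partial_j u \geq \lambda\|\nabla u\|^2$, then use Young's inequality $2|(\partial_i u_s, w''_{i,s})|\leq \eta\|\nabla u_s\|^2 + \eta^{-1}\|w''_{i,s}\|^2$ with $\eta<\lambda$ to absorb the gradient term, together with $2|(u_s,w'_s)|\leq \|u_s\|^2 + \|w'_s\|^2$. Burkholder--Davis--Gundy on $\sum_i\int_0^t(w_{i,s},u_s)\,dB^i_s$ produces a factor $\tfrac{1}{2}E\sup_{s\leq T}\|u_s\|^2$ that can be absorbed on the left-hand side. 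A Gronwall argument then yields \eqref{priori:estimate} with constant depending only on $T$ (and $\lambda$). Uniqueness in $\cH$ is immediate: the difference of two weak solutions with identical data is a weak solution with $\xi=0$, $w=w'=w''=0$, and applying \eqref{priori:estimate} to it forces it to vanish. The main obstacle is the second step --- proving that the stochastic convolution against $G$ yields a genuine weak solution that lies in $\hat F$, as this combines the Gaussian bound, a stochastic Fubini, and the weak identification via Proposition~\ref{opU} for the divergence term; once these are in hand, the remaining arguments are standard energy estimates.
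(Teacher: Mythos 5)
Your overall strategy coincides with the paper's: exploit the classical regularity of $G$ in the smooth setting to view $u$ as an $H_0^2(\cO)$-valued semimartingale, apply the Hilbert-space It\^o formula to $\|u_t\|^2$ to get \eqref{Ito0}, and then run the standard energy estimate (ellipticity, Young, Burkholder--Davis--Gundy, absorption of the supremum) to obtain \eqref{priori:estimate}. Two points, however, deserve correction. First, your treatment of the divergence term via Proposition \ref{opU} is both unnecessary and logically circular here: in the Appendix, Proposition \ref{opU} is itself proved by invoking Theorem \ref{MainTheo} and Proposition \ref{Ito2}, which are built on the present Proposition, so it cannot be used as an input at this stage. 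It is also not needed: under the standing hypotheses $w''$ is smooth with compact support, so $\partial_{i,y} w''_{i}$ is just another admissible source of the same type as $w'$, and the corresponding term in the mild formula is classically defined; the paper only needs Proposition \ref{opU} later, in the non-regular case, to give meaning to that term. Second, your uniqueness argument ``apply \eqref{priori:estimate} to the difference of two weak solutions'' is imprecise, since \eqref{priori:estimate} is derived for the process given by the mild formula, not for an arbitrary element of $\cH$ satisfying \eqref{weak0}; to make your route work you would need the energy identity for arbitrary weak solutions. The paper avoids this by observing that, the noise coefficients $w_i$ being independent of the solution, the stochastic integrals cancel in the difference $\zeta=u-v$, which therefore satisfies pathwise the deterministic weak parabolic equation with zero data, and standard deterministic uniqueness gives $\zeta=0$. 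With these two repairs your argument matches the paper's proof; the remaining steps (Fubini and stochastic Fubini to check \eqref{weak0}, BDG with the bound $\sum_i(w_{i,s},u_s)^2\leq \|u_s\|^2\,\||w_s|\|^2$, and absorption of $\varepsilon$-terms) are exactly those carried out in the text.
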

\begin{proof}
Following Aronson \cite{Aronson3}, we know that the weak fundamental
solution $G$ is a classical one. Moreover, it is well known that
$G$ is one time differentiable with respect to time and infinitely differentiable with respect to space
variables in $\Delta$ and that we have the following estimate for all
$(t,x,s,y)\in\Delta$ and $1\leq i,j\leq d$ (see
\cite{EidIva} for example):
\begin{eqnarray}{\label{estimDerivx}}
| \partial_{i,x}^l\partial_{j,y}^k G(t,x,s,y)|&\leq &C
(t-s)^{-\frac{d+l+k}{2}}\exp(-\varrho\frac{|x-y|^2}{(t-s)}),\\{\label{estimDerivt}}
| \partial_t G(t,x,s,y)|&\leq &C
(t-s)^{-\frac{d}{2}-1}\exp(-\varrho\frac{|x-y|^2}{(t-s)})
\end{eqnarray}
with $k,l=0$, $1$ or $2$ and where $\partial^l_{i,x}$ denotes the partial derivative of order $l$ with respect to the variable $x_i$.\\
%We first assume that $\xi\in C_c^{\infty} ({\cO})$, $w,w'\in \left(L^2 (\Omega
%)\otimes C_c ([0,+\infty ))\otimes
%C_c^{\infty}({\cO})\right)\bigcap \PP$,   $w''\in \left(L^2 (\Omega )\otimes C_c ([0,+\infty
%))\otimes C_c^{\infty}({\cO})\right)^d \bigcap \PP^d $ . \\
 Due to the regularity of $G$ and of all the coefficients in the expression of $u$, one can use
 the fact that $G$ is a strong solution. As a consequence, $u  $ is a $H_0^2 (\cO )$-valued semi-martingale with $L^2 (\cO )$-continuous trajectories   (see \cite{KR}, Chapter 1 or \cite{DZ}) and we have the following integral representation:
 \begin{equation}{\label{SemMartin}}\begin{split}
 u_t (x)=&\xi(x)+\sum_{i,j=1}^d \int_0^t \partial_i \left( a_{i,j}(s,x)\partial_j u_s (x)\right) ds+\int_0^tw'_s (x)ds
 +\sum_{i=1}^{d}\int_0^t \partial_{i,y}
w''_{i,s} (x) ds
\\&\ \ +\sum_{i=1}^{+\infty}\int_0^t w_{i,s} (x) dB^i_s.\\
\end{split}
\end{equation}
Applying the It\^{o}'s
formula for Hilbert-valued semimartingaled (see \cite{KR} Chapter 1, Section 3) and then integrating with respect to $x$, we get
\begin{equation}\label{repre1}\begin{split}
\parallel u_t\|^2 &+2\int_0^t \int_{\cO} a_{i,j} (s,x)\partial_i u_s(x)\partial_j u_s(x)dxds=
\| \xi\|^2 +2\int_0^t \left( w'_s,
u_s\right)ds\\&+2\sum_i \int_0^t\left(
w''_{i,s} ,\partial_i v^{n,m}_s\right)ds
 +2\sum_i \int_0^t \left( w_{i,s}  ,u_s \right)dB^i_s\\&
+\int_0^t \| |w_s |\parallel^2
ds.
\end{split}\end{equation}
Fix $\varepsilon
>0$ small. We have for all $t\in [0,T)$:
\begin{eqnarray*}
2\left| \int_0^t \left( w'_s,
u_s\right)ds\right| &\leq& \varepsilon \int_0^T \| u_s
\|^2 ds +\frac{1}{\varepsilon} \int_0^T \| w'_s
\|^2 ds
\end{eqnarray*}
and
\begin{eqnarray*}
2\left| \sum_i \int_0^t\left( w''_{i,s}
,\partial_i u_s\right)ds\right| &\leq& \varepsilon \int_0^T
 \| \nabla u_s \|^2 ds+\frac{1}{\varepsilon}
\int_0^T\| |w''_{s}|\|^2 ds.
\end{eqnarray*}
 Moreover, thanks to the Burkholder-Davies-Gundy
inequality, we get
\begin{eqnarray*}
\begin{split}
E[\sup_{t\in [0,T]}\left|\sum_i \int_0^t \left(
w_{i,s}  ,u_s\right)dB^i_s\right|]& \ \  \leq
c_1 E\left[\left( \int_{0}^{T}\sum_{i=1}^{+\infty}
\left( w_{i,s} ,u_s\right)^2
ds\right)^{1/2}\right]\\& \ \ \leq c_1 E\left[\left(
\int_{0}^{T}\sum_{i=1}^{+\infty} \sup_{t\in [0,T]}\left\|
u_t \right\|^2 \| w_{i,s} \|^2 dt
\right)^{1/2}\right]\\& \  \ \ \leq c_1 E\left[\sup_{t\in
[0,T]}\left\| u_t\right\|
\left(\int_{0}^{T}\| |w_s |
\|^2dt \right)^{1/2}\right]\\& \ \ \  \leq \eps
E\left[\sup_{t\in [0,T]}\left\| u_t\right\|^2\right]
+\frac{c_1 }{4\eps}
E\left[\int_{0}^{T}\| |w_s |\|^2dt \right].
\end{split}
\end{eqnarray*}
Then using the ellipticity assumption on $a$ and the inequalities
above, by taking   the supremun in $t\in [0,T]$ in relation
\eqref{repre1} and then the expectation, we get:
\begin{equation}\label{repre}\begin{split}
&(1-2\eps (T+1))E[\sup_{t\in [0,T]}\parallel
u_t\|^2]+(2\lambda -\eps)E\int_0^t \|\nabla
u_s\parallel^2 ds\leq  2\| \xi \|^2
\\&+\frac{2}{\varepsilon} E\int_0^T \| w'_s \|^2
ds+\frac{2}{\varepsilon} E\int_0^T\|
|w''_{s}|\|^2 ds+\frac{c_1
}{2\eps} E\left[\int_{0}^{T}\| |w_s
|\|^2dt \right].
\end{split}\end{equation}

Taking $\eps$ small enough, we deduce   that we have the following a
priori estimate:
\begin{equation}
\label{estimateup} E\left\| u\right\| _{T}^{2}\leq cE\left(
\left\| \xi\right\| ^{2}+\int_{0}^{T}\left( \left\|
|w_{t}|\right\|^{2}+\left\| w_{t}^{\prime }\right\| ^{2}+\sum_i
\left\| w_{i,t}^{\prime \prime }\right\| ^{2}\right) dt\right)
\end{equation}
where $c$ is a constant which only depends on $T$ and $\lambda$
 but not on $\cO$. This proves inequality $\eqref{priori:estimate}$.\\
Relation $\eqref{weak0}$ and the fact that $u$ is a weak solution are direct consequences of  It\^{o}'s formula.\\
Finally, uniqueness is clear, indeed if $v$ is another element in
$\HH\bigcap L_{loc}^2 (\R_+; L^2 (\Omega;H_0^1 (\cO) $ which
satisfies (\ref{weak0}) for all $\vphi\in C_c^{\infty}
([0,+\infty[)\otimes C_c^{\infty} (\cO )$, then $\zeta=u-v$
satisfies
\begin{equation}
\int_{0}^{\infty }[\left(\zeta_{s},\partial _{s}\varphi \right) -
\sum_{i,j}\int_{\cO} a_{i,j}(s,x)\partial_i \zeta_{s}(x)\partial_j
\varphi _{s}(x)dx ]ds =0,
\end{equation}
standard results on deterministic PDE's ensure that $\zeta=0$.

\end{proof}
\subsubsection{The general case}
Here, we only assume that $a$ is measurable and satisfies assumption $\eqref{ellip}$, that $\cO$ is a bounded open domain without any condition on its boundary and we are given coefficients: $\xi\in L^{2}(\Omega ,\mathcal{F}_{0},P;L^2 (\cO )),
\,  w' \in \PP, \, w''=(w''_1 ,\cdots ,w''_d )\in \PP^d $ and $w=(w_i )_{i\in\Ne}\in \PP^{\Ne},$ such that $E[\int_0^T \| |w_s |\|^2 \, ds]<+\infty$. \\
We first prove that Proposition \ref{Ito1} remains true in this case and then we establish  It\^{o}'s formula for the solution. To do that, we approximate the coefficients, the domain and the second order operator in the following way:
\begin{enumerate}
\item We mollify coefficients $a_{i,j}$ and so consider
sequences $(a^n_{i,j})_n$ of $C^{\infty}$ functions such that for
all $n\in\Ne$, the matrix $a^n$ satisfies the same ellipticity and
boundedness assumptions as $a$ and
\[ \forall 1\leq i,j\leq d ,\ \lim_{n\rightarrow +\infty}
a^n_{i,j}= a_{i,j}\   a.e.\]
\item We  approximate $\cO$ by  an increasing sequence of smooth domains $(\cO^n )_{n\geq 1}$.
\item We consider  a sequence $(\xi^n )$ in $C_c^{\infty}
({\cO})$ which converges to $\xi$ in $L^2 (\cO )$ and such that for all $n$, ${\rm supp\,} \xi^n \subset \cO^n$.
\item For each $i\in\Ne$, we construct a sequence $(w^n_i
)$  in $\left(L^2 (\Omega )\otimes C_c ([0,+\infty
))\otimes C_c^{\infty}({\cO})\right)\bigcap \PP$ which converges in
$L^2_{loc} (\R_+ ;L^2 (\Omega \times \cO))$ to $w_i$ such that for all $n$, ${\rm supp\,} w^n_i \subset \cO^n$ and
$$\forall t\geq 0 ,\ E[ \int_0^t \| w^n_{i,s}\|^2 \, ds] \leq E[ \int_0^t \| w_{i,s}\|^2 \, ds],$$
so that
$$E[\int_0^t \| |w^n_s|\|^2 ds ]\leq E[\int_0^t \| |w_s|\|^2 ds ]<+\infty.$$
\item We consider a sequence  $(w'^{,n})$  in $\left(L^2 (\Omega )\otimes C_c ([0,+\infty
))\otimes C_c^{\infty}({\cO})\right)\bigcap \PP$ which converges in
$L^2_{loc} (\R_+ ;L^2 (\Omega \times \cO))$ to  $w'$
and such that for all $n$, ${\rm supp \,} w'^{,n} \subset \cO^n$.
\item Finally,  let $(w''^{,n})$ be a sequence in $\left(L^2
(\Omega )\otimes C_c ([0,+\infty ))\otimes
C_c^{\infty}({\cO})\right)^d \bigcap \PP^d $ which converges in
$L^2_{loc} (\R_+ ;L^2 (\Omega \times \cO)^d)$ to $w''$and such that for all $n$, ${\rm supp\,} w''^{,n} \subset \cO^n$ .
\end{enumerate}
For all
$n\in\Ne$, we put $\Delta^n  =\{(t,x,s,y)\in\R_+ \times \cO^n\times \R_+ \times \cO^n; t> s\}.$ We denote  by $G^n :\Delta^n \mapsto \R_+$ the
  weak fundamental solution of the problem \eqref{weak:fundamental} associated to $a^n$ and
  $\cO^n$:
\begin{equation}
\partial_t G^n (t,x;s,y)  -
\sum_{i,j=1}^d \partial_i a^n_{i,j}(t,x)\partial_j G^n (t,x;s,y) =0
\end{equation}
with  Dirichlet boundary condition $ G^n(t,x,s,y) = 0 , \quad
\mbox{for all} \; (t,x) \in \; (s,\, +\infty ) \times \partial
\cO^n
\,$.\\
In a natural way we extend $G^n$ on $\Delta$ by setting:
$G^n\equiv0$ on $\Delta \setminus \Delta^n$.\\
We define the process $u^n$ by setting for all
$(t,x)\in \R_+ \times \cO$:
\begin{equation}
\label{approximation}
\begin{split}
u_t^n (x)=&\int_{\cO}G^n (t,x,0,y)\xi^n(y) dy+\int_0^t\int_{\cO} G^n (t,x,s,y)w'^{,n}_s (y)dy
ds \\
&\ \ -\sum_{i=1}^{d}\int_0^t\int_{\cO} \partial_{i,y}
G^n (t,x,s,y)w''^{,n}_{i,s} (y)dy ds
\\&\ \ +\sum_{i=1}^{+\infty}\int_0^t\int_{\cO} G^n (t,x,s,y)w^n_{i,s} (y) \, dy dB^i_s \, .\\
\end{split}
\end{equation}
The key Lemma is the following:
\begin{Le}{\label{convGreen}}
 There exists a subsequence of $(G^n )_{n\geq 1}$ which converges everywhere to $G$  on $\Delta $, where $G$ still denotes the fundamental solution of  \eqref{weak:fundamental}.
\end{Le}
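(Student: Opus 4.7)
The approach is to derive uniform pointwise bounds and equicontinuity for the sequence $(G^n)$, extract a locally uniformly convergent subsequence by Arzelà--Ascoli, and then identify the limit via uniqueness of the weak fundamental solution. Since each $a^n$ satisfies the ellipticity bounds \eqref{ellip} with the same constants $\lambda, \Lambda$, and each $\cO^n \subset \cO$ has Lebesgue measure bounded by $|\cO|$, Aronson's Gaussian estimate \eqref{Gaussestimate} applies uniformly in $n$: there exist constants $C, \varrho > 0$ depending only on $T$, $\lambda$, $\Lambda$ and $|\cO|$ such that for all $(t,x,s,y) \in \Delta$ with $t \leq T$,
\begin{equation*}
0 \leq G^n(t,x,s,y) \leq C (t-s)^{-d/2} \exp\!\Big( -\varrho \tfrac{|x-y|^2}{8(t-s)} \Big).
\end{equation*}

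Next I invoke the Hölder regularity of weak solutions to parabolic equations with merely measurable coefficients (De Giorgi--Nash--Moser theory, as exploited by Aronson in \cite{Aronson3}): on any compact subset $K \subset \Delta$, the family $(G^n)$ is uniformly Hölder continuous, with exponent and constant depending only on $K$ and the structure constants. Combining this equicontinuity with the uniform bound above, Arzelà--Ascoli together with a diagonal extraction over an exhaustion of $\Delta$ by compacts yields a subsequence, still denoted $(G^n)$, converging locally uniformly---and hence everywhere---on $\Delta$ to some limit $\tilde G$.

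It remains to identify $\tilde G$ with $G$. Fix $(s,y)$ and view $v^n := G^n(\cdot,\cdot,s,y)$ as the weak solution on $(s,\infty) \times \cO^n$ (extended by zero outside $\cO^n$) of the parabolic equation associated to $a^n$. Caccioppoli-type energy estimates---derived by testing the weak formulation against $v^n$ itself cut off away from the pole---together with the uniform Gaussian bound provide uniform $H^1_{loc}((s,\infty) \times \cO)$ bounds, so along a further subsequence $\nabla_x v^n \rightharpoonup \nabla_x \tilde v$ weakly in $L^2_{loc}$, where $\tilde v := \tilde G(\cdot,\cdot,s,y)$. Using the a.e.\ convergence $a^n \to a$ together with the uniform $L^\infty$ bound $|a^n|\leq M$, one passes to the limit in the weak formulation satisfied by $v^n$: for any test function $\varphi \in C_c^\infty((s,\infty)) \otimes C_c^\infty(\cO)$, eventually $\mathrm{supp}\,\varphi \subset \cO^n$, and dominated convergence combined with weak-strong convergence gives
\begin{equation*}
\int_s^\infty (\tilde v, \partial_t \varphi)\, dt - \sum_{i,j} \int_s^\infty \!\!\int_{\cO} a_{i,j}(t,x)\, \partial_j \tilde v\, \partial_i \varphi \, dx\, dt = -\varphi(s,y).
\end{equation*}
Furthermore, since each $v^n(t,\cdot)$ lies in $H^1_0(\cO^n) \subset H^1_0(\cO)$ (after zero extension) with uniform $H^1_{loc}$ bounds, the weak $H^1$-limit $\tilde v(t,\cdot)$ inherits the zero Dirichlet condition on $\partial \cO$. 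Uniqueness of the weak fundamental solution (\cite{Aronson3}) then forces $\tilde G = G$; since the limit is independent of the extraction, the full extracted subsequence converges to $G$ everywhere on $\Delta$.

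The main obstacle is the identification step: passing to the limit in the bilinear form with merely $L^\infty$ coefficients $a^n \to a$ a.e., while simultaneously handling the moving domain $\cO^n \uparrow \cO$ (which is not assumed smooth) and verifying the zero boundary condition on $\partial \cO$ for the limit. The uniform Hölder regularity and the uniform Gaussian control are what make both the compactness and the identification robust to the low regularity of $a$ and $\partial \cO$.
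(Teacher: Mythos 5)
Your proof takes essentially the same route as the paper: the uniform Gaussian bound \eqref{Gaussestimate} (valid uniformly in $n$ since the structure constants and $|\cO|$ are unchanged), interior equicontinuity on compacts of $\Delta$ (the paper cites Aronson's Theorem C, which is precisely the De Giorgi--Nash--Moser H\"older estimate you invoke), then Arzel\`a--Ascoli and a diagonal extraction over an exhaustion by compacts. Where you go beyond the paper is the identification of the limit with $G$, which the paper merely asserts; your strategy there (uniform energy bounds away from the pole, weak $L^2_{loc}$ convergence of $\nabla v^n$ paired with the strong convergence of $a^n\partial_i\varphi$, then Aronson's uniqueness) is the right one, but note one inconsistency: for test functions in $C_c^{\infty}((s,\infty))\otimes C_c^{\infty}(\cO)$ the right-hand side of your limiting weak formulation is $0$, not $-\varphi(s,y)$, so as written you only show that $\tilde G(\cdot,\cdot,s,y)$ solves the homogeneous equation away from $t=s$; to invoke uniqueness you must also pass to the limit in the initial condition $\lim_{t\downarrow s}\int_{\cO}G^n(t,x,s,y)\varphi(x)\,dx=\varphi(y)$ uniformly in $n$, and the Dirichlet condition for the limit requires global (not merely local) $H^1(\cO)$ bounds for $t$ bounded away from $s$, which the energy estimate does supply. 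These are repairable details rather than a flaw in the approach.
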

\begin{proof} Let $K$ be a compact subset of $\Delta$. There exists $\varepsilon >0$, $\eta >0$ such that  for $ (t,x) \in K$,  $ |t-s| \geq \eta$, $ d (x, \partial \mathcal{O}_n) \geq \varepsilon $ and $ d (y, \partial \mathcal{O}_n) \geq \varepsilon $, for $n$ large enough. Then using Theorem C in Aronson (\cite{Aronson3} p.616) we know that the sequence of functions $(G^n )_n$ is equicontinuous  on $K$. Moreover thanks to the  Gaussian estimates  \eqref{Gaussestimate} and Ascoli theorem, we have that $(G^n )_n$ converges uniformly to $G$ on $K$, for some subsequence. We conclude by taking an exhaustive sequence of compact subsets in $\Delta$ and a diagonalisation procedure.
\end{proof}
For simplicity, from now on we assume that the sequence $(G^n )$ is chosen such that it converges  to $G$ on $\Delta$.
\begin{Th}{\label{MainTheo} } Assume that the general hypotheses of Subsection \ref{hypo} hold. \\Let $\xi\in L^{2}(\Omega ,\mathcal{F}_{0},P;L^2 (\cO )),
\,  w' \in \PP, \, w''=(w''_1 ,\cdots ,w''_d )\in \PP^d $ and $w=(w_i )_{i\in\Ne}\in \PP^{\Ne},$ such that $E[\int_0^T \| |w_s |\|^2 \, ds]<+\infty$. We set
\begin{equation}\label{expressionofu}\begin{split}
u_t (\cdot )=&\int_{\cO}G(t,\cdot,0,y)\xi(y)\, dy
+\int_0^t\int_{\cO} G(t,\cdot,s,y)w'_s (y)\, dy
ds \\
&\ \ +\sum_{i=1}^{d}\int_0^t\int_{\cO}
G(t,\cdot,s,y)\partial_{i}w''_i (y)\, dy ds
\\&\ \ +\sum_{i=1}^{+\infty}\int_0^t\int_{\cO} G(t,\cdot,s,y)w_{i,s} (y) \, dy dB^i_s,\\
\end{split}
\end{equation}
then all the results of Proposition \ref{Ito1}  remain valid.\\
\end{Th}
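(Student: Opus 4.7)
The plan is to approximate $(a,\mathcal{O},\xi,w,w',w'')$ by the smooth/compactly supported sequences $(a^n,\mathcal{O}^n,\xi^n,w^n,w'^{,n},w''^{,n})$ constructed in the six-item list above, define $u^n$ by formula \eqref{approximation}, and then pass to the limit. For each fixed $n$, Proposition \ref{Ito1} applies on $\mathcal{O}^n$ (extending $u^n$ by $0$ outside $\mathcal{O}^n$, which is legitimate since the Green function $G^n$ is extended by $0$ there and the approximating data are supported in $\mathcal{O}^n$): thus $u^n$ lies in $\hat{F}$, is the unique weak solution in $\mathcal{H}(\mathcal{O}^n)$ of the linear equation driven by $(w^n,w'^{,n},w''^{,n})$ with initial condition $\xi^n$, and satisfies both the It\^o identity \eqref{Ito0} and the a priori estimate \eqref{priori:estimate} with a constant $c=c(T)$ independent of $n$.

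The second step is to exploit this uniform estimate. Since the right-hand side of \eqref{priori:estimate} is bounded uniformly in $n$ by our choice of approximating sequences, the family $(u^n)$ is bounded in $\mathcal{H}_T$, hence in $L^2(\Omega;L^\infty([0,T];L^2(\mathcal{O})))$ and in $L^2(\Omega\times[0,T];H^1_0(\mathcal{O}))$. After extraction of a subsequence, $u^n$ converges weakly-$*$ in the former space and weakly in the latter to some limit $\tilde{u}$. To identify $\tilde{u}$ with the process $u$ defined by \eqref{expressionofu}, I would use Lemma \ref{convGreen}: for fixed $(t,x)$ with $x\in\mathcal{O}$, the kernel $G^n(t,x,s,y)$ converges pointwise to $G(t,x,s,y)$ on $\Delta$, and by the Gaussian bound \eqref{Gaussestimate} (which is uniform in $n$ up to the constants depending only on $\lambda,\Lambda,|\mathcal{O}|$) we can apply dominated convergence to the three deterministic integrals. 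For the stochastic integral and the divergence term we use, respectively, It\^o's isometry plus dominated convergence and the continuous extension $U$ from Proposition \ref{opU} (the uniform $L^2$-bound \eqref{Append2} gives the required continuity). This yields $u^n_t(x)\to u_t(x)$ in $L^2(\Omega)$ and forces $\tilde{u}=u$.

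Next, each $u^n$ satisfies the weak formulation \eqref{weak0} with the approximating coefficients. Thanks to the weak convergence $u^n\rightharpoonup u$ in $L^2(\Omega\times[0,T];H^1_0(\mathcal{O}))$, the a.e.\ convergence $a^n_{i,j}\to a_{i,j}$ and their uniform $L^\infty$-bound, and the strong $L^2$-convergence of $(\xi^n, w'^{,n},w''^{,n},w^n)$ to $(\xi,w',w'',w)$, one passes to the limit term by term in \eqref{weak0} (the bilinear form $\int a^n\nabla u^n\cdot \nabla\varphi$ is handled by writing $a^n\nabla\varphi\to a\nabla\varphi$ strongly in $L^2$ and pairing with the weakly convergent $\nabla u^n$). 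This shows that $u$ is a weak solution of \eqref{linear0}. A version in $\hat F$ is obtained from the $L^2(\mathcal{O})$-continuity inherited through the Green-function representation \eqref{expressionofu}.

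For the It\^o energy identity \eqref{Ito0}, I would write it for $u^n$ and take the limit. The linear terms $(u^n,w'^{,n})$, $(\partial_i u^n,w''^{,n}_i)$ pass to the limit using weak--strong pairing, and the stochastic integral $\sum_i\int_0^t(w^n_{i,s},u^n_s)\,dB^i_s$ converges in $L^2(\Omega)$ by It\^o's isometry together with the convergences already established. The delicate point, which I expect to be the main obstacle, is the quadratic term $\int_0^t\int_\mathcal{O} a^n_{i,j}\partial_i u^n\partial_j u^n\,dx\,ds$: weak convergence of $\nabla u^n$ alone does not suffice, but one recovers it by rearranging the approximate energy identity and passing to $\liminf$, using lower semicontinuity on the left-hand side $\|u^n_t\|^2$ and convergence of all other terms; the resulting inequality, combined with the weak formulation already established for $u$ (which by an independent application of It\^o to the limit equation yields the reverse inequality), gives the equality and, as a byproduct, the strong convergence of $\nabla u^n$ in $L^2(\Omega\times[0,T]\times\mathcal{O})$. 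Finally, the a priori estimate \eqref{priori:estimate} passes to the limit by lower semicontinuity of the norm, and uniqueness is proved exactly as in the smooth case: the difference $\zeta$ of two solutions solves a homogeneous deterministic weak PDE in $H^1_0(\mathcal{O})$ with zero initial condition, which forces $\zeta=0$ by standard energy arguments.
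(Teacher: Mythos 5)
Your overall strategy (approximate, use the uniform bound \eqref{priori:estimate}, extract a weak limit, identify it with \eqref{expressionofu} via Lemma \ref{convGreen}) matches the paper's, but there is a genuine gap at the step where you recover the energy identity \eqref{Ito0} and, more generally, upgrade weak convergence to something strong enough to transfer the conclusions of Proposition \ref{Ito1}. Your plan is to get one inequality by lower semicontinuity from the approximate identities and the reverse inequality by ``an independent application of It\^o to the limit equation.'' But the validity of It\^o's formula for the limit equation is precisely one of the ``results of Proposition \ref{Ito1}'' that the theorem is asserting in the non-smooth setting: $u$ is not an $H^2_0(\cO)$-valued semimartingale here (the drift only lives in $H^{-1}$ with merely measurable $a$), so the classical Hilbert-space It\^o formula used in the regular case is unavailable, and invoking it for the limit is circular. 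One could escape by importing the Krylov--Rozovskii variational It\^o formula wholesale, but then the whole liminf/Minty machinery becomes redundant and the argument is no longer the self-contained one the approximation scheme is built for. The paper avoids this entirely: it applies Mazur's lemma to the weakly convergent subsequences of $(u^{n_k})$ \emph{and} of $(\sum_j a^{n_k}_{i,j}\partial_j u^{n_k})$ simultaneously, producing convex combinations $\tu^n=\sum_k\alpha^n_k u^{n_k}$ whose associated fluxes $\tA^n_i$ converge strongly in $L^2(\Omega\times[0,T]\times\cO)$ to $\sum_j a_{i,j}\partial_j\tu$; it then runs the energy estimate on the differences $v^{n,m}=\tu^n-\tu^m$ (which are still governed by equations of the smooth type, being convex combinations) and shows $(\tu^n)$ is Cauchy in $\hat F_T$, i.e.\ strongly convergent including in the $E\sup_t\|\cdot\|^2$ norm. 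All conclusions of Proposition \ref{Ito1} then pass to the limit directly.

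A secondary, related under-justification: weak convergence in $L^2(\Omega\times[0,T];H^1_0(\cO))$ gives you neither the $E\sup_{t\le T}\|u_t\|^2$ control nor the $L^2(\cO)$-continuity of trajectories needed for membership in $\hat F$; your appeal to ``continuity inherited through the Green-function representation'' assumes regularity of that representation in the non-smooth case, which is part of what must be proved (Proposition \ref{opU} covers only the divergence term). Both properties come for free once you have the strong $\hat F_T$-convergence of the convex combinations, since each $\tu^n$ is $L^2(\cO)$-continuous and $\hat F_T$ is a Banach space. Your treatment of the divergence term via the operators $U^n$ and the uniform bound \eqref{Append2} can be made to work (approximate $w''$ by smooth functions, integrate by parts, use dominated convergence), and is a reasonable alternative to the paper's identification of the limit $\tz$ through uniqueness of the weak solution of $d\tz_t=\sum_{i,j}\partial_i a_{i,j}\partial_j\tz_t+\sum_i\partial_i w''_i$; but the missing Mazur/Cauchy step is the essential idea you need to add.
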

\begin{proof} Let us first note that \eqref{expressionofu} is well defined thanks to Proposition \eqref{opU}. As $\cO^n$ and $a^n$ are smooth, hypotheses of the previous
subsection are fulfilled so that for all $n$, $u^n$ satisfies
Propositions \ref{Ito1} and Proposition \ref{Ito2}  with domain $\cO^n$, operator given
by  $a^n$ and coefficients $w^n$, $w'^{,n}$ and $w''^{,n}$ . Moreover, we know that for all $n\in\Ne$, the restriction of $u^n$ to $\cO^n$ belongs to $L^2_{loc}
(\R_+ ;H_0^1 (\cO^n ))$ and is a $H_0^2 (\cO^n )$-valued semimartingale hence as we put $u^n_t \equiv 0$ on $\Delta
\setminus \Delta^n$, $u^n$ belongs to $\cH$ and is a $H_0^2 (\cO  )$-valued semimartingale which admits the following decomposition:
 \begin{equation}{\label{SemMartin}}\begin{split}
 u_t^n (x)=&\xi^n(x)+\sum_{i,j}\int_0^t \partial_i \left( a^n_{i,j}(s,x)\partial_j u^n_s (x)\right) ds+\int_0^tw'^{,n}_s (x)ds
 -\sum_{i=1}^{d}\int_0^t \partial_{i}
w''^{,n}_{i,s} (x) ds
\\&\ \ +\sum_{i=1}^{+\infty}\int_0^t w^n_{i,s} (x)  dB^i_s.\\
\end{split}
\end{equation}
Let
us now pass to the limit in $\cH$.
For simplicity, we work on the finite time-interval $[0,T]$ and consider the Hilbert space
\[ F_T =L^2 (\Omega \times [0,T]; H_0^1 (\cO )),\]
equipped with the norm
\[\| u\|_{F_T }=
\left( \, E\int_0^T \left\| u_{t}\right\| ^{2}dt+\int_{0}^{T}E\,
\left\| \nabla u_t \right\|^2  dt\right) ^{1/2}\, .
\]
%We also consider $\hat{F}_T$, the subvector space of elements in $F_T$ which admits $L^2 (\cO)$-continuous trajectories equipped with the norm
%$\| \ \|_T $.\\
 Estimate \eqref{priori:estimate} ensures that the sequence $(u^n )$ is bounded in $F_T$, that for all $i\in \{ 1,\cdots ,d \}$, as the $a^n$ are uniformly bounded by $M$, the sequence $(\sum_{j=1}^d a^n_{i,j}\partial_j u^n )$ is bounded in $L^2 (\Omega\times [0,T]\times \cO )$. As a consequence,   we can extract (successively) a subsequence $(u^{n_k})_{k\geq 1}$ which converges weakly in $F_T$ to an element $\tu$ and such that for each $i$, the sequence $(\sum_{j=1}^d a^{n_k}_{i,j}\partial_j u^{n_k} )_k$ converges weakly in $L^2 (\Omega\times [0,T]\times \cO )$ to an element $v_i$. Since clearly $(\sum_{j=1}^d (a^{n_k}_{i,j}-a_{i,j})\partial_j u^{n_k} )_k$ converges to $0$ in $L^2 (\Omega\times [0,T]\times \cO )$ we conclude that $v_i= \sum_{j=1}^d a_{i,j}\partial_j \tu $. 
 Therefore, we can construct a sequence $(\tu^n )$ of convex combinations of elements in $(u^{n_k} )$ of the form
 \[ \tu^n =\sum_{k=1}^{N_n } \alpha^n_k u^{n_k}\]
 with $\lim_{n\rightarrow +\infty}N_n =+\infty$,  $\alpha^n_k \geq 0$, $\sum_{k=1}^{N_n } \alpha^n_k=1$ for all $n$ and
 such that:
 \begin{enumerate}
 \item $(\tu^n )$ converges strongly in $F_T$ to an element $\tu\in F_T$,
 \item $\forall i\in \{ 1, \cdots ,d\}$, $\tA^n_i =\sum_{j=1}^d \sum_{k=1}^{N_n} \alpha^n_k a_{i,j}^{n_k}\partial_j u^{n_k}$ converges to  $\sum_{j=1}^d a_{i,j} \partial_j \tu $  in $L^2 (\Omega\times[0,T]\times \cO )$ as $n$ goes to infinity.
 \end{enumerate}
  
 In a natural way, we set:
\[
 \txi^n =\sum_{k=1}^{N_n } \alpha^n_k \xi^{n_k},\, \tw'^{,n} = \sum_{k=1}^{N_n } \alpha^n_k w'^{,n_k},\, \tw''^{,n} = \sum_{k=1}^{N_n } \alpha^n_k w''^{,n_k}, \tw^n_i = \sum_{k=1}^{N_n } \alpha^n_k w^{n_k}_i \, \forall i\geq 1.\]
So $\tu^n$ admits the following representation for all $n\geq 1$:
  \begin{equation}{\label{SemMartin2}}\begin{split}
 \tu_t^n (x)=&\txi^n(x)+\sum_{i=1}^d\int_0^t \partial_i \tA^n_i (s,x) ds+\int_0^t \tw'^{,n}_s (x)ds
 -\sum_{i=1}^{d}\int_0^t \partial_{i,y}
\tw''^{,n}_{i,s} (x) ds
\\&\ \ +\sum_{i=1}^{+\infty}\int_0^t \tw_{i,s}^n (x)  dB^i_s\,.\\
\end{split}
\end{equation}
Let $n,m \in\Ne$, we set $v^{n,m}=\tu^n -\tu^m$. Applying It\^o's
formula and then integrating with respect to $x$, we get
\begin{equation}\label{repre}\begin{split}
\parallel v_t^{n,m}\|^2 =&\| \txi^n -\txi^m \|^2 -2\sum_i\int_0^t \int_{\cO} \left( \tA^n_i (s,x)-\tA^m_{i} (s,x)\right)\partial_i v^{n,m}_s(x)dxds
\\&+2\int_0^t \left( \tw'^{,n}_s-\tw'^{,m}_s,
v^{n,m}_s\right)ds+2\sum_i \int_0^t\left(
\tw''^{,n}_{i,s}-\tw''^{,m}_{i,s} ,\partial_i v^{n,m}_s\right)ds\\&
 +2\sum_i \int_0^t \left( \tw^n_{i,s} -\tw^m_{i,s} ,v^{n,m}_s\right)dB^i_s
+\sum_i \int_0^t \| \tw^n_{i,s}  -\tw^m_{i,s}  \parallel^2
ds.
\end{split}\end{equation}
Let $\vep >0$,
we have for almost all $t\in [0,T)$ and $x\in \cO$:
\begin{eqnarray*}
\sum_i \left( \tA^n_i (t,x)-\tA^m_{i} (t,x)\right)\partial_i v^{n,m}_t(x)&=&\sum_{i,j}a_{i,j}(t,x)\partial_j v^{n,m}_t(x) \partial_i v^{n,m}_t (x)\\
&&+\sum_i \left( \tA^n_i (t,x)-\sum_j a_{i,j} (t,x)\partial_j \tu^n_t (x)\right) \partial_i v^{n,m}_t (x)\\
&&-\sum_i \left( \tA^m_i (t,x)-\sum_j a_{i,j} (t,x)\partial_j \tu^m_t (x)\right) \partial_i v^{n,m}_t (x)
\end{eqnarray*}
this yields, thanks to the ellipticity asumption on the matrix $a$:
\begin{eqnarray*}\begin{split}
-2\sum_i\int_0^t \int_{\cO} &\left( \tA^n_i (s,x)-\tA^m_{i} (s,x)\right)\partial_i v^{n,m}_s(x)dxds\leq -2\lambda \int_0^t \|\nabla v^{n,m}_s\|^2 ds\\
&+ 2\sum_i \int_0^t\int_{\cO} | \tA^n_i (s,x)-\sum_j a_{i,j} (s,x)\partial_j \tu^n_s (x)| \left|\partial_i v^{n,m}_s (x)\right|\, ds dx\\
&\ \ +2\ \sum_i \int_0^t\int_{\cO}| \tA^m_i (s,x)-\sum_j a_{i,j} (s,x)\partial_j \tu^m_s (x)| \left|\partial_i v^{n,m}_s (x)\right|\, ds dx\, .\end{split}
\end{eqnarray*}
Using the trivial inequality $2ab \leq \vep a^2 +\disp\frac1\vep b^2$ we get
\begin{eqnarray*}
2\left| \int_0^t \left( \tw'^{,n}_s-\tw'^{,m}_s,
v^{n,m}_s\right)ds\right| &\leq& \varepsilon \int_0^T \| v^{n,m}_s
\|^2 ds +\frac{1}{\varepsilon} \int_0^T \| \tw'^{,n}_s-\tw'^{,m}_s
\|^2 ds
\end{eqnarray*}
and
\begin{eqnarray*}
2\left| \sum_i \int_0^t\left( \tw''^{,n}_{i,s}-\tw''^{,m}_{i,s}
,\partial_i v^{n,m}_s\right)ds\right| &\leq& \varepsilon \int_0^T
 \| \nabla v^{n,m}_s \|^2 ds+\frac{1}{\varepsilon}
\int_0^T\| |\tw''^{,n}_{s}-\tw''^{,m}_{s}|\|^2 ds \, .
\end{eqnarray*}
 Moreover, thanks to the Burkholder-Davies-Gundy, we obtain
\begin{eqnarray*}
\begin{split}
&E[\sup_{t\in [0,T]}\left|\sum_i \int_0^t \left(
\tw^n_{i,s} -\tw^m_{i,s} ,v^{n,m}_s\right)dB^i_s\right|]\\&\  \leq
c_1 E\left[\left( \int_{0}^{T}\sum_{i=1}^{+\infty}
\left( \tw^n_{i,s} -\tw^m_{i,s}  ,v^{n,m}_s\right)^2
ds\right)^{1/2}\right]\\& \ \ \leq c_1 E\left[\left(
\int_{0}^{T}\sum_{i=1}^{+\infty} \sup_{t\in [0,T]}\left\|
v^{n,m}_t \right\|^2 \| \tw^n_{i,s} -\tw^m_{i,s} \|^2 dt
\right)^{1/2}\right]\\& \  \ \ \leq c_1 E\left[\sup_{t\in
[0,T]}\left\| v^{n,m}_t\right\|
\left(\int_{0}^{T}\| |\tw^n_{s} -\tw^m_{s} |
\|^2dt \right)^{1/2}\right]\\& \ \ \ \ \leq \eps
E\left[\sup_{t\in [0,T]}\left\| v^{n,m}_t\right\|^2\right]
+\frac{c_1 }{4\eps}
E\left[\int_{0}^{T}\|| \tw^n_{s} -\tw^m_{s}| \|^2dt \right].
\end{split}
\end{eqnarray*}
Then using  the inequalities
above, by taking   the supremun in $t\in [0,T]$ in relation
\eqref{repre} and then the expectation, we get:
\begin{equation}\label{cauchy}\begin{split}
&(1-\eps (T+2))E[\sup_{t\in [0,T]}\parallel
v_t^{n,m}\|^2]+(2\lambda -\eps)E\int_0^T \|\nabla
v^{n,m}_s\parallel^2 ds\leq E[ \| \xi^n -\xi^m \|^2]
\\&+ 2\sum_i E\left[ \int_0^T \int_{\cO} | \tA^n_i (s,x)-\sum_j a_{i,j} (s,x)\partial_j \tu^n_s (x)| \left|\partial_i v^{n,m}_s (x)\right|\, ds dx\right]\\
&\ +2\ \sum_i E\left[\int_0^T \int_{\cO}| \tA^m_i (s,x)-\sum_j a_{i,j} (s,x)\partial_j \tu^m_s (x)| \left|\partial_i v^{n,m}_s (x)\right|\, ds dx\right]
\\&\ \ +\frac{1}{\varepsilon} E\left[\int_0^T \| \tw'^{,n}_s-\tw'^{,m}_s \|^2
ds\right] +\frac{1}{\varepsilon} E\left[\int_0^T\|
|\tw''^{,n}_{s}-\tw''^{,m}_{s}|\|^2 ds\right]\\&\ \ \ +\frac{c_1
}{4\eps}E\left[\int_{0}^{T}\| |\tw^n_s
 -\tw^m_s | \|^2ds \right].
\end{split}\end{equation}
Let us prove now that each term in the right member tends to $0$ as $n,m$ go to $+\infty$.\\
First of all, by construction of the approximating sequences $(\xi^n )_n$, $(w^n )_n$, $(w^{',n})_n$ and $(w^{'',n})_n$ given at the beginning of this step, we have
\[ \hspace{-0.5cm}\lim_{n,m\rightarrow +\infty} E[ \|\txi^n -\txi^m \|^2] = \lim_{n,m\rightarrow +\infty} E\left[\int_0^T \| \tw'^{,n}_s-\tw'^{,m}_s \|^2
ds\right]= \lim_{n,m\rightarrow +\infty}E\left[\int_0^T\|
|\tw''^{,n}_{s}-\tw''^{,m}_{s}|\|^2 ds\right] =0, \]
and
$$\lim_{n\rightarrow + \infty}E\left[ \int_0^T \| | \tw^n_s -w_s |\|^2 \, ds\right]=0.$$
Let $i\in \{1,\cdots ,d\}$. As $a$ is bounded, $\sum_j a_{i,j} \partial_j \tu^n$ tends to $\sum_j a_{i,j}\partial_j \tu$ and so  $ | \tA^n_i -\sum_j a_{i,j} \partial_j \tu^n_s (x)| $ tends to $0$ in $L^2 (\Omega\times [0,T]\times \cO)$ as $n$ goes to $+\infty$. As $(v^{n,m})_{n,m}$ is bounded in $L^2 (\Omega\times [0,T]\times \cO)$, we deduce from this that
\[\lim_{n,m\rightarrow +\infty} E\left[ \int_0^T \int_{\cO} | \tA^n_i (s,x)-\sum_j a_{i,j} (s,x)\partial_j \tu^n_s (x)| \left|\partial_i v^{n,m}_s (x)\right|\, ds dx\right]=0,\]
and in the same way
\[\lim_{n,m\rightarrow +\infty} E\left[ \int_0^T \int_{\cO} | \tA^m_i (s,x)-\sum_j a_{i,j} (s,x)\partial_j \tu^m_s (x)| \left|\partial_i v^{n,m}_s (x)\right|\, ds dx\right]=0.\]
Taking $\vep$ small enough in \eqref{cauchy}, we conclude that $(\tu^n )$ is a Cauchy sequence in $\hat{F}_T$ it is clear that its limit is $\tu$ so we have
\[ \lim_{n\rightarrow +\infty} \| \tu^n -\tu \|_T =0.\]
It remains to prove that $\tu=u$.\\
 We have for all $n$:
\begin{equation}{\label{approx2}}
\begin{split}
u_t^{n} (x)=&\int_{\cO}G^n (t,x,0,y)\xi^n(y) dy+\int_0^t\int_{\cO} G^n (t,x,s,y)w'^{,n}_s (y)dy
ds \\
&\ \ -\sum_{i=1}^{d}\int_0^t\int_{\cO} \partial_{i,y}
G^n (t,x,s,y)w''^{,n}_{i,s} (y)dy ds
\\&\ \ +\sum_{i=1}^{+\infty}\int_0^t\int_{\cO} G^n (t,x,s,y)w^n_{i,s} (y) dy dB^i_s .\\
\end{split}
\end{equation}
Thanks to Lemma \ref{convGreen} and the Gaussian estimates \eqref{Gaussestimate}, we deduce by the dominated convergence Theorem  that the first, second and fourth terms in the right member of \eqref{approx2} converge to the corresponding term in the expression \eqref{expressionofu} of $u$.  In  order to study the third one, we put for all $n$:
\[ z^n =-\sum_{i=1}^{d}\int_0^t\int_{\cO} \partial_{i,y}
G^n (t,x,s,y)w''^{,n}_{i,s} (y)dy ds=\sum_{i=1}^{d}\int_0^t\int_{\cO} G^n (t,x,s,y) \partial_{i,y}
w''^{,n}_{i,s} (y)dy ds ,\]
and
\[ z= \sum_{i=1}^{d}\int_0^t\int_{\cO} G (t,x,s,y) \partial_{i,y}
w''_{i,s} (y)dy ds.\]
By the same proof as above, we can prove that, at least for a subsequence, $(z^n )$ converges weakly in $L^2 (\Omega\times[0,T];H^1_0 (\cO))$ to an element $\tz$. But, it is easy by passing to the limit, to verify that $\tz$ is a weak solution of the equation:
\[ d\tz_t =\sum_{i,j}\partial_i a_{i,j}\partial_j \tz_t +\sum_i \partial_i w''_i ,\ \tz_0 =0.
\]
Since the weak solution is unique,  $\tz =z$. 
This permits to conclude that $\tu =u$. \\
Finally, as
\[ \lim_{n\rightarrow +\infty}\| \tu^n -u\|_T =0,\]
 to see that Proposition \ref{Ito1} remains valid, one just has to apply it to $\tu^n$ and then  pass to the limit by making
$n$ tend to $+\infty$.
\end{proof}
 We now  prove the following version of Ito's formula which is crucial to get uniform estimates of the solution.
\begin{Pro}
\label{Ito2} Let $u$  be the solution defined in Theorem \ref{MainTheo} with same hypotheses and $\varphi :{\mathbb{R}^+\times \mathbb{R}
}\rightarrow {\mathbb{R}}$ be one time differentiable with continuous derivative with respect to the first variable  and two times differentiable with continuous derivatives w.r.t. the second variable. We denote by  $\varphi ^{\prime }$ and $\varphi ^{\prime \prime }$ the derivatives of $\varphi$
with respect to the second variable and by $\frac{\partial \varphi}{\partial t}$ the partial derivative with respect to time. We assume that these derivatives are bounded   and $\varphi ^{\prime }\left(t, 0\right) =0$ for all $t\geq 0$. Then the
following relation holds a.s. for all $t\geq 0$:%
\begin{equation}
\label{Ito-varphi}
\begin{split}
\int_{{\cal O}}\varphi &\left(t, u_t\left( x\right) \right)  dx+  \int_0^t\int_{\cO}
\sum_{i,j=1}^d a_{i,j} (s,x)\varphi ^{\prime\prime }\left( s,u_s (x)\right )\partial_i u_s (x) \partial_j u_s (x) dx ds  =\int_{{\cal O}
}\varphi \left(0, \xi
\left( x\right) \right) dx\\&+\int_0^t \int_{\cO} \frac{\partial \varphi}{\partial s} (s,u_s (x))dxds+
\int_0^t \int_{\cO}
\varphi ^{\prime } \left(s,u_s (x) \right) \, w_s (x)  \, dx \, ds \\
& -\sum_{i=1}^d\int_0^t \int_{\cO} \partial _i  \varphi ^{\prime
}\left( s,u_s (x) \right) \, w_{i,s}^{\prime\prime} (x) dx ds +\sum_i
\int_0^t \int_{\cO} \varphi ^{\prime }(s,u_s(x))
w_{i,s}  (x) \, dx dB^i_s \\
& +\frac12\sum_{i=1}^{+\infty} \int_0^t \int_{\cO} \varphi ^{\prime \prime } (s,u_s(x)) \left( w_{i,s} (x)\right)^2 \, dx\, ds .\\
\end{split}
\end{equation}
\end{Pro}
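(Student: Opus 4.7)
The plan is to obtain the Itô formula as the limit of the corresponding identity for the smooth approximation $(\tu^n)$ introduced in the proof of Theorem \ref{MainTheo}. Since each $\tu^n$ is a convex combination of elements $u^{n_k}$ that are $H_0^2(\cO)$-valued semimartingales with smooth coefficients (cf.\ the proof of Proposition \ref{Ito1}), $\tu^n$ itself is an $H_0^2(\cO)$-valued semimartingale with the decomposition \eqref{SemMartin2}. Because $\varphi''$ is bounded and $\varphi'(s,0)=0$, the map $y\mapsto \varphi'(s,y)$ sends $H_0^1(\cO)$ into itself, so I may apply the It\^o formula for Hilbert-valued semimartingales (\cite{KR}, Chapter 1, Section 3) to the functional $v\mapsto \int_{\cO}\varphi(t,v(x))\,dx$, and then integrate by parts in the two divergence terms (the boundary terms vanish since $\varphi'(s,\tu_s^n)\in H_0^1(\cO)$). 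This yields, for each $n$, the analogue of \eqref{Ito-varphi} for $\tu^n$, with $\xi$, $w'$, $w''$, $w$ and $\sum_j a_{i,j}\partial_j u$ replaced by $\txi^n$, $\tw'^{,n}$, $\tw''^{,n}$, $\tw^n$ and $\tA^n_i$ respectively.

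Next I pass to the limit in each of the resulting terms using the convergences already established in the proof of Theorem \ref{MainTheo}: $\tu^n\to u$ strongly in $\hat{F}_T$ (so that $\tu^n$ and $\nabla\tu^n$ converge in $L^2(\Omega\times[0,T]\times\cO)$ to $u$ and $\nabla u$), $\txi^n\to\xi$ in $L^2(\Omega\times\cO)$, $\tw'^{,n}\to w'$, $\tw''^{,n}\to w''$, $|\tw^n|\to|w|$ in $L^2(\Omega\times[0,T]\times\cO)$, and $\tA^n_i\to\sum_j a_{i,j}\partial_j u$ in $L^2(\Omega\times[0,T]\times\cO)$. Extracting a further subsequence, I may assume $\tu^n\to u$ and $\nabla\tu^n\to\nabla u$ almost everywhere, so that $\varphi'(s,\tu^n)\to\varphi'(s,u)$ and $\varphi''(s,\tu^n)\to\varphi''(s,u)$ almost everywhere while remaining uniformly bounded. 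The linear terms in $\tu^n$---those coming from $\txi^n$, $\tw'^{,n}$, $\partial_s\varphi(s,\tu^n)$, and (after Burkholder--Davis--Gundy) the stochastic integral---then converge by straightforward dominated convergence combined with the boundedness of $\varphi'$.

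The delicate point concerns the three terms that are genuinely nonlinear in $\tu^n$: the elliptic bilinear form $\int a_{i,j}\varphi''(\tu^n)\partial_i\tu^n\partial_j\tu^n$, the term $-\sum_i\int\varphi''(\tu^n)\partial_i\tu^n\,\tw''^{,n}_i$ produced by the integration by parts, and the quadratic variation $\tfrac12\sum_i\int\varphi''(\tu^n)(\tw^n_i)^2$. Here a weak convergence of the gradients would not be sufficient; fortunately the convergence in $\hat{F}_T$ is strong. For each such term I split the difference into a coefficient part and a quadratic part, e.g.
\[
\varphi''(\tu^n)\partial_i\tu^n\partial_j\tu^n-\varphi''(u)\partial_iu\,\partial_ju=[\varphi''(\tu^n)-\varphi''(u)]\partial_i\tu^n\partial_j\tu^n+\varphi''(u)[\partial_i\tu^n\partial_j\tu^n-\partial_iu\,\partial_ju].
\]
The second summand tends to $0$ in $L^1$ by Cauchy--Schwarz and the strong $L^2$-convergence of the gradients, using boundedness of $\varphi''$. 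For the first summand, the integrand tends to $0$ almost everywhere and is dominated by $|\varphi''|_\infty(|\partial_i\tu^n|^2+|\partial_j\tu^n|^2)$, a sequence uniformly integrable in $L^1$ (since $|\nabla\tu^n|^2\to|\nabla u|^2$ in $L^1$), so Vitali's theorem yields the conclusion. The same splitting plus Vitali argument handles the cross term $\int\varphi''(\tu^n)\partial_i\tu^n\,\tw''^{,n}_i$ and the quadratic variation $\int\varphi''(\tu^n)(\tw^n_i)^2$, while for the martingale term I bound $\sum_i|\varphi'(\tu^n)\tw^n_i-\varphi'(u)w_i|^2\le 2|\varphi'|_\infty^2(|\tw^n|^2+|w|^2)$, whose right-hand side is uniformly integrable and converges in $L^1$, so a final appeal to Vitali and Burkholder--Davis--Gundy concludes the passage to the limit. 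The main obstacle is exactly this Vitali-type bookkeeping that allows one to identify the nonlinear limits without any regularity beyond the strong $L^2$-convergence provided by the approximation scheme.
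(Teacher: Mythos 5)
Your proposal follows essentially the same route as the paper: apply It\^o's formula to the semimartingale decomposition \eqref{SemMartin2} of the strongly convergent convex combinations $\tu^n$ from the proof of Theorem \ref{MainTheo}, then pass to the limit term by term using the strong convergence in $\hat{F}_T$ and the $L^2$-convergence of the $\tA^n_i$. Your treatment of the quadratic terms via uniform integrability and Vitali's theorem is a more explicit justification of the limit passage that the paper dispatches with a brief appeal to a.e.\ convergence along a subsequence and dominated convergence, but it is the same argument in substance.
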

\begin{proof} First of all, let us mention that due the boundedness  of the derivatives of $\varphi$ and the integrability conditions on $w,w'$ and $w''$, each of the terms in \eqref{Ito-varphi} are well defined. We consider the same approximation $(\tu_t^n)_{t\geq 0}$ as in
the proof of Theorem \ref{MainTheo} and we keep the same notations. 
We know that $\tu^n$ is a $H_0^1$-valued semimartingale and that it admits the decomposition given by \eqref{SemMartin2}.
  We apply the classical It\^{o}'s formula and then integrate w.r.t.
  $x$,
 this yields:
\begin{equation*}
\begin{split}
\int_{\cO}&\varphi \left( t,\tu^n_t\left( x\right) \right)dx + \sum_{i=1}^d \int_0^t\int_{\cO}
\tA_i^n (s,x)\partial_i \varphi ^{\prime }\left( s,\tu^n_s (x)\right) \, dxds
=\int_{{\cal O} }\varphi \left(0, \xi^n \left( x\right) \right)
dx\\& +\int_0^t \int_{\cO} \frac{\partial \varphi}{\partial s} (s,\tu^n_s (x))dxds+\int_0^t \int_{\cO}
\varphi ^{\prime } \left(s,\tu^n_s (x) \right) \, \tw^{\prime ,n}_s (x)  \, dx \, ds\\&
-\sum_{i=1}^d\int_0^t \int_{\cO} \partial _i  \varphi ^{\prime
}\left( s,\tu^n_s (x) \right) \, \tw_{i,s}^{\prime\prime ,n} (x) dx ds
\ + \sum_{i=1}^{+\infty} \int_0^t \int_{\cO}\varphi' (s, \tu^n_s (x) )\tw^{n}_{i,s}  (x)\,dx dB^i_s \\
& \ \ +\frac12\sum_{i=1}^{+\infty} \int_0^t \int_{\cO} \varphi ^{\prime \prime }
(s, \tu^n_s(x)) \left( \tw_{i,s}^{n } (x)\right)^2 \, dx\, ds .\\
\end{split}
\end{equation*}
By extracting subsequences, we can assume that $(\tu^n)_n$ and 
$(\tA_i^n )_n$ converge in $L^2 (\Omega\times [0,T]\times \cO )$ and $dt\times dx \times P$-almost everywhere respectively to $\tu$ and $\sum_{j=1}^d a_{i,j}\partial_j u$, so that we can apply the dominated convergence Theorem in each term of the previous equality and obtain the result in the general case.
\end{proof}
\subsection{Existence and uniqueness of the solution in $\cH$ under {\bf (HD2)} and {\bf (HI2)}}
The aim of this section is to prove existence and uniqueness of the solution of $\eqref{e1}$ with zero Dirichlet condition on the boundary under usual $L^2$-integrability conditions and assumption {\bf (H)}.\\
So, all along this section, we assume that hypotheses {\bf (H)},  {\bf (HD2)} and {\bf (HI2)} hold.
\begin{Pro}
Notions of {\rm mild solution} and {\rm weak solution} coïncide.
\end{Pro}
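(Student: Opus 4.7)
The plan is to reduce the equivalence to the linear case already handled in Theorem \ref{MainTheo}. Given any candidate $u\in\cH$, the strategy is to freeze the coefficients by setting
\[ w'_s(x):=f(s,x,u_s(x),\nabla u_s(x)),\quad w''_{i,s}(x):=g_i(s,x,u_s(x),\nabla u_s(x)),\quad w_{i,s}(x):=h_i(s,x,u_s(x),\nabla u_s(x)), \]
and then to view both mild and weak formulations of \eqref{e1} as the corresponding formulations of the \emph{linear} equation \eqref{linear0} driven by these frozen $w',w'',w$.

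First I would check the integrability hypotheses of Theorem \ref{MainTheo} for this choice of data. Since $u\in\cH$ one has $E\sup_{s\le T}\|u_s\|^2 + E\int_0^T\|\nabla u_s\|^2ds<\infty$; combining this with the Lipschitz estimates in \textbf{(H)(i)--(iii)} and the linear growth of $f,g,h$ in $(y,z)$ (obtained by bounding by values at $0$ plus a Lipschitz term), together with \textbf{(HD2)}, yields
\[ w'\in\PP,\quad w''\in\PP^d,\quad w=(w_i)\in\PP^{\bbN^*},\quad E\!\int_0^T\!\|w'_s\|^2ds + E\!\int_0^T\!\||w''_s|\|^2 ds + E\!\int_0^T\!\||w_s|\|^2 ds <\infty, \]
and $\xi\in L^2(\Omega;L^2(\cO))$ by \textbf{(HI2)}. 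Thus Theorem \ref{MainTheo} applies: the process $\Phi(u)$ defined by the Green-function representation \eqref{expressionofu} with these $w',w'',w$ belongs to $\cH$ (in fact to $\hat F$), and is the \emph{unique} weak solution in $\cH$ of the linear equation \eqref{linear0} associated with $(\xi,w',w'',w)$.

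The equivalence is then immediate. If $u$ is a mild solution of \eqref{e1}, then by definition $u=\Phi(u)$, and by Theorem \ref{MainTheo} this common process satisfies the weak formulation of the linear equation with frozen coefficients, which is exactly the relation \eqref{weak2}; hence $u$ is a weak solution. Conversely, if $u\in\cH$ is a weak solution of \eqref{e1}, then by inspection of \eqref{weak2} the same $u$ is a weak solution in $\cH$ of the linear equation \eqref{linear0} with the frozen $(w',w'',w)$ above. The uniqueness statement of Theorem \ref{MainTheo} forces $u=\Phi(u)$, i.e.\ $u$ satisfies the mild equation \eqref{mild0}.

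The only real subtlety lies in making sure that all the terms in the mild/weak formulations are interpreted consistently: the divergence terms $\partial_i w''_i$ in \eqref{linear0} are by construction understood in the weak sense through the operator $U$ of Proposition \ref{opU}, and this is precisely how the $g_i$-term enters \eqref{mild0} and \eqref{weak2}. Once this identification is made, no further approximation or calculation is needed; the main work has already been absorbed into Theorem \ref{MainTheo}.
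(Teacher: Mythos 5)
Your argument is correct and follows essentially the same route as the paper: freeze the nonlinear coefficients along the given $u$, observe that both the mild and weak formulations of \eqref{e1} become the corresponding formulations of the linear equation \eqref{linear0}, and invoke the existence/uniqueness statement of Theorem \ref{MainTheo} to identify the two. Your explicit verification that the frozen data $(w',w'',w)$ satisfy the integrability hypotheses of Theorem \ref{MainTheo} (via \textbf{(H)} and \textbf{(HD2)}) is a point the paper leaves implicit, but otherwise the two proofs coincide.
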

\begin{proof}
The fact that any mild solution is a weak solution follows from Theorem \ref{MainTheo}.

Conversely, assume that $u$ is a weak solution and define the process
\begin{equation}\label{mild} \begin{split}
v_t (\cdot )=&\int_{\cO}G(t,\cdot,0,y)\xi(y)\, dy
+\int_0^t\int_{\cO} G(t,\cdot,s,y)f(s,y ,u_s (y),\nabla u_s (y))dy
ds \\
&\ \ +\sum_{i=1}^{d}\int_0^t\int_{\cO}
G(t,\cdot,s,y)\partial_{i,y}g_i(s,. ,u_s ,\nabla u_s )(y)dy ds
\\&\ \ +\sum_{i=1}^{+\infty}\int_0^t\int_{\cO} G(t,\cdot,s,y)h_i(s,y ,u_s (y),\nabla
u_s (y))dB^i_s .\\
\end{split}
\end{equation}
We should prove that $u=v.$ Comparing the value of the integral
\[
\int_{0}^{\infty }\int_\cO [u_{s}(x)\partial _{s}\varphi (x) -\sum_{i,j=1}^d %
a_{i,j}(s,x)\partial_i u_{s} (x)\partial_j \varphi _{s}(x) ]dxds,
\]
obtained from the relation defining a weak solution, and the value of the
same integral with $v$ in the place of $u,$ given by the relation  \eqref{weak0}, we observe that the two are almost surely equal.
So, we deduce that
\[
\int_{0}^{\infty }\int_\cO (u_{s}(x)-v_s (x))\partial _{s}\varphi (x) -\sum_{i,j=1}^d %
a_{i,j}(s,x)\partial_i (u_{s} (x)-v_s (x))\partial_j \varphi _{s}(x) \, dxds=0,
\]
almost surely, for each $\varphi \in \mathcal{D}.$ Since $\mathcal{D}$
contains a countable set which is dense in it, we deduce that the relation
holds with arbitrary $\varphi \in \mathcal{D},$ outside of a negligeable set
in $\Omega .$  From this, it is standard to conclude that $u=v$ almost surely.
\end{proof}
The proof of the following Theorem is given in the Appendix \ref{Appendix2}.
\begin{Th}\label{ExistenceL2} Under hypotheses {\bf (H)}, {\bf (HD2) }and {\bf (HI2)}, equation \eqref{e1} with zero Dirichlet condition on the boundary admits a unique solution, $u$,  which belongs to $\cH$. Moreover $u$  admits $L^2 (\cO )$-continuous
trajectories and satisfies the following estimate:
\begin{equation}
\label{estimateL2}
E[\left\| u\right\| _{T}^{2}]\leq cE\left[ \left\| \xi\right\|
^{2}+ \left\| | f^0|\right\|^{2}_{2,2;T}+\left\|
|g^0 |\right\|^{2}_{2,2;T}+\left\| |h^0|\right\|^{2}_{2,2;T} \right] ,
\end{equation}
where $c$ is a constant which only depends on the structure constants.
\end{Th}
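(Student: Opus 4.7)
The plan is a Picard fixed-point argument in the mild formulation. For $u\in\hat{F}_T$, let $\Phi(u)\in\hat{F}_T$ be the unique solution provided by Theorem~\ref{MainTheo} of the linear SPDE with data $w'_s=f(s,\cdot,u_s,\nabla u_s)$, $w''_{i,s}=g_i(s,\cdot,u_s,\nabla u_s)$ and $w_{i,s}=h_i(s,\cdot,u_s,\nabla u_s)$. Hypothesis \textbf{(H)} together with \textbf{(HD2)} and \textbf{(HI2)} forces these data to satisfy the integrability requirements of Theorem~\ref{MainTheo}, so $\Phi$ is well defined. By the mild--weak equivalence just established, any fixed point of $\Phi$ is automatically a weak solution of \eqref{e1}.

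To show $\Phi$ is a contraction on $\hat{F}_T$, pick $u^1,u^2\in\hat{F}_T$ and set $V=\Phi(u^1)-\Phi(u^2)$, $v=u^1-u^2$. Since $V$ solves the linear equation driven by $f(u^1)-f(u^2)$, $g(u^1)-g(u^2)$, $h(u^1)-h(u^2)$, the energy identity \eqref{Ito0} (extended to rough coefficients by Theorem~\ref{MainTheo}) applies. Using ellipticity on the left-hand side, the Lipschitz bounds \textbf{(H)(i)--(iii)} on the right, Young's inequality with the optimal split $2\alpha|\nabla V||\nabla v|\leq\alpha|\nabla V|^2+\alpha|\nabla v|^2$, and the Burkholder--Davis--Gundy inequality for the martingale part, one obtains a bound in which $E\sup_{t\leq T}\|V_t\|^2$ and $(2\lambda-\alpha-\varepsilon)E\int_0^T\|\nabla V_s\|^2\,ds$ are controlled by $(\alpha+\beta^2+\varepsilon)E\int_0^T\|\nabla v_s\|^2\,ds$ plus lower-order terms of the form $K_\varepsilon\int_0^T E(\|v_s\|^2+\|V_s\|^2)\,ds$, for any arbitrarily small $\varepsilon>0$.

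The main obstacle is the tight calibration of Young's inequality: only under the strict contraction condition \textbf{(H)(iv)}, $\alpha+\beta^2/2<\lambda$, is the ratio $(\alpha+\beta^2)/(2\lambda-\alpha)$ strictly less than $1$, which is exactly what makes the gradient part of the above estimate contractive. The condition is saturated by the choice $\delta=1$ in the Young split (minimizing $\alpha\delta+\alpha/\delta=2\alpha$); this saturation is precisely why \textbf{(H)(iv)} appears in the specific form $\alpha+\beta^2/2<\lambda$. The remaining $L^2$ terms in $v$ and $V$ are then handled either by Gronwall's lemma on a small time interval iterated a finite number of times, or by weighting the norm with $e^{-\mu t}$ for $\mu$ large. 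Banach's fixed-point theorem then yields a unique $u\in\hat{F}_T\subset\cH$ solving \eqref{e1}, and uniqueness extends to all of $\cH$ by the same energy computation applied to any two solutions.

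Finally, the estimate \eqref{estimateL2} follows by the same method applied directly to $u$: use \eqref{Ito0}, the linear growth bounds $|f(\cdot,u,\nabla u)|\leq|f^0|+C(|u|+|\nabla u|)$ and its analogues for $g$ and $h$, absorb the gradient terms into the elliptic contribution via the Young inequality calibrated by \textbf{(H)(iv)}, control the martingale by Burkholder--Davis--Gundy, and close by Gronwall. All constants track back to the structure constants $\lambda,C,\alpha,\beta$ (and $T$), giving the claimed $c$.
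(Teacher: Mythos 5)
Your architecture is the same as the paper's: a Picard fixed point for the map $u\mapsto\Lambda(u)$ defined through the linear equation of Theorem~\ref{MainTheo} with frozen nonlinearities, the energy identity, the Lipschitz bounds calibrated by \textbf{(H)(iv)} (indeed $\alpha+\beta^2/2<\lambda$ is equivalent to $(\alpha+\beta^2)/(2\lambda-\alpha)<1$, which is exactly the contraction ratio the paper obtains), and an exponential weight $e^{-\gamma t}$ to absorb the lower-order $L^2$ terms. The estimate \eqref{estimateL2} is then read off from \eqref{priori:estimate}. So far this matches the Appendix~\ref{Appendix2} proof.

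There is, however, one step that would fail as you have written it: you claim that the contraction can be run in $\hat F_T$, i.e.\ that $E\sup_{t\le T}\|V_t\|^2$ together with $(2\lambda-\alpha-\varepsilon)E\int_0^T\|\nabla V_s\|^2\,ds$ is bounded by $(\alpha+\beta^2+\varepsilon)E\int_0^T\|\nabla v_s\|^2\,ds$ plus genuinely lower-order terms. To control the supremum of the martingale part you must invoke Burkholder--Davis--Gundy, which produces a term of the form $\varepsilon E\sup_t\|V_t\|^2+\frac{c_1}{4\varepsilon}E\int_0^T\||h(u^1)-h(u^2)|\|^2\,ds$; since $\||h(u^1)-h(u^2)|\|^2$ contains $\beta^2\|\nabla v_s\|^2$, this injects a contribution $\frac{c_1}{4\varepsilon}\beta^2E\int_0^T\|\nabla v_s\|^2\,ds$ that is \emph{not} lower order and cannot be absorbed into the coefficient $\alpha+\beta^2+\varepsilon$; making $\varepsilon$ small only makes it worse. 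This is precisely why the stronger condition $\alpha+\beta^2/2+72\beta^2<\lambda$ appears in Theorem~\ref{Lpestimate}, where a sup-type norm genuinely enters the iteration. The correct (and the paper's) remedy is to run the contraction in the norm $E\int_0^Te^{-\gamma t}(\delta\|u_t\|^2+\|\nabla u_t\|^2)\,dt$ on $F_T=L^2(\Omega\times[0,T];H_0^1(\cO))$, where one takes plain expectations so the stochastic integral vanishes and no Burkholder--Davis--Gundy constant is incurred; only after the fixed point $u=\Lambda(u)$ is obtained does one apply Theorem~\ref{MainTheo} and estimate \eqref{priori:estimate} to conclude that $u\in\cH$, has $L^2(\cO)$-continuous trajectories, and satisfies \eqref{estimateL2} (the same two-step order is needed in your last paragraph: first bound $E\int_0^T\|\nabla u_s\|^2\,ds$ without the supremum, then feed that into the Burkholder--Davis--Gundy bound to control $E\sup_t\|u_t\|^2$). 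With this reordering your argument is complete and coincides with the paper's.
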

\subsection{$L^p$-estimate of the uniform norm of the solution}\label{supnorm}

As in \cite{DMS1,DMS2}, for $\theta\in [0,1)$ and $p\geq 2$ fixed, we consider the following assumptions:\\

{\bf Assumption (HI$\infty p$)}%
$$ E\left\| \xi \right\| _\infty ^p<\infty . $$
{\bf Assumption (HD$\theta p$)}
$$ E\left( \left( \left\| f^0\right\| _{\theta ;t}^{*}\right)
^p+\left( \left\| \left| g^0\right| ^2\right\| _{\theta
;t}^{*}\right) ^{\frac p2}+\left( \left\| \left| h^0\right|
^2\right\| _{\theta ;t}^{*}\right) ^{\frac p2}\right) <\infty , $$
for each $t\ge 0$. \\
Here, $\|\ \|^*_{\theta;t}$ is the functional norm similar to $\|\ \|^*_{\#;T}$ (see Appendix \ref{lpq}).\\

In \cite{DMS1}, in the case  of a SPDE driven by a finite dimensional Brownian motion and an
homogeneous second order symmetric differential operator, we have established an $L^p$-estimate of the uniform norm of the solution.  The proof of this $L^p$-estimate is based on  It\^{o}'s formula applied to the power function and the domination of the quadratic variation of the martingale part in this formula. However, the method and the technics involved to get this estimate do not depend on the dimension of the Brownian motion neither on the fact that the  matrix $a$  is homogeneous in time. Therefore, to generalize these results to our context, we  can follow
the same arguments as in \cite{DMS1} starting from Lemma 12 of this reference and this yields:

\begin{Th}\label{Lpestimate}
Assume {\bf (H)}, (\textbf{HD$\theta p$)}, \textbf{(HI$\infty p$)}
for some $\theta \in [0,1[$, $p\ge 2,$ and that the constants of the
Lipschitz conditions satisfy $\alpha +\frac{\beta ^2}2+72\beta
^2<\lambda $. Let $u= {\cal U}\left( \xi ,f,g,h\right) $, then
$$\forall t\geq 0 ,\  E\left\| u\right\| _{\infty ,\infty ;t}^p\le k\left(
t\right) E\left( \left\| \xi \right\| _\infty ^p+\left( \left\|
f^{0}\right\| _{\theta ;t}^{*}\right) ^p+\left( \left\| \left|
g^0\right| ^2\right\| _{\theta ;t}^{*}\right) ^{\frac p2}+\left(
\left\| \left| h^0\right| ^2\right\| _{\theta ;t}^{*}\right) ^{\frac
p2}\right) , $$ where $k\left( t\right) $ is a constant which depends
on the structure constants and $t.$
\end{Th}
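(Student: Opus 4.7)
The plan is to transpose the Aronson--Serrin strategy developed in \cite{DMS1, DMS2}, the key point being that Proposition \ref{Ito2} furnishes an It\^o formula of exactly the same structure as the one used in the semigroup framework of those works, with the ellipticity constant $\lambda$ being the only coefficient-dependent quantity that enters the subsequent estimates. Consequently, once the It\^o formula is in hand, the non-homogeneity of $a$ and the irregularity of $\partial\cO$ play no further role.

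First I would apply Proposition \ref{Ito2} to a smooth convex approximation $\varphi_\delta$ of $y\mapsto |y|^p$ satisfying $\varphi'_\delta(t,0)=0$ and $\varphi''_\delta$ bounded, and then pass to the limit $\delta\to 0$. The second-order term $\varphi''_\delta(u_s)$ coupled with the ellipticity condition (\ref{ellip}) yields a coercive quantity of the form $\lambda\, p(p-1)\int_0^t\!\int_\cO |u_s|^{p-2}|\nabla u_s|^2\,dx\,ds$. The Lipschitz assumptions \textbf{(H)}(i)--(iii), combined with repeated use of Young's inequality, split the nonlinear contributions of $f$, $g$ and the It\^o correction on $h$ into a good piece involving $|u|^{p-2}|\nabla u|^2$ (to be absorbed into the coercive term) and a data piece involving $f^0$, $g^0$, $h^0$. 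Taking the supremum in $t$ and applying the Burkholder--Davis--Gundy inequality to the martingale term introduces the additional constant that forces the strengthened contraction condition $\alpha+\beta^2/2+72\beta^2<\lambda$, so that every error term can still be absorbed on the left-hand side, uniformly in $p$.

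Second, combining the resulting bound for $E\sup_{s\le t}\|u_s\|_p^p$ with the Sobolev inequality $\|v\|_{2^*}\le c_S\|\nabla v\|_2$ applied to $v=|u|^{p/2}$ yields control of the interpolation norm $E\|u\|_{\#;t}^p$ in terms of the data norms $\|f^0\|^{*}_{\theta;t}$, $\|\,|g^0|^2\,\|^{*}_{\theta;t}$ and $\|\,|h^0|^2\,\|^{*}_{\theta;t}$, via the H\"older-type pairing (\ref{Holder2}). A Moser iteration in the exponent then promotes this $L^p$ bound into the announced $L^{\infty,\infty}$ estimate: at each step one raises the integrability exponent by a fixed factor $2^*/2$, and the constants generated along the iteration remain summable thanks to the strict contraction $\alpha+\beta^2/2+72\beta^2<\lambda$, which provides the uniform margin needed to close the scheme as $p\to\infty$.

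I expect the main obstacle to be the careful tracking of universal constants through the Burkholder--Davis--Gundy step and the Moser iteration, ensuring that the absorption into the coercive ellipticity term can be performed with a constant that is independent of the iteration index. This bookkeeping is precisely what is carried out in \cite{DMS1} from Lemma~12 onward in a form depending only on $\lambda$ and on the structural constants $C,\alpha,\beta$ of \textbf{(H)}. Since Proposition \ref{Ito2} replaces, one-to-one, the semigroup-based It\^o formula used in \cite{DMS1}, and since neither the Green function nor the non-homogeneity of $a$ appear explicitly in those subsequent estimates, the remainder of the proof goes through verbatim once the above reduction is made.
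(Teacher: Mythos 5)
Your proposal is correct and follows essentially the same route as the paper: the paper's proof consists precisely of observing that Proposition \ref{Ito2} (equivalently, the It\^o formula of Theorem \ref{existence}) supplies an identity of the same structure as in the homogeneous, finite-dimensional-noise setting, so that the iteration argument of \cite{DMS1} from Lemma~12 onward applies verbatim, the constants depending only on $\lambda$ and the structure constants of \textbf{(H)}. Your sketch of the internal mechanics of that argument (power functions, absorption via ellipticity, Burkholder--Davis--Gundy forcing the condition $\alpha+\frac{\beta^2}{2}+72\beta^2<\lambda$, and the Moser-type iteration in the exponent) is an accurate description of what is being imported.
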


\section{Maximum principle for local solutions}{\label{LocalSolutions}}
 In \cite{DMS2}, we have proven a maximum principle for SPDE's driven by a finite dimensional Brownian motion and
homogeneous second order symmetric differential operator. To extend these results to our context, we  follow
the same plan as in \cite{DMS2}. We mention the different  estimates who lead to the result and give the details of the proofs only  when needed.
%\subsection{Estimate of the positive part of a local solution }
\subsection{Estimates of the solution with null Dirichlet condition under {\bf (HD\#)}}\label{estimatediese}
The first step consists in establishing an estimate for the positive part of the solution with null Dirichlet condition. To get this estimate,
we can adapt to our case the arguments of proofs of Theorem 3, Corollary 1 and Theorem 4 in \cite{DMS2} which are based only on  estimate \eqref{estimateL2} and  It\^{o}'s formula for the solution which  do not depend on the dimension of the noise neither on the fact that the matrix $a$ is homogeneous. This yields:
\begin{Th}{\label{existence}}
Under the conditions {\bf (H)}, {\bf (HD\#) } and {\bf{(HI2)}} there
exists a unique solution $u$ of (\ref {e1}) in $\HH $.
This solution  has a version with $L^2 (\cO)$-continuous
trajectories and it satisfies the
following estimates for each $t\ge 0$ :\begin{enumerate}
\item $ E\left( \left\| u\right\| _{2,\infty ;t}^2+\left\| \nabla
u\right\| _{2,2;t}^2\right) \le k\left( t\right) E\left( \left\| \xi
\right\|_2^2+\left( \left\| f^0\right\| ^*_{\#;t}\right) ^2+\left\|
g^0\right\| _{2,2;t}^2+\left\| |h^0 |\right\| _{2,2;t}^2\right) .$\\
\item Let  $\varphi : \mathbb{R
}\rightarrow \mathbb{{R}}$ be a function of class ${\cal C}^2$ and
assume that $\varphi ^{\prime \prime }$ is bounded and $\varphi
^{\prime }\left( 0\right) =0.$ Then the
following relation holds a.s. for all $t\geq 0$:%
$$
\int_{{\cal O}}\varphi \left( u_t\left( x\right) \right) dx+\int_0^t{\cal E}%
\left( \varphi ^{\prime }\left( u_s\right) ,u_s\right) ds=\int_{{\cal O}%
}\varphi \left( \xi \left( x\right) \right) dx+\int_0^t\left(
\varphi ^{\prime }\left( u_s\right) ,f_s ( u_s ,\nabla u_s \right)
ds
$$
$$ -\int_0^t\sum_{i=1}^d\left( \partial _i\left( \varphi ^{\prime
}\left( u_s\right) \right) ,g_{i,s}(u_s ,\nabla u_s \right) ds+\frac
12\int_0^t\left( \varphi ^{\prime \prime }\left( u_s\right) ,\left|
h_{s} (u_s ,\nabla u_s )\right| ^2\right) ds$$
$$+\sum_{j=1}^{+\infty}\int_0^t\left( \varphi ^{\prime }\left( u_s\right)
,h_{j,s} (u_s ,\nabla u_s )\right) dB_s^j. $$
\item The positive part of the solution satisfies the following estimate%
$$ E\left( \left\| u^{+}\right\| _{2,\infty ;t}^2+\left\| \nabla
u^{+}\right\| _{2,2;t}^2\right) \le k\left( t\right) E\left( \left\| \xi
^{+}\right\|_2^2+\left( \left\| f^{u,0+}\right\| _{\#;t}^*\right) ^2+\left\|
g^{u,0}\right\| _{2,2;t}^2+\left\| |h^{u,0}|\right\| _{2,2;t}^2\right) , $$
\end{enumerate}
 where $k\left( t\right) $ is a constant that only
depends on $t$ and the structure constants  and
\begin{equation}
\label{f0+} \begin{split} &f^{u,0}=1_{\left\{ u>0\right\}
}f^0,\;g^{u,0}=1_{\left\{ u>0\right\} }g^0,\;h^{u,0}=1_{\left\{
u>0\right\} }h^0,\\
&f^u=f-f^0+f^{u,0},\;g^u=g-g^0+g^{u,0},\;h^u=h-h^0+h^{u,0} \\
& f^{u,0+}=1_{\left\{ u>0\right\} }\left( f^0\vee 0\right) ,\;\xi
^{+}=\xi \vee 0.\\
\end{split}
\end{equation}
\end{Th}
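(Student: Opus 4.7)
The plan is to adapt the arguments of \cite{DMS2} (Theorem 3, Corollary 1, Theorem 4) to the present non-homogeneous setting. The key point, emphasized by the authors, is that those proofs rest only on two tools now available to us: the $L^2$-existence statement of Theorem \ref{ExistenceL2} and the Itô formula of Proposition \ref{Ito2}. No regularity of $a$ or of $\partial \cO$ enters these arguments.

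For part (1), I would proceed by approximation. Set $f^{0,n} = f^0 \mathbf{1}_{\{|f^0|\le n\}}$ (or a similar truncation so that $f^{0,n} \in L^{2,2}$ while $f^{0,n} \to f^0$ in $L_{\#;t}^*$), and replace $f(t,x,y,z)$ by $f^n(t,x,y,z) = f(t,x,y,z) - f^0(t,x) + f^{0,n}(t,x)$. Then $(f^n,g,h)$ satisfies \textbf{(HD2)}, so by Theorem \ref{ExistenceL2} there is a unique solution $u^n \in \cH$. Applying Proposition \ref{Ito2} with $\varphi(t,x) = x^2$ to $u^n$ gives
\[
\|u^n_t\|^2 + 2\lambda \int_0^t\|\nabla u^n_s\|^2 ds \;\leq\; \|\xi\|^2 + 2\int_0^t(u^n_s,f^n_s)\,ds - 2\sum_i\int_0^t(\partial_i u^n_s, g_{i,s})\,ds + \int_0^t\||h_s|\|^2 ds + M^n_t,
\]
where $M^n$ is a local martingale. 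Splitting $f^n_s = f^{0,n}_s + (f^n_s-f^{0,n}_s)$, the crucial estimate is
\[
2\int_0^t(u^n_s,f^{0,n}_s)\,ds \;\leq\; 2\|f^{0,n}\|^*_{\#;t}\,\|u^n\|_{\#;t} \;\leq\; \varepsilon(\|u^n\|_{2,\infty;t}^2 + c_S^2\|\nabla u^n\|_{2,2;t}^2) + \varepsilon^{-1}(\|f^{0,n}\|^*_{\#;t})^2,
\]
obtained by the duality \eqref{Holder2} combined with Sobolev $\|u^n\|_{2^*,2;t} \leq c_S\|\nabla u^n\|_{2,2;t}$. The Lipschitz residual $|f^n_s - f^{0,n}_s| \leq C(|u^n_s|+|\nabla u^n_s|)$ and the $g,h$ terms are handled in the standard way with Gronwall and Burkholder-Davis-Gundy (as in Theorem \ref{MainTheo}), yielding the desired estimate uniformly in $n$. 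Applying the same calculation to $u^n - u^m$ (which solves a linear SPDE with coefficients vanishing as $n,m\to\infty$) shows the sequence is Cauchy in $\cH$; the limit $u$ is the required solution, and uniqueness follows from the same a priori estimate applied to the difference of two solutions.

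For part (2), Proposition \ref{Ito2} applies directly once one verifies that $w'_s = f(s,u_s,\nabla u_s)$, $w''_{i,s} = g_i(s,u_s,\nabla u_s)$ and $w_{i,s} = h_i(s,u_s,\nabla u_s)$ are predictable with the required integrability. This is immediate from \textbf{(H)}: Lipschitz continuity and $u \in \cH$ give $|w'_s| \leq |f^0_s| + C(|u_s|+|\nabla u_s|)$ (so $w' \in L_{\#;t}^*$), and the pointwise estimate on $|h|$ gives $E\int_0^T\||w_s|\|^2 ds < \infty$; similarly for $w''$.

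For part (3), apply the Itô formula of part (2) to a $C^2$ approximation $\varphi_\varepsilon$ of $x \mapsto (x^+)^2$, with $\varphi_\varepsilon''$ bounded, $\varphi_\varepsilon'(0)=0$, and $\varphi_\varepsilon'(x)=0$ for $x\leq 0$. Passing to the limit $\varepsilon\to 0$, every occurrence of $f,g,h$ in the formula carries the factor $\mathbf{1}_{\{u_s>0\}}$, so $f^0$ is effectively replaced by $f^{u,0}$ and analogously for $g,h$. Moreover, since $\varphi_\varepsilon'(u_s)\geq 0$, one estimates $(\varphi_\varepsilon'(u_s), f^{u,0}_s)$ from above by $(\varphi_\varepsilon'(u_s), f^{u,0+}_s)$, yielding the sharper right-hand side. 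Then the same absorbing argument used in part (1), now applied to $u^+$ with coefficients $(f^u, g^u, h^u)$ and $f^{u,0+}$ in place of $f^0$, produces the stated estimate.

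The main obstacle is the uniform a priori bound in the $\#$-norm setting. The $L^{2,2}$-analogue (used in Theorem \ref{ExistenceL2}) is elementary, but to work in the dual space $L_{\#;t}^*$ one must carefully combine the Hölder-type inequality \eqref{Holder2} with the Sobolev embedding so that the resulting $\|\nabla u\|_{2,2;t}^2$ term can be absorbed into the coercive left-hand side coming from ellipticity; this is exactly the step where the conjugate pair $(2,\infty)/(2^*,2)$ defining the $\#$-norm is essential. The rest, including the sign-preservation argument for $u^+$, is then a faithful transcription of the proofs in \cite{DMS2}.
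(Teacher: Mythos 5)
Your proposal is correct and follows essentially the same route as the paper, which itself simply defers to the arguments of \cite{DMS2} (Theorem 3, Corollary 1, Theorem 4), noting that they rely only on the estimate \eqref{estimateL2} and the It\^o formula and are insensitive to the dimension of the noise and the time-dependence of $a$; your truncation of $f^0$, the duality-plus-Sobolev absorption in the $\#$-norm, and the smoothing of $x\mapsto (x^+)^2$ are exactly the ingredients of that adaptation. The one imprecision is in part (2): Proposition \ref{Ito2} cannot be applied ``directly'' to $w'_s=f(s,u_s,\nabla u_s)$, since under \textbf{(HD\#)} alone this term lies in $L^*_{\#;t}$ but need not be square-integrable as required there (and $\varphi'$ need not be bounded), so the It\^o formula must instead be obtained by passing to the limit along the approximating solutions $u^n$ constructed in part (1).
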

Let us mention that a similar relations to the one of point 3. have been obtained by Krylov, under stronger conditions (see \cite{Krylov2}, Lemma 2.4 and Lemma 2.5).
\subsection{Estimate of the positive part of a local solution}{\label{EstimatePositive}}
We first make the following remark concerning the regularity of the trajectories of any local solution.
\begin{Rq}
We have proved  in Theorem \ref{existence} that under {\bf (H)}, {\bf (HD\#) } and {\bf{(HI2)}} the
solution with null Dirichlet conditions at the boundary of $ \mathcal{O} $ has
a version with $L^2 \left(\mathcal{O }\right)$-continuous trajectories and, in
particular, that $ \lim_{t \to 0} \|u_t - \xi \|_2 =0$,
 a.s. This property extends to the local solutions in the sense
that any element of $ \mathcal{U}_{loc} (\xi,f,g,h)$ has a version
with the property that a.s. the trajectories are $L^2
\left(K\right)$-continuous, for each compact set $K \subset
\mathcal{O}$ and $$ \lim_{t \to 0} \int_K \left( u_t(x) - \xi(x) \,
\right)^2\, dx = 0.
$$
In order to see this it suffices to take a test function $ \phi \in
\mathcal{C}_c^{\infty} (\cO )$ and to verify that $ v = \phi u$ satifies the
equation
$$
dv_t = \left( Lv_t + \overline{f}_t + div \overline{g}_t \right) +
\overline{h}_t dB_t,
$$
with the initial condition $ v_0 = \phi \xi $, where
\begin{equation*}
\begin{split}
&\overline{f}_t (x) = \phi(x) f \left(t,x,u_t(x), \nabla u_t(x)
\right) - \langle \, \nabla \phi (x), \, a(x) \nabla u_t(x) \rangle
- \langle \nabla \phi (x), \, g \left(t,x, u_t(x), \nabla u_t (x)
\right)\,
\rangle ,\\
& \overline{g}_t (x) = \phi(x) g \left(t,x,u_t(x), \nabla u_t(x)
\right) - u_t (x) a(x) \nabla \phi (x) \quad \mbox{and}
\\
& \overline{h}_t (x) = \phi(x) h \left(t,x,u_t(x), \nabla u_t(x)
\right). \\
\end{split}
\end{equation*}
Thus
$v={\mathcal{U}}\left(\phi\xi,\overline{f},\overline{g},\overline{h}\right) $
 and the results of Theorem \ref{existence} hold for $v$. \\
 
\end{Rq}

Now, we consider  $u\in \mathcal{U}_{loc} (\xi ,f,g,h)$ and in order to simplify the notation we put
$$f_s = f(s,x,u_s (x),\nabla u_s (x)),\ g_s = g(s,x,u_s (x),\nabla u_s (x)), \ h_s = h(s,x,u_s (x),\nabla u_s (x)).$$
So that, $u$ is the solution of
\begin{eqnarray}{\label{lineareq}}
du_t &=& \left( \sum_{i,j=1}^d\partial_i (a_{i,j}(t,\cdot )\partial_j u_t) +f_t  +\sum_{j=1}^d \partial_j g_{j,t}\right) dt +\sum_{i=1}^{+\infty}h_{i,t} dB^i_t ,
\end{eqnarray}
with initial conditiuon $u_0=\xi$.\\
Let us remark that this technic has already been used by Krylov (\cite{Krylov2}, Lemma 2.5) in order to get Itô's formula for the non-negative part 
 of the solution. We also obtain such Itô's formula in our setting:
\begin{Le}\label{lemme14} Assume that $\partial O$ is Lipschitz,  conditions {\bf (H)}, {\bf (HD\#) } and {\bf{(HI2)}} hold.  Let $u\in \mathcal{U}_{loc} (\xi ,f,g,h)$ such  that $u^+ \in \mathcal{H}$, i.e. following Definition \ref{Def3}, $u$ is non-positive on the boundary of $\cO$.\\
Let $\varphi:\R \rightarrow \R$ be a function of class $C^2$ with bounded second order derivative and assume that $\varphi (0)=\varphi' (0)=0$. Then with the notations introduced above:
\begin{equation}{}
\begin{split}
\int_{{\cal O}}&\vphi \left( u^+_t\left( x\right) \right) dx+  \sum_{i,j=1}^d\int_0^t\int_{\cO}
\vphi'' \left( u^+_s\left( x\right) \right)a_{i,j} (s,x)\partial_i u^+_s (x) \partial_j u^+_s (x) dx ds  =\int_{{\cal O}
}\vphi\left(\xi^+
\left( x\right) \right)dx\\&+
\int_0^t \int_{\cO}
\vphi' (u^+_s (x) ) \, {f}_s (x)  \, dx \, ds
 -\sum_{i=1}^d\int_0^t \int_{\cO} \vphi'' \left( u^+_s\left( x\right) \right)\partial _i u^+_s (x) {g}_{i,s}(x) dx ds \\ &+\sum_i
\int_0^t \int_{\cO} \vphi' (u^+_s (x))
{h}_{i,s}(x) \, dx dB^i_s +\frac12\sum_{i=1}^{+\infty} \int_0^t \int_{\cO} \vphi'' \left( u^+_s\left( x\right) \right)\1_{\{u_s>0\}}\left| {h}_{i,s} (x)\right|^2 \, dx\, ds .\\
\end{split}
\end{equation}

\end{Le}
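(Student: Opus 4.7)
My plan is to reduce the problem to a setting where the It\^o formula of Theorem \ref{existence} applies directly, by localizing $u$ via cutoff functions to obtain an $\HH$-valued Dirichlet solution, and then passing to the limit after removing the cutoff.

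\textbf{Step 1 (Localization).} Take a sequence $(\phi_n)_{n\geq 1} \subset C_c^{\infty}(\cO)$ with $0\leq \phi_n\leq 1$ and $\phi_n\equiv 1$ on an exhausting family $(K_n)$ of compact subsets of $\cO$ (for instance $K_n = \{x\in\cO:\, d(x,\partial\cO)\geq 1/n\}$). By the Remark immediately preceding the lemma, $v^n:=\phi_n u$ belongs to $\HH$, satisfies the zero Dirichlet boundary condition, and solves a linear SPDE with initial datum $\phi_n\xi$ and coefficients $(\bar f^n,\bar g^n,\bar h^n)$ built from $\phi_n$, $u$ and $\nabla u$ via the explicit formulas there. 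Since $u\in\HHl$ and $\phi_n$ is compactly supported in $\cO$, these modified coefficients satisfy \textbf{(HD\#)}; Theorem \ref{existence} therefore applies to $v^n$ and yields an It\^o formula for $\int_\cO \vphi(v^n_t(x))\,dx$ for any admissible test function.

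\textbf{Step 2 (It\^o for $\vphi\circ (v^n)^+$).} Although $y \mapsto \vphi(y^+)$ is only $C^1$, it can be approximated by a sequence $\psi_\eps\in C^2(\R)$ with $\psi_\eps\equiv 0$ on $(-\infty,\eps/2]$, $\psi_\eps=\vphi$ on $[\eps,+\infty)$, smooth interpolation in between, $\|\psi_\eps''\|_\infty \leq C\|\vphi''\|_\infty$, and $\psi_\eps\to \vphi(\cdot^+)$, $\psi_\eps'\to \vphi'(\cdot)\ind_{\{\cdot>0\}}$, $\psi_\eps''\to \vphi''(\cdot)\ind_{\{\cdot>0\}}$ pointwise. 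Applying Theorem \ref{existence} point 2 to $v^n$ with $\psi_\eps$ and letting $\eps\downarrow 0$ by dominated convergence (using the uniform bound on $\psi_\eps''$ together with the $L^2$-regularity of $\nabla v^n$ and $\bar h^n$) produces the desired It\^o formula for $\int_\cO \vphi((v^n_t)^+(x))\,dx$. Note also that $\phi_n\geq 0$ entails $(v^n)^+ = \phi_n u^+$.

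\textbf{Step 3 (Removing the cutoff).} As $n\to\infty$, $(v^n)^+ = \phi_n u^+ \nearrow u^+$, and $\nabla (v^n)^+ = \phi_n\nabla u^+ + u^+\nabla\phi_n \to \nabla u^+$ in $L^2(\Omega\times[0,T]\times\cO)$: the cross term vanishes because $\nabla\phi_n$ is supported in an arbitrarily thin neighbourhood of $\partial\cO$ and $u^+\in\HH$ has null trace on $\partial\cO$ (exploiting the Lipschitz regularity of $\partial\cO$ through the trace theorem). Each term of the It\^o formula of Step 2 then converges via dominated convergence to the corresponding term involving $u^+$ and $f,g,h$ in place of $\bar f^n,\bar g^n,\bar h^n$, yielding the stated formula.

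The main difficulty lies in this last step: controlling the parasitic contributions coming from $\nabla\phi_n$ in $\bar f^n$ and $\bar g^n$ (namely $\langle\nabla\phi_n,a\nabla u\rangle$, $\langle\nabla\phi_n,g\rangle$ and $u\,a\nabla\phi_n$), which is precisely where the assumption $u^+\in\HH$, i.e.\ $u\leq 0$ on $\partial\cO$, and the Lipschitz regularity of the boundary are used in an essential way.
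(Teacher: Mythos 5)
Your proposal follows essentially the same route as the paper: localize via $v=\phi u$, apply the Dirichlet-case It\^o formula (Theorem \ref{existence}) to a smooth approximation of $y\mapsto\varphi(y^+)$, then remove the cutoff $\phi_n$. The only point to sharpen is Step 3: the vanishing of the cross term $u^+\nabla\phi_n$ in $L^2$ does not follow from the trace theorem alone (since $|\nabla\phi_n|\sim n$ blows up on the support), but from Hardy's inequality $u^+/\rho\in L^2(\cO)$ valid for $H^1_0$-functions on a Lipschitz domain, which is exactly the content of the paper's Lemma \ref{Troncation}.
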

\begin{proof}
For the moment, we consider $\phi\in C^{\infty}_c (\cO )$,  $0\leq \phi\leq 1$ and put
$$\forall t\in [0,T],\ v_t =\phi u_t .$$
By a direct calculation,
we see that the process $v$
 satisfies the following equation with $\phi \xi $ as initial
data and zero Dirichlet boundary conditions,%
$$
dv _t=\left(  \sum_{i,j=1}^d\partial_i (a_{i,j}(t,\cdot )\partial_j v_t)+\widetilde{f_t}+\sum_{i=1}^d\partial _i\widetilde{%
g_{i,t}}\right) dt+\sum_{j=1}^{+\infty}\widetilde{h_{j,t}}dB_t^j, $$
where%
$$ \widetilde{f_t}=\phi f_t
-\sum_{i,j=1}^da^{i,j}(t,\cdot)\left( \partial _i\phi \right) \left(
\partial _ju_t\right) -\sum_{i=1}^d\left( \partial _i\phi \right)
g_{i,t} , $$ $$
\widetilde{g_{i,t}}=\phi g_{i,t}
-u_t\sum_{j=1}^da^{i,j}(t,\cdot )\partial _j\phi ,i=1,...d,\;\;\widetilde{h_{j,t}}%
=\phi h_{j,t} ,j\in\Ne. $$
Let us note that 
$$ E\big[ ( \left\| \tilde{f}\right\|_{\#;t}^* ) ^2+\left\|
\tilde{g}\right\| _{2,2;t}^2+\left\| \tilde{h}\right\| _{2,2;t}^2\big] <\infty
, $$ 
so we'll be able to apply  Itô's formula (point 2. of the previous Theorem).\\
Now, we approximate the function $\psi:
y\in\R\rightarrow \vphi (y^+ )$ by a sequence $(\psi_n )_{n\in\Ne}$ of
smooth functions constructed as follows:\\
 So, let $\zeta$ be a $C^{\infty}$ increasing function such that
\[\forall y\in ]-\infty ,1],\ \zeta (y)=0 \makebox{ and }
\forall y\in [2,+\infty[,\ \zeta (y)=1.\]
 We set for all
$n\in\Ne$:
\[\forall y\in\R ,\ \psi_n (y)=\vphi (y)\zeta (ny).\]
It is easy to verify that $(\psi_n )_{n\in\Ne}$ converges
uniformly to the function $\psi$, $(\psi'_n )_n$ converges everywhere to the function $(y\mapsto \vphi' (y^+ ))$ and $(\psi''_n )_n$ converges everywhere to the function $(y\mapsto \1_{\{y>0\}}\vphi'' (y^+ ))$.  Moreover we have the
estimates:
\begin{equation}\label{appxs}\forall y\in\R_+ ,\forall n\in\Ne ,\ \ 0\leq \psi_n (y)\leq\psi
(y),\ 0\leq\psi_n^{\prime}(y)\leq Cy,\ \
\|\psi_n^{\prime\prime}(y)\|\leq C,\end{equation}
where $C$ is a constant.\\
Thanks to the previous Theorem, we have for all $n$ and all $t\geq 0 :$
\begin{equation}\label{Itopsin}
\begin{split}
\int_{{\cal O}}\psi_n &\left(v_t\left( x\right) \right)  dx+  \sum_{i,j=1}^d\int_0^t\int_{\cO}
a_{i,j} (s,x)\psi_n^{\prime\prime }\left( v_s (x)\right )\partial_i v_s (x) \partial_j v_s (x) dx ds  =\int_{{\cal O}
}\psi_n\left(\phi (x)\xi
\left( x\right) \right) dx\\&+
\int_0^t \int_{\cO}
\psi_n ^{\prime } \left(v_s (x) \right) \, \widetilde{f}_s (x)  \, dx \, ds
 -\sum_{i=1}^d\int_0^t \int_{\cO}  \psi_n^{\prime\prime
}\left( v_s (x) \right) \partial _i v_s (x) \widetilde{g}_{i,s}(x) dx ds \\ &+\sum_i
\int_0^t \int_{\cO} \psi_n^{\prime }(v_s(x))
\widetilde{h}_{i,s}(x) \, dx dB^i_s +\frac12\sum_{i=1}^{+\infty} \int_0^t \int_{\cO} \psi_n^{\prime \prime } (v_s(x)) \left| \widetilde{h}_{i,s} (x)\right|^2 \, dx\, ds .\\
\end{split}
\end{equation}
Let us remark that $v\in\mathcal{H}$ so that thanks to estimates \eqref{appxs}, each term in the previous equality is well defined and even dominated in $L^1$. Let us focus on the particular term $\int_0^t \int_{\cO}
\psi_n ^{\prime } \left(v_s (x) \right) \, \widetilde{f}_s (x)  \, dx \, ds
$. We have for all $n\in\Ne$:
$$|\psi_n ^{\prime } \left(v_s  \right) \, \widetilde{f}_s| \leq C |v_s | |\widetilde{f}_s|,$$
and as $v\in \mathcal{H}\bigcap L_{\#;t}$ and $\widetilde{f}\in L^*_{\#;t}$ the Hölder inequality \eqref{Holder2} ensures that $|v \widetilde{f}|$ belongs to $L^1 (\Omega \times [0,T]\times \cO )$. The other terms being easier to dominate,  by the dominated convergence Theorem
 and using the fact that
$\1_{\{v_s>0\}}\partial_i v_s =\partial_i v^+_s ,$ we get as $n$ tends to $+\infty$:
\begin{equation}{\label{Iton}}
\begin{split}
\int_{{\cal O}}&\vphi \left( v^+_t\left( x\right) \right) dx+ \sum_{i,j=1}^d \int_0^t\int_{\cO}
\vphi'' \left( v^+_s\left( x\right) \right)a_{i,j} (s,x)\partial_i v^+_s (x) \partial_j v^+_s (x) dx ds  =\int_{{\cal O}
}\vphi\left(\phi (x)\xi^+
\left( x\right) \right)dx\\&+
\int_0^t \int_{\cO}
\vphi' (v^+_s (x) ) \, \widetilde{f}_s (x)  \, dx \, ds
 -\sum_{i=1}^d\int_0^t \int_{\cO} \vphi'' \left( v^+_s\left( x\right) \right)\partial _i v^+_s (x) \widetilde{g}_{i,s}(x) dx ds \\ &+\sum_i
\int_0^t \int_{\cO} \vphi' (v^+_s (x))
\widetilde{h}_{i,s}(x) \, dx dB^i_s +\frac12\sum_{i=1}^{+\infty} \int_0^t \int_{\cO} \vphi'' \left( v^+_s\left( x\right) \right)\1_{\{v_s>0\}}\left| \widetilde{h}_{i,s} (x)\right|^2 \, dx\, ds .\\
\end{split}
\end{equation}
Consider now a sequence $(\phi_n )_n$ of non-negative functions in $C_0^{\infty} (\cO)$, $0\leq \phi_n\leq 1$  $\forall n\in\Ne$ converging to $1$ everywhere on $\cO$ and such that for any $w\in H_0^1 (\cO )$ the sequence
$(\phi_n  w)_n$ tends to $w$ in $H_0^1 (\cO )$ and
$$\sup_n \| \phi_n w\|_{H^1_0 (\cO)}\leq C \| w\|_{H^1_0 (\cO)},$$
where $C$ is a constant which does not depend on $w$.\\The existence of such a sequence is proved in the Appendix, Lemma \ref{Troncation}.\\
Let us remark that if $i\in \{1,\cdots ,d\}$ and $w\in H^1_0 (\cO)$, then $(w\partial_i \phi_n)_n$ tends to $0$ in $L^2 (\cO)$.\\
We set $v_n =\phi_n u$ and
$$ \widetilde{f^n_t}=\phi_n f_t
-\sum_{i,j=1}^d a^{i,j}(t,\cdot)\left( \partial _i\phi_n \right) \left(
\partial _ju_t\right) -\sum_{i=1}^d\left( \partial _i\phi_n \right)
g_{i,t} , $$ $$
\widetilde{g^n_{i,t}}=\phi_n g_{i,t}
-u_t\sum_{j=1}^d a^{i,j}(t,\cdot )\partial _j\phi_n ,i=1,...d,\;\;\widetilde{h^n_{j,t}}%
=\phi_n h_{j,t} ,j\in\Ne. $$
We now apply relation \eqref{Iton} to $v_n$ and get
\begin{equation}{\label{Itonn}}
\begin{split}
\int_{{\cal O}}&\vphi \left( v^+_{n,t}\left( x\right) \right) dx+ \sum_{i,j=1}^d \int_0^t\int_{\cO}
\vphi'' \left( v^+_{n,s}\left( x\right) \right)a_{i,j} (s,x)\partial_i v^+_{n,s} (x) \partial_j v^+_{n,s} (x) dx ds  =\int_{{\cal O}
}\vphi\left(\phi_n (x)\xi^+
\left( x\right) \right)dx\\&+
\int_0^t \int_{\cO}
\vphi' (v^+_{n,s} (x) ) \, \widetilde{f}_s^n (x)  \, dx \, ds
 -\sum_{i=1}^d\int_0^t \int_{\cO} \vphi'' \left( v^+_{n,s}\left( x\right) \right)\partial _i v^+_s (x) \widetilde{g}_{i,s}^n (x) dx ds \\ &+\sum_i
\int_0^t \int_{\cO} \vphi' (v^+_{n,s} (x))
\widetilde{h}_{i,s}^n (x) \, dx dB^i_s +\frac12\sum_{i=1}^{+\infty} \int_0^t \int_{\cO} \vphi'' \left( v^+_{n,s}\left( x\right) \right)\1_{\{v_{n,s}>0\}}\left| \widetilde{h}_{i,s}^n (x)\right|^2 \, dx\, ds .\\
\end{split}
\end{equation}
We have
\begin{eqnarray*}
\varphi' (v^+_{n,s})   \widetilde{f}_s^n -\sum_{i=1}^d  \varphi'' (v^+_{n,s})\partial _i v^+_{n,s}  \widetilde{g^n}_{i,s}&=& \varphi' (v^+_{n,s}) \phi_n f_s-\sum_{i,j} a_{i,j}(s)\varphi' (v^+_{n,s} )\partial_j\phi_n\partial_i u^+_s \\&& \hspace{-5cm}+\sum_{i,j}a_{i,j}(s)\varphi'' (v^+_{n,s})u^+_s \partial_i v^+_{n,s} \partial_j \phi_n  -\sum_i (\varphi' (v^+_{n,s}) g_{i,s}\partial_i \phi +  \varphi'' (v^+_{n,s})\phi_n g_{i,s}\partial_i v^+_{n,s}).
\end{eqnarray*}
By remarking for example that for all $s\in (0,T]$ $(\phi_n \varphi' (v^+_{n,s})_n$ (resp. $(\partial_i \phi_n \varphi'(v^+_{n,s}))_n$) tends to $(\varphi' (u^+_s ))$ (resp. $0$)
in $H^1_0 (\cO)$ (resp. in $L^2 (\cO )$) we conclude, thanks to the dominated convergence Theorem, by  making $n$ tend to $+\infty$ in \eqref{Itonn}.
\end{proof}

\subsection{A comparison Theorem}
By applying the previous It\^{o}'s formula for  $\varphi (x)=x^2$, as in Theorem \ref{existence}, the above Proposition leads to the following generalization of the
estimate of the positive part:
\begin{Cor}
\label{positive-estimate}
Under the hypotheses of Lemma \ref{lemme14} with same notations, one has the following estimate: %
$$ E\left( \left\| u^{+}\right\| _{2,\infty ;t}^2+\left\| \nabla
u^{+}\right\| _{2,2;t}^2\right) \le k\left( t\right) E\left( \left\|
\xi ^{+}\right\| _2^2+\left( \left\| f^{u,0+}\right\|
^*_{\#;t}\right) ^2+\left\| g^{u,0}\right\| _{2,2;t}^2+\left\|
h^{u,0}\right\| _{2,2;t}^2\right) . $$
\end{Cor}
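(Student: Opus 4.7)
My plan is to reproduce the argument used for point 3 of Theorem \ref{existence}, now available in the local setting thanks to the It\^o formula of Lemma \ref{lemme14}. I would apply that formula with $\vphi(x)=x^2$, admissible since $\vphi ''\equiv 2$ is bounded and $\vphi(0)=\vphi'(0)=0$. This produces an identity for $\|u_t^+\|^2 + 2\int_0^t\sum_{i,j}\int_{\cO} a_{i,j}(s,x)\partial_i u_s^+\, \partial_j u_s^+\,dxds$ in terms of $\|\xi^+\|^2$, the drift integrals $2\int_0^t(u_s^+,f_s)\,ds$ and $-2\sum_i\int_0^t(\partial_i u_s^+,g_{i,s})\,ds$, a martingale driven by $h$, and the quadratic-variation integral $\int_0^t\int_{\cO}\1_{\{u_s>0\}}|h_s|^2\,dxds$. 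Ellipticity bounds the second left-hand term below by $2\lambda\|\nabla u^+\|^2_{2,2;t}$.

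Next I would decompose $f=(f-f^0)+f^0$ and analogously for $g$ and $h$. Since $u^+$ and $\nabla u^+=\1_{\{u>0\}}\nabla u$ vanish on $\{u\le 0\}$, the $f^0,g^0,h^0$ contributions amount exactly to those of $f^{u,0},g^{u,0},h^{u,0}$ in the sense of \eqref{f0+}. The Lipschitz hypothesis {\bf (H)} provides, on $\{u>0\}$, pointwise bounds $|f-f^0|\le C(|u^+|+|\nabla u^+|)$, $|g-g^0|\le C|u^+|+\alpha|\nabla u^+|$, and $|h-h^0|^2\le (C|u^+|+\beta|\nabla u^+|)^2$. Cauchy-Schwarz together with Young's inequality $2ab\le \eta a^2+\eta^{-1}b^2$, expanding the quadratic in $h$ via $(1+\delta)\beta^2|\nabla u^+|^2+C_\delta|u^+|^2$, controls the total gradient contribution on the right by $(2\alpha+\beta^2+\varepsilon)\|\nabla u^+\|^2_{2,2;t}$ for $\varepsilon$ arbitrarily small, plus lower-order $\|u^+\|^2$ terms.

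The source piece is handled by the duality \eqref{Holder2}: since $u^+\ge 0$, $\int_0^t(u_s^+,f^{u,0}_s)\,ds\le \int_0^t(u_s^+,f^{u,0+}_s)\,ds\le \|u^+\|_{\#;t}\|f^{u,0+}\|^*_{\#;t}$, and the Sobolev inequality gives $\|u^+\|_{\#;t}^2\le \|u^+\|^2_{2,\infty;t}+c_S^2\|\nabla u^+\|^2_{2,2;t}$, so a further Young's inequality turns this into absorbable terms plus $\varepsilon^{-1}(\|f^{u,0+}\|^*_{\#;t})^2$. The martingale term is handled by Burkholder-Davis-Gundy exactly as in Proposition \ref{Ito1}, producing $\varepsilon E\|u^+\|^2_{2,\infty;t}+C_\varepsilon E\int_0^t\||h_s|\|^2\,ds$, whose $h$-integrand is split one last time into its Lipschitz piece (reabsorbed) and the free $\|h^{u,0}\|^2_{2,2;t}$. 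Taking the supremum over $s\in[0,t]$ and then expectation, a Gronwall argument on the surviving $E\|u^+\|^2_{2,\infty;s}$ closes the estimate.

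The delicate step is the bookkeeping of the gradient coefficient: the contributions from the $g$ integral and from $\int\1_{\{u>0\}}|h|^2\,dxds$ together threaten to exceed the $2\lambda$ budget on the left. It is precisely the contraction condition $\alpha+\beta^2/2<\lambda$, equivalently $2\alpha+\beta^2<2\lambda$, that provides enough slack to absorb $(2\alpha+\beta^2+\varepsilon)\|\nabla u^+\|^2_{2,2;t}$ strictly into $2\lambda\|\nabla u^+\|^2_{2,2;t}$ for $\varepsilon,\delta$ small. Once this is set up properly, everything else reduces to the same bookkeeping as in the proof of point 3 of Theorem \ref{existence}.
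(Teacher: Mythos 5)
Your proposal is correct and follows essentially the same route as the paper, which simply applies the It\^o formula of Lemma \ref{lemme14} with $\varphi(x)=x^2$ and repeats the bookkeeping of point 3 of Theorem \ref{existence}. Your identification of the contraction condition $\alpha+\beta^2/2<\lambda$ as the source of the slack needed to absorb the gradient contributions is exactly the key point.
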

The key point of the proof of the maximum principle is the following comparison Theorem which is an immediate consequence of the previous estimate:

\begin{Th}{\label{comparison}} Assume that $\partial O $ is Lipschitz.
 Let  $f^1,$ $f^2$ be two functions similar to $f$ which
satisfy the
Lipschitz condition \textbf{(H)-(i)}, $g$ (resp. $h$) satisfies \textbf{(H)-(ii)} (resp. \textbf{(H)-(iii)} and assume that both triples $\left( f^1,g,h\right) $ and $%
\left( f^2,g,h\right) $ satisfy \textbf{(HD\#)}. Let $\xi
^1,\xi ^2$ two random variables similar to $\xi $
satisfying \textbf{(HI)}. Let $u^i\in {\cal U}_{loc}\left( \xi
^i,f^i,g,h\right) ,i=1,2$ and suppose that the process $\left(
u^1-u^2\right) ^{+}$ belongs to $\HH
$ and that one has%
$$ E\left( \left\| f^1\left(.,., u^2,\nabla u^2\right) -f^2\left(.,.,
u^2,\nabla u^2\right) \right\| ^*_{\#;t}\right) ^2<\infty ,\;\;
\mbox{ for all} \quad t\ge 0.$$ If $\xi ^1\le \xi ^2$ a.s. and
$f^1\left(t,\omega, u^2,\nabla u^2\right) \le f^2\left(t,\omega,
u^2,\nabla u^2\right) $,  $dt\otimes dx \otimes dP$-a.e., then one
has $$u^1 (t,x)\le u^2 (t,x) \quad\ \ dt \otimes dx\otimes P\makebox{-a.e.}$$
\end{Th}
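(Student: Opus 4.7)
The plan is to reduce the comparison assertion to an estimate on the positive part of a suitable auxiliary local solution, and then apply Corollary \ref{positive-estimate} to conclude that this positive part vanishes. Set $u = u^1 - u^2$ and $\xi = \xi^1 - \xi^2$. The first step is to verify that $u$ is a local weak solution of a quasilinear SPDE of the form \eqref{e1}, driven by new coefficients defined, for every $(t,\omega,x,y,z)$, as
\begin{equation*}
\begin{split}
\tilde f(t,\omega,x,y,z) &= f^1\bigl(t,\omega,x,u^2_t(x)+y,\nabla u^2_t(x)+z\bigr) - f^2\bigl(t,\omega,x,u^2_t(x),\nabla u^2_t(x)\bigr),\\
\tilde g_i(t,\omega,x,y,z) &= g_i\bigl(t,\omega,x,u^2_t(x)+y,\nabla u^2_t(x)+z\bigr) - g_i\bigl(t,\omega,x,u^2_t(x),\nabla u^2_t(x)\bigr),\\
\tilde h_i(t,\omega,x,y,z) &= h_i\bigl(t,\omega,x,u^2_t(x)+y,\nabla u^2_t(x)+z\bigr) - h_i\bigl(t,\omega,x,u^2_t(x),\nabla u^2_t(x)\bigr).
\end{split}
\end{equation*}
This is just a direct subtraction at the level of the weak formulation \eqref{weak2} satisfied by $u^1$ and $u^2$, using that the second-order part is linear in $u$ and common to both equations.

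The second step is to check that $\tilde f,\tilde g,\tilde h$ satisfy \textbf{(H)} with the same constants $C,\alpha,\beta$ as $f^1,g,h$ (hence the contraction condition \textbf{(H)-(iv)} still holds), and to identify the zero terms:
\begin{equation*}
\tilde f^0(t,x) = f^1(t,x,u^2,\nabla u^2) - f^2(t,x,u^2,\nabla u^2),\quad \tilde g^0 \equiv 0,\quad \tilde h^0 \equiv 0.
\end{equation*}
The integrability assumption explicitly imposed in the statement ensures $E(\|\tilde f^0\|^{*}_{\#;t})^2<\infty$, while the vanishing of $\tilde g^0,\tilde h^0$ takes care of \textbf{(HD\#)} trivially. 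The hypothesis $\xi^1\le\xi^2$ gives $\xi^+ = 0$ in $L^2(\mathcal O)$, and by hypothesis $f^1(\cdot,\cdot,u^2,\nabla u^2)\le f^2(\cdot,\cdot,u^2,\nabla u^2)$ so $\tilde f^0 \le 0$, whence
\begin{equation*}
\tilde f^{u,0+} = \mathbf{1}_{\{u>0\}}(\tilde f^0 \vee 0) = 0 \quad dt\otimes dx\otimes dP\text{-a.e.}
\end{equation*}
Moreover the assumption $u^+ = (u^1-u^2)^+ \in \mathcal H$ says precisely that $u$ is non-positive on $\partial\mathcal O$ in the sense of Definition \ref{Def3}, which is exactly the hypothesis needed to invoke Lemma \ref{lemme14} and hence Corollary \ref{positive-estimate} for this auxiliary local solution.

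Applying Corollary \ref{positive-estimate} to $u$ with the coefficients $(\tilde f,\tilde g,\tilde h)$ and initial datum $\xi$ then yields
\begin{equation*}
E\bigl(\|u^{+}\|_{2,\infty;t}^{2}+\|\nabla u^{+}\|_{2,2;t}^{2}\bigr)\le k(t)\,E\bigl(\|\xi^{+}\|_{2}^{2}+(\|\tilde f^{u,0+}\|^{*}_{\#;t})^{2}+\|\tilde g^{u,0}\|_{2,2;t}^{2}+\|\tilde h^{u,0}\|_{2,2;t}^{2}\bigr)=0,
\end{equation*}
so that $u^{+}=0$, i.e.\ $u^1\le u^2$ a.e., which is the claim. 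The main technical obstacle is the first step: showing that $u$ is genuinely a local weak solution associated to the shifted coefficients requires one to subtract the two weak formulations against an arbitrary $\varphi\in\mathcal D$ and rearrange the nonlinear terms using the shifted arguments, and to check that the resulting coefficients fall into the framework of Section \ref{preliminaries} (in particular that the shifted coefficients are predictable random functions with the right Lipschitz structure and that the integrability of $\tilde f^0$ granted by the hypothesis is sufficient). Once this bookkeeping is done, the rest is an application of the already-established positive-part estimate.
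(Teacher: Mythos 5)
Your proposal is correct and follows exactly the route the paper intends: the paper presents Theorem \ref{comparison} as an ``immediate consequence'' of Corollary \ref{positive-estimate}, and the content of that reduction is precisely your construction — write $u=u^1-u^2$ as a local solution for the shifted coefficients $\tilde f,\tilde g,\tilde h$, observe that $\tilde g^0=\tilde h^0=0$, $\xi^+=0$ and $\tilde f^{u,0+}=0$ under the ordering hypotheses, and conclude from the positive-part estimate that $u^+=0$. The bookkeeping you flag (predictability and Lipschitz structure of the shifted coefficients, and the integrability of $\tilde f^0$ supplied by the hypothesis) is exactly what makes Corollary \ref{positive-estimate} applicable, so nothing is missing.
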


\subsection{The maximum principle}
As in Subsection \ref{supnorm}, we work under assumptions (\textbf{HD$\theta p$)} and \textbf{(HI$\infty p$)}. The following property has been proved in \cite{DMS2}, Lemma 2:

$$\left\| u\right\| _{1,1;T}\le c\left\| u\right\| _{\theta ;T}^{*},$$
for some constant $c>0$. As a consequence, {\bf(HD$\theta p$)}is stronger than {\bf(HD\#)}.

We first consider the case of a solution $u$ such that $u\leq 0$ on $\partial \cO$.

\begin{Th} Assume that $\partial O$ is Lipschitz, that
 {\bf (H)}, (\textbf{HD$\theta p$)}, \textbf{(HI$\infty p$)} hold
for some $\theta \in [0,1[$, $p\ge 2,$ and that the constants of the
Lipschitz conditions satisfy $\alpha +\frac{\beta ^2}2+72\beta
^2<\lambda $. Let $u\in {\cal U}_{loc}\left( \xi ,f,g,h\right) $ be
such that
$u^{+}\in \HH.$  Then one has%
$$ E\left\| u^{+}\right\| _{\infty ,\infty ;t}^p\le k\left(
t\right) E\left( \left\| \xi ^{+}\right\| _\infty ^p+\left( \left\|
f^{0,+}\right\| _{\theta ;t}^{*}\right) ^p+\left( \left\| \left|
g^0\right| ^2\right\| _{\theta ;t}^{*}\right) ^{\frac p2}+\left(
\left\| \left| h^0\right| ^2\right\| _{\theta ;t}^{*}\right) ^{\frac
p2}\right) , $$ where $k\left( t\right) $ is constant that depends
of the structure constants and $t\ge 0.$
\end{Th}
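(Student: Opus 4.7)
My plan is to reduce the claim to the $L^p$-estimate of Theorem \ref{Lpestimate} (which is available only for solutions in $\HH$ with null Dirichlet condition) via the comparison Theorem \ref{comparison}: I shall construct a solution $v\in\HH$ which dominates $u$, apply Theorem \ref{Lpestimate} to $v$, and transfer the bound to $u^+$.

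To this end set $\tilde f(t,\omega,x,y,z) := f(t,\omega,x,y,z) + (f^0(t,\omega,x))^-$, so that $\tilde f \geq f$ pointwise, the Lipschitz condition \textbf{(H)-(i)} is preserved (the $(y,z)$-dependence is unchanged), and the ``free'' part satisfies $\tilde f^{\,0} = (f^0)^+ = f^{0,+}$. Keeping $g$, $h$ and the boundary condition unchanged while replacing the initial datum by $\xi^+$, \textbf{(HI$\infty p$)} guarantees \textbf{(HI2)} for $\xi^+$, and \textbf{(HD$\theta p$)} for $(f,g,h)$ yields the same condition for $(\tilde f,g,h)$ via the pointwise bound $|f^{0,+}|\leq |f^0|$ and monotonicity of the dual norm on non-negative functions. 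Hence Theorem \ref{ExistenceL2} (equivalently Theorem \ref{existence}, since \textbf{(HD$\theta p$)} is stronger than \textbf{(HD\#)}) produces a unique $v:=\cU(\xi^+,\tilde f,g,h)\in\HH$, and applying Theorem \ref{Lpestimate} to $v$ gives exactly
\[
E\|v\|_{\infty,\infty;t}^p \leq k(t) E\Big(\|\xi^+\|_\infty^p + (\|f^{0,+}\|_{\theta;t}^*)^p + (\||g^0|^2\|_{\theta;t}^*)^{p/2} + (\||h^0|^2\|_{\theta;t}^*)^{p/2}\Big),
\]
which is already the desired right-hand side.

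It remains to establish $u\leq v$ almost everywhere, for then $u^+\leq v^+\leq \|v\|_{\infty,\infty;t}$. I invoke Theorem \ref{comparison} with $(\xi^1,f^1)=(\xi,f)$ and $(\xi^2,f^2)=(\xi^+,\tilde f)$: the initial inequality $\xi\leq\xi^+$ is automatic; by construction $f(t,x,v(x),\nabla v(x))\leq \tilde f(t,x,v(x),\nabla v(x))$; and the integrability of $f^1(\cdot,v,\nabla v)-f^2(\cdot,v,\nabla v)=-(f^0)^-$ in $L^*_{\#;t}$ follows from the embedding between the $\theta$- and $\#$-norms recorded just before the statement ($\|\cdot\|_{1,1;t}\leq c\|\cdot\|_{\theta;t}^*$) together with $|(f^0)^-|\leq |f^0|$.

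The one genuinely delicate point is the verification of the hypothesis $(u-v)^+\in\HH$ needed by Theorem \ref{comparison}. Since $v\in\HH$ vanishes on $\partial\cO$ while $u^+\in\HH$ encodes $u\leq 0$ on $\partial\cO$ in the sense of Definition \ref{Def3}, one obtains $u-v\leq 0$ on $\partial\cO$, so that $(u-v)^+$ has zero trace in the $H^1_0(\cO)$ sense; combined with the pointwise bound $(u-v)^+\leq u^+ + |v|$ and the fact that $u^+$ and $v$ both lie in $\HH$ (so both terms are controlled in $L^2(\Omega;L^\infty_t L^2_x\cap L^2_t H^1_x)$), this yields $(u-v)^+\in\HH$, and assembling all pieces gives the required estimate.
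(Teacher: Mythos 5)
Your proposal is correct and follows essentially the same route as the paper: both define $\widehat{f}=f+f^{0,-}$ (so that $\widehat{f}^{\,0}=f^{0,+}$), set $v=\mathcal{U}(\xi^{+},\widehat{f},g,h)$, apply Theorem \ref{Lpestimate} to $v$, and then use the comparison Theorem \ref{comparison} to get $u\le v$, hence $u^{+}\le v^{+}$. The only place you go beyond the paper is the verification that $(u-v)^{+}\in\HH$ (which the paper simply asserts); note that your pointwise bound $(u-v)^{+}\le u^{+}+|v|$ controls only the $L^{2}(\cO)$ part of the $\HH$-norm and not the gradient term $1_{\{u>v\}}\nabla u$, so that particular step would still need a separate argument.
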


\begin{proof}

Set $v={\cal U}\left( \xi ^{+},\widehat{f},g,h\right) $ the
solution with zero Dirichlet boundary conditions, where the
function $\widehat{f}$ is defined by $\widehat{f}=f+f^{0,-},$ with
$f^{0,-}=0\vee \left( -f^0\right) .$ The assumption on the
Lipschitz constants ensure the applicability of Theorem \ref{Lpestimate}, which gives the estimate%
$$ E\left\| v\right\| _{\infty ,\infty ;t}^p\le k\left( t\right)
E\left( \left\| \xi ^{+}\right\| _\infty ^p+\left( \left\|
f^{0,+}\right\| _{\theta ;t}^{*}\right) ^p+\left( \left\| \left|
g^0\right| ^2\right\| _{\theta ;t}^{*}\right) ^{\frac p2}+\left(
\left\| \left| h^0\right| ^2\right\| _{\theta ;t}^{*}\right) ^{\frac
p2}\right) , $$ because $\widehat{f}^0=f^{0,+}.$ Then $\left(
u-v\right) ^{+}\in \HH $ and we observe that all the conditions of
the
preceding theorem are satisfied so that we may apply it and deduce that $%
u\le v.$ This implies $u^{+}\le v^{+}$ and the above estimate of
$v$ leads to the asserted estimate.
\end{proof}

Let us generalize the previous result by considering a real It\^{o} process of the form%
$$ M_t=m+\int_0^tb_sds+\sum_{j=1}^{+\infty}\int_0^t\sigma
_{j,s}dB_s^j, $$
where $m$ is a real random variable and $b=\left( b_t\right) _{t\ge 0},$ $%
\sigma =\left( \sigma _{1,t},\cdots ,\sigma _{n,t}\cdots\right) _{t\ge 0}$
are adapted processes.

\begin{Th}
\label{maintheo}
 Assume {\bf (H)}, (\textbf{HD$\theta p$)}, \textbf{(HI$\infty p$)}
for some $\theta \in [0,1[$, $p\ge 2,$ and that the constants of the
Lipschitz conditions satisfy $\alpha +\frac{\beta ^2}2+72\beta
^2<\lambda $. Assume also that $m$ and the processes $b$ and $\sigma
$
satisfy the following integrability conditions%
$$ E\left| m\right| ^p<\infty ,\;E\left( \int_0^t\left| b_s\right|
^{\frac 1{1-\theta }}ds\right) ^{p\left( 1-\theta \right) }<\infty
,\;E\left(
\int_0^t\left| \sigma _s\right| ^{\frac 2{1-\theta }}ds\right) ^{\frac{%
p\left( 1-\theta \right) }2}<\infty , $$ for each $t\ge 0.$ Let
$u\in {\cal U}_{loc}\left( \xi ,f,g,h\right) $ be such that $\left(
u-M\right) ^{+}$ belongs to $\HH .$
Then one has%
\begin{equation*}
\begin{split}
E\left\| \left( u-M\right) ^{+}\right\| _{\infty ,\infty ;t}^p & \le k\left(
t\right) E\Big[ \left\| \left( \xi -m\right) ^{+}\right\| _\infty ^p +\left(
\left\| \Big(f (\cdot,\cdot,M,0) - b \Big)^+ \, \right\|
_{\theta;t}^{*}\right) ^p \, + \, \\
& \quad \quad \quad  \left( \left\| \big| g(\cdot,\cdot,M,0)\big|^2\right\|
_{\theta ;T}^{*}\right)^{\frac p2}
 +\left( \left\| \left| h (\cdot,\cdot,M,0) - \sigma \right| ^2\right\|_{\theta ;T}^{*}\right)^{\frac{p}{2}} \, \Big]\\
\end{split}
\end{equation*}
 where $k\left( t\right) $ is the constant from the
preceding corollary. \end{Th}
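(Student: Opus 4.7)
The strategy is a reduction to the preceding theorem (the $M \equiv 0$ case) by translating the unknown. Set $w_t(x) := u_t(x) - M_t$. Since $M$ does not depend on $x$, one has $\nabla w = \nabla u$ and the second-order form is unchanged. Writing $u_s = w_s + M_s$ in the nonlinearities and introducing
\begin{equation*}
\tilde f(s,x,y,z) = f(s,x,y+M_s,z) - b_s, \quad \tilde g_i(s,x,y,z) = g_i(s,x,y+M_s,z), \quad \tilde h_i(s,x,y,z) = h_i(s,x,y+M_s,z) - \sigma_{i,s},
\end{equation*}
one should check that $w \in \mathcal{U}_{loc}(\xi - m, \tilde f, \tilde g, \tilde h)$. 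Concretely, I would plug $u = w + M$ into the weak formulation \eqref{weak2} satisfied by $u$, and use the It\^o product rule for the spatially constant process $M_t$ against any test function $\varphi \in \mathcal{D}$, namely
\begin{equation*}
(m,\varphi_0) + \int_0^\infty (M_s,\partial_s\varphi_s)\,ds + \int_0^\infty (b_s,\varphi_s)\,ds + \sum_{i}\int_0^\infty(\sigma_{i,s},\varphi_s)\,dB^i_s = 0,
\end{equation*}
to rewrite the identity as the weak equation for $w$ with initial data $\xi - m$ and coefficients $\tilde f, \tilde g, \tilde h$.

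Next I would verify that the new triple $(\tilde f,\tilde g,\tilde h)$ satisfies assumption \textbf{(H)} with the same Lipschitz constants $C,\alpha,\beta$ (translation in $y$ does not affect Lipschitz constants, and $b_s,\sigma_s$ do not depend on $(y,z)$), so in particular the contraction condition $\alpha + \beta^2/2 + 72\beta^2 < \lambda$ is preserved. The zero-input coefficients are $\tilde f^0(s,x) = f(s,x,M_s,0) - b_s$, $\tilde g^0(s,x) = g(s,x,M_s,0)$, $\tilde h^0(s,x) = h(s,x,M_s,0) - \sigma_s$.

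The real work is checking that these new data satisfy \textbf{(HI$\infty p$)} and \textbf{(HD$\theta p$)}. For the initial condition this is immediate: $\|\xi - m\|_\infty \leq \|\xi\|_\infty + |m|$ and both are in $L^p(\Omega)$. For $\tilde f^0, \tilde g^0, \tilde h^0$ one uses the Lipschitz assumption \textbf{(H)} to estimate
\begin{equation*}
|\tilde f^0|\leq |f^0| + C|M_s| + |b_s|,\quad |\tilde g^0|\leq |g^0| + C|M_s|,\quad |\tilde h^0|^2 \leq 3(|h^0|^2 + C^2|M_s|^2 + |\sigma_s|^2),
\end{equation*}
and then bounds $\|M\|_\infty \leq |m| + \int_0^t|b_s|\,ds + \sup_{s\leq t}|\int_0^s \sigma\cdot dB|$, combined with the given integrability of $m, b, \sigma$ and a Burkholder estimate for the martingale part, to show the $\|\cdot\|_{\theta;t}^*$ norms have finite $p$-th (resp.\ $p/2$-th) moments. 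Here I expect the main obstacle: carefully tracking the interpolation norm $\|\cdot\|_{\theta;t}^*$ applied to the processes $b$ and $|\sigma|^2$, and showing that the integrability exponents $1/(1-\theta)$ and $2/(1-\theta)$ hypothesized on $b$ and $\sigma$ are exactly what is needed so that each of the terms appearing lies in the required norm with finite $p$-moment.

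Finally, since $w^+ = (u-M)^+ \in \mathcal{H}$ by hypothesis, the preceding theorem applies to $w$ and yields
\begin{equation*}
E\|w^+\|_{\infty,\infty;t}^p \leq k(t)\, E\!\left( \|(\xi-m)^+\|_\infty^p + (\|\tilde f^{0,+}\|_{\theta;t}^*)^p + (\||\tilde g^0|^2\|_{\theta;t}^*)^{p/2} + (\||\tilde h^0|^2\|_{\theta;t}^*)^{p/2}\right),
\end{equation*}
and substituting the formulas for $\tilde f^0, \tilde g^0, \tilde h^0$ gives exactly the asserted bound.
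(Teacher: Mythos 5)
Your reduction $w=u-M$ with the translated coefficients $\tilde f,\tilde g,\tilde h$ is exactly the argument the paper intends (it omits the proof, deferring to the analogous reduction in \cite{DMS2}): the spatially constant It\^o process leaves the divergence-form operator and the Lipschitz constants untouched, the hypothesized exponents $\frac1{1-\theta}$ and $\frac2{1-\theta}$ on $b$ and $\sigma$ are precisely the $L^{\infty,\frac1{1-\theta}}$ memberships needed to place $b$, $|M|$, $|M|^2$ and $|\sigma|^2$ in $L^*_{\theta;t}$ with the required moments, and the preceding theorem applied to $w$ gives the stated bound. This is correct and essentially the same route as the paper.
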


\begin{Rq}The right hand side of this estimate is
dominated by the following quantity which is expressed directly in
terms of the characteristics of the process $M$,
\begin{equation*}
\begin{split}
&k\left( t\right) E \,\Big[ \left\| \left( \xi -m\right)
^{+}\right\| _\infty ^p+\left| m\right| ^p+\left( \left\|
f^{0,+}\right\| _{\theta ;t}^{*}\right) ^p+\left( \left\| \left|
g^0\right| ^2\right\| _{\theta ;T}^{*}\right) ^{\frac p2} +\left(
\left\| \left| h^0\right| ^2\right\| _{\theta ;T}^{*}\right) ^{\frac
p2}
\\
& +\left( \int_0^t\left| b_s\right| ^{\frac 1{1-\theta }}ds\right)
^{p\left( 1-\theta \right) }+\left( \int_0^t\left| \sigma _s\right|
^{\frac 2{1-\theta }}ds\right) ^{\frac{p\left( 1-\theta \right)
}2}\, \Big].
\end{split}
\end{equation*}
\end{Rq}

\section{Appendix}{\label{Appendix}}
\subsection{Functional spaces}\label{lpq}

We just recall the main definitions, all the details may be found in \cite{DMS1} and \cite{DMS2}.\\
Let $\left( p_1,q_1\right) ,\left( p_2,q_2\right) \in \left[
1,\infty
\right] ^2$ be fixed and set%
$$ I=I\left( p_1,q_1,p_2,q_2\right) :=\left\{ \left( p,q\right)
\in \left[ 1,\infty \right] ^2/\;\exists \;\rho \in \left[
0,1\right] s.t.\right. $$ $$ \left. \frac 1p=\rho \frac
1{p_1}+\left( 1-\rho \right) \frac 1{p_2},\frac 1q=\rho \frac
1{q_1}+\left( 1-\rho \right) \frac 1{q_2}\right\} . $$ This means
that the set of inverse pairs $\left( \frac 1p,\frac 1q\right) ,$
$(p,q)$
 belonging to $I,$ is a segment contained in the square $%
\left[ 0,1\right] ^2,$ with the extremities $\left( \frac
1{p_1},\frac 1{q_1}\right) $ and $\left( \frac 1{p_2},\frac
1{q_2}\right) .$ \\
We introduce:
$$ L_{I;t}=\bigcap_{\left( p,q\right) \in I}L^{p,q}\left( \left[
0,t\right] \times {\cal O}\right) . $$ We know that this space  coincides with the intersection of the extreme spaces,%
$$ L_{I;t}=L^{p_1,q_1}\left( \left[ 0,t\right] \times {\cal
O}\right) \cap L^{p_2,q_2}\left( \left[ 0,t\right] \times {\cal
O}\right) $$
and that it is a Banach space with the following norm%
$$ \left\| u\right\| _{I;t}:=\left\| u\right\| _{p_1,q_1;t}\vee
\left\| u\right\| _{p_2,q_2;t}. $$
we also need  the algebraic sum%
$$ L^{I;t}:=\sum_{\left( p,q\right) \in I}L^{p,q}\left( \left[
0,t\right] \times {\cal O}\right) .$$ It is a normed vector space with the norm%
$$ \left\| u\right\|^{I;t} :=\,\inf \left\{ \sum_{i=1}^n\left\|
u_i\right\| _{r_i,s_i;\,t}\,/\;u=\sum_{i=1}^nu_i,u_i\in
L^{r_i,s_i}\left( \left[ 0,t\right] \times {\cal O}\right) ,\,
\left( r_i,s_i\right) \in I,\,i=1,...n;\,n\in
\mathbb{{N}}^{*}\right\} . $$
Clearly one has $L^{I;t}\subset L^{1,1}\left( \left[ 0,t\right] \times {\cal %
O}\right) $ and $\left\| u\right\| _{1,1;t}\le c\left\| u\right\|
^{I;t},$ for each $u\in L^{I;t},$ with a certain constant $c>0.$

We also remark that if $\left( p,q\right) \in I,$ then the conjugate pair $%
\left( p^{\prime },q^{\prime }\right) ,$ with $\frac 1p+\frac
1{p^{\prime }}=\frac 1q+\frac 1{q^{\prime }}=1,$ belongs to another
set, $I^{\prime },$
of the same type. This set may be described by%
$$ I^{\prime }=I^{\prime }\left( p_1,q_1,p_2,q_2\right) :=\left\{
\left( p^{\prime },q^{\prime }\right) /\;\exists \left( p,q\right)
\in I\;s.t.\;\frac 1p+\frac 1{p^{\prime }}=\frac 1q+\frac
1{q^{\prime }}=1\right\} $$ and it is not difficult to check that
$I^{\prime }\left( p_1,q_1,p_2,q_2\right) =I\left( p_1^{\prime
},q_1^{\prime },p_2^{\prime },q_2^{\prime }\right) ,$ where
$p_1^{\prime },q_1^{\prime },p_2^{\prime }$ and $q_2^{\prime }$ are
defined by $\frac 1{p_1}+\frac 1{p_1^{\prime }}=\frac 1{q_1}+\frac
1{q_1^{\prime }}=\frac 1{p_2}+\frac 1{p_2^{\prime }}=\frac
1{q_2}+\frac 1{q_2^{\prime }}=1.$

Moreover, by H\"older's inequality, it follows that one has
\begin{equation}
\label{dual}\int_0^t\int_{{\cal O}}u\left( s,x\right) v\left(
s,x\right) dxds\le \left\| u\right\| _{I;t}\left\| v\right\|
^{I^{\prime };t},
\end{equation}
for any $u\in L_{I;t}$ and $v\in L^{I^{\prime };t}.$ This inequality
shows
that the scalar product of $L^2\left( \left[ 0,t\right] \times {\cal O}%
\right) $ extends to a duality relation for the spaces $L_{I;t}$ and
$L^{I^{\prime };t}.$

Now let us recall that the Sobolev inequality states that%
$$ \left\| u\right\| _{2^{*}}\le c_S\left\| \nabla u\right\| _2,
$$ for each $u\in H_0^1\left( {\cal O}\right) ,$ where $c_S>0$ is
a constant
that depends on the dimension and $2^{*}=\frac{2d}{d-2}$ if $d>2,$ while $%
2^{*}$ may be any number in $]2,\infty [$ if $d=2$ and $2^{*}=\infty $ if $%
d=1$ (see for example \cite{Evans}, Chapter 5). Therefore one has%
$$ \left\| u\right\| _{2^{*},2;t}\le c_S\left\| \nabla u\right\|
_{2,2;t}, $$ for each $t\ge 0$ and each $u\in L_{loc}^2\left(
\mathbb{R}_{+};H_0^1\left( {\cal O}\right) \right) .$ And if $u\in
L_{loc}^{\infty}\left( \mathbb{R}_{+}; L^2\left( {\cal O}\right) \,
\right) \bigcap L^2_{loc} \left( \mathbb{R}_+;  H_0^1\left( {\cal
O}\right) \right),$ one has
$$\left\| u\right\| _{2,\infty ;t}\vee \left\| u\right\|
_{2^{*},2;t}\le c_1\left( \left\| u\right\| _{2,\infty ;t}^2+\left\|
\nabla u\right\| _{2,2;t}^2\right) ^{\frac 12},$$with $c_1=c_S\vee
1.$

One particular case of interest for us in relation with this
inequality is when $p_1=2,q_1=+\infty $ and $p_2=2^{*},q_2=2.$ If
$I=I\left( 2,\infty ,2^{*},2\right) ,$ then the corresponding set of
associated conjugate numbers is $I^{\prime }=I^{\prime }\left(
2,\infty ,2^{*},2\right) =I\left( 2,1,\frac{2^{*}}{2^{*}-1},2\right)
,$ where for $d=1$ we make the convention that
$\frac{2^{*}}{2^{*}-1}=1.$ In this particular case we shall use the
notation $L_{\#;t}:=L_{I;t}$ and $L_{\#;t}^*:=L^{I^{\prime };t}$ and
we recall that we have introduced the following norms%
$$ \left\| u\right\| _{\#;t}:=\left\| u\right\| _{I;t}=\left\|
u\right\| _{2,\infty ;t}\vee \left\| u\right\|
_{2^{*},2;t},\;\left\| u\right\|_{\#;t}^*:=\left\| u\right\|
^{I^{\prime };t}. $$ Thus we may write
\begin{equation}
\label{sobolev}\left\| u\right\| _{\#;t}\le c_1\left( \left\|
u\right\| _{2,\infty ;t}^2+\left\| \nabla u\right\|
_{2,2;t}^2\right) ^{\frac 12},
\end{equation}
for any  $u\in L_{loc}^{\infty}\left( \mathbb{{R}}_{+}; L^2\left(
{\cal O}\right) \, \right) \bigcap L^2_{loc} \left( \mathbb{R}_+;
H_0^1\left( {\cal O}\right) \right)$ and $t\ge 0$ and the
duality inequality becomes%
$$ \int_0^t\int_{{\cal O}}u\left( s,x\right) v\left( s,x\right)
dxds\le \left\| u\right\| _{\#;t}\left\| v\right\|_{\#;t}^*, $$ for
any $u\in L_{\#;t}$ and $v\in L_{\#;t}^*.$

For $d\ge 3$ and some parameter
$\theta \in [0,1[$ we
used the notation%
$$ \Gamma _\theta ^{*}=\left\{ \left( p,q\right) \in \left[
1,\infty \right] ^2/\;\frac d{2p}+\frac 1q=1-\theta \right\} , $$
$$ L_\theta ^{*}=\sum_{\left( p,q\right) \in \Gamma _\theta
^{*}}L^{p,q}\left( \left[ 0,t\right] \times {\cal O}\right) $$ $$
\left\| u\right\| _{\theta ;t}^{*}:=\,\inf \left\{
\sum_{i=1}^n\left\| u_i\right\|
_{p_i,q_i;\,t}\,/\;u=\sum_{i=1}^nu_i,u_i\in L^{p_i,q_i}\left(
\left[ 0,t\right] \times {\cal O}\right) ,\right. $$ $$
\left. \left( p_i,q_i\right) \in \Gamma _\theta ^{*},\,i=1,...n;\,n\in {\bf N%
}^{*}\right\} . $$
If $d=1,2.$ we put
 $$ \Gamma _\theta
^{*}=\left\{ \left( p,q\right) \in \left[ 1,\infty \right]
^2/\;\frac{2^{*}}{2^{*}-2}\frac 1p+\frac 1q=1-\theta \right\} $$
with the convention $%
\frac{2^{*}}{2^{*}-2}=1$ for $d=1.$

We want to express these quantities in the new notation introduced
in the subsection \ref{lpq} and to compare the norms $\left\|
u\right\|
_{\theta ;t}^{*}$ and $\left\| u\right\| ^*_{\#;t}.$ So, we first remark that $%
\Gamma _\theta ^{*}=I\left( \infty ,\frac 1{1-\theta },\frac
d{2\left( 1-\theta \right) },\infty \right) $ and that the norm
$\left\| u\right\| _{\theta ;t}^{*}$ coincides with $\left\|
u\right\| ^{\Gamma _\theta ^{*};t}=\left\| u\right\| ^{I\left(
\infty ,\frac 1{1-\theta },\frac d{2\left( 1-\theta \right)
},\infty \right) ;t}.$ On the other hand, we
recall that the norm $\left\| u\right\| ^*_{\#;t}$ is associated to the set $%
I\left( 2,1,\frac{2^{*}}{2^{*}-1},2\right) ,$ i.e. $\left\|
u\right\|
^*_{\#;t} $ coincides with $\left\| u\right\| ^{I\left( 2,1,\frac{2^{*}}{%
2^{*}-1},2\right) ;t}.$
\subsection{Proof of Proposition \ref{opU}}{\label{proofopU}}
Assume first that $w\in \left( C_c^{\infty} (\R_+)\otimes H^1_0 (\cO )\right)^d $.
% As in Section \ref{linearcase},
%we mollify coefficients $a_{i,j}$ and so consider
%sequences $(a^n_{i,j})_n$ of $C^{\infty}$ functions such that for
%all $n\in\Ne$, the matrix $a^n$ satisfies the same ellipticity and
%boundedness assumptions as $a$ and
%\[ \forall 1\leq i,j\leq d ,\ \lim_{n\rightarrow +\infty}
%a^n_{i,j}= a_{i,j}\   a.e.\]
% We also approximate $\cO$ by a an increasing sequence of smooth domains $(\cO^n )$ for $ n \geq 1$.\\
% We keep all the notations and conventions of Section \ref{linearcase} and as a consequence of Lemma \ref{convGreen} we can assume that $(G^n )$ converges everywhere to $G$ in $\Delta$. We set for all $n$
% $$u^n_t = \sum_{i=1}^d \int_0^t G^n (t,\cdot ,s
%,y)\partial_{i,y}w_i (s,y) dy ds  ,$$
% and
%$$u_t = \sum_{i=1}^d \int_0^t G (t,\cdot ,s
%,y)\partial_{i,y}w_i (s,y) dy ds .$$
%By the same arguments as the ones used in the proof of Theorem \ref{MainTheo}, we can construct sequence of convex combinations of $(u^n )$ denoted by
%$(\tilde{u}^n )$ wich converges to $u$ in $\hat{F}_T$. Moreover, thanks to the regularity of $G^n$ and $\partial\cO_n$, it is clear that for all $n\in\Ne$, $\tilde{u}^n$ is $C^1$ in time and $C^{\infty}$ in space.
In this case, the fact that $u$ is the weak solution of the given equation  and satisfies equality \eqref{Append1} i.e.
\begin{equation*} \frac 12 \|u_t \|^2 +\int_0^t \sum_{i,j=1}^d\int_{\cO}
a_{i,j}(s,x)\partial_i u_s (x)\partial_j u_s (x)\, dx\, ds
=-\sum_{i=1}^d \int_0^t \left(  \tilde{w}_{i,s} ,\partial_iu_s \right)\,
ds, \ \ t\geq 0\end{equation*} is a consequence of Theorem \ref{MainTheo} and Proposition \ref{Ito2}
with $w'=\partial \tilde{w}$ and $\xi=w''=w=0$.\\
Then, thanks to the ellipticity assumptions, we get:
\begin{eqnarray*}\frac12 \| u_t \|^2 +\lambda \int_0^t \|\nabla u_s \|^2 ds &\leq &\int_0^t |\sum_{i=1}^d (w_{i,s}, \partial_i u_s ) |\,  ds\\
&\leq& \frac{\lambda}{2} \int_0^t \|\nabla u_s \|^2 ds +\frac{8}{\lambda} \int_0^t \| |w_s |\|^2 \, ds.
\end{eqnarray*}
from this we clearly get estimate \eqref{Append2}.\\
The general case is obtained by approximating $\tilde{w}$ by a sequence of elements in $ \left( C_c^{\infty} (\R_+)\otimes H^1_0 (\cO )\right)^d$.
\subsection{Proof of Theorem \ref{ExistenceL2}}\label{Appendix2}

 We  keep the same notations as in Theorem \ref{MainTheo}.\\ Let $\gamma$ and $\delta$ be 2 positive constants. On  $F_T$, we introduce the norm
\[\forall u\in F_T ,\
\parallel u\parallel_{\gamma,\delta}=E\left( \int_0^T e^{-\gamma t}\left(\delta\parallel u_t
\parallel^2+\| \nabla u_t \|^2\right)\, dt\right) .\]
It is clear that $\parallel \cdot \parallel_{\gamma ,\delta}$ is
equivalent to $\parallel \cdot\parallel_{F_T}$. We consider the
map, $\Lambda$, from $F_T$ into $F_T$ defined by:\\ $\forall u\in F_T
,\forall (t,x)\in [0,T]\times \cO :$
\begin{equation}\label{mild} \begin{split}
\Lambda (u)(t,x )=&\int_{\cO}G(t,\cdot,0,y)\xi(y)\, dy
+\int_0^t\int_{\cO} G(t,\cdot,s,y)f(s,y ,u_s (y),\nabla u_s (y))dy
ds \\
&\ \ +\sum_{i=1}^{d}\int_0^t\int_{\cO}
G(t,\cdot,s,y)\partial_{i,y}g_i(s,. ,u_s ,\nabla u_s )(y)dy ds
\\&\ \ +\sum_{i=1}^{+\infty}\int_0^t\int_{\cO} G(t,\cdot,s,y)h_i(s,y ,u_s (y),\nabla
u_s (y))dB^i_s .\\
\end{split}
\end{equation}
Let $u$ and $v$ be in $F_T$. We put:
\[\forall s\in [0,T],\, \fb_s =f(s,\cdot ,u_s ,\nabla u_s )-f(s,\cdot ,v_s ,\nabla v_s
),\]
\[\forall s\in [0,T],\, \gb_s =g(s,\cdot ,u_s ,\nabla u_s )-g(s,\cdot ,v_s ,\nabla v_s
),\]
\[\forall s\in [0,T],\, \hb_s =h(s,\cdot ,u_s ,\nabla u_s )-h(s,\cdot ,v_s ,\nabla v_s
),\]
and $\forall t\in [0,T],$
\begin{eqnarray*} \ub_t
&=&\Lambda(u)_t -\Lambda(v)_t\\
&=& \int_0^t\int_{\cO} G(t,\cdot,s,y)\fb_s (y) \, dy\,
ds  +\sum_{i=1}^{d}\int_0^t\int_{\cO}
G(t,\cdot,s,y)\partial_{i,y}\gb_{i,s} (y)\,dy \,ds
\\&&\quad+\sum_{i=1}^{+\infty}\int_0^t\int_{\cO} G(t,\cdot,s,y)\hb_{i,s} (y)\, dy\,dB^i_s .
\end{eqnarray*}
 By  It\^{o}'s formula \eqref{Ito-varphi}, we get
\begin{equation*}
\begin{split}
e^{-\gamma T}\|\ub_T\|^2 & +2\int_0^T e^{-\gamma s}\int_{\cO}\sum_{i,j} a_{i,j}(s,x)\partial_i \ub_s (x)\partial_j \ub_s (x)\, dx ds=-\gamma \int_0^T e^{-\gamma s}\|\ub_s \|^2
ds \\
&+ 2\int_0^T e^{-\gamma s}\left(\ub_s ,\fb_s \right)   ds
-2\sum_{i=1}^d \int_0^T e^{-\gamma s}\left(\partial_i \ub_s ,\gb_{i,s} \right)  ds \\
& \quad +2\sum_{i=1}^{+\infty}\int_0^T e^{-\gamma s}\left(\ub_s ,\hb_{i,s }\right) dB_s+\int_0^T
e^{-\gamma s}\| \hb_s \|^2  ds.
\end{split}
\end{equation*}
%Then, using the hypotheses satisfied by  $g$, we have: \\ $$
%\gamma E( \int_0^T e^{-\gamma s}\parallel\ub_s\parallel_{\ld}^2 \,
%ds)+2E(\int_0^T e^{-\gamma s}e(\ub_s ,\ub_s )\, ds)\leq
%2E(\int_0^T e^{-\gamma s}(\ub_s ,\fb_s )_{\ld}\, ds)$$
%$+CE(\int_0^T e^{-\gamma s} \parallel u_s -v_s \parallel_{\ld}^2
%\, ds)+\alpha E(\int_0^T e^{-\gamma s}e(u_s -v_s ,u_s -v_s )\,
%ds),$\\
%
%where $C$ and $\alpha\in [0,2[$ are the constants which appear in
%the hypotheses of section \ref{hyp}.
Using hypotheses on
$f$, $g$, $h$ we have for all $\varepsilon>0$
\begin{eqnarray*}
2\int_0^T e^{-\gamma s}(\ub_s ,\fb_s )\,
ds&\leq&1/\varepsilon \int_0^T e^{-\gamma s} \parallel\ub_s
\parallel^2 \, ds+\varepsilon \int_0^T e^{-\gamma s} \parallel\fb_s
\parallel^2 \, ds\\
&\leq&1/\varepsilon \int_0^T e^{-\gamma s} \parallel\ub_s
\parallel^2 \, ds+C\varepsilon \int_0^T e^{-\gamma s} \parallel u_s -v_s \parallel^2
\, ds\\&&+C\varepsilon \int_0^T e^{-\gamma s}\| \nabla u_s -\nabla v_s \|^2\, ds,
\end{eqnarray*}

\begin{eqnarray*}
-2\sum_{i=1}^d \int_0^T e^{-\gamma s}\left(\partial_i \ub_s ,\gb_{i,s} \right)  \, ds&\leq&2\int_0^T e^{-\gamma s}\| \nabla \ub_s\|  (C\|u-v\| +\alpha \| \nabla u_s-\nabla v_s \| )\, ds\\&\leq& C\varepsilon \int_0^T e^{-\gamma s}\| \nabla \ub_s \|^2\, ds+\frac{C}{\varepsilon }\int_0^T e^{-\gamma s} \parallel u_s -v_s \parallel^2
\, ds\\&&+\alpha \int_0^T e^{-\gamma s}\| \nabla \ub_s \|^2\, ds+\alpha \int_0^T e^{-\gamma s}\| \nabla u_s -\nabla v_s \|^2\, ds,
\end{eqnarray*}

and

\begin{eqnarray*}
\int_0^T
e^{-\gamma s}\| \hb_s \|^2 \, ds &\leq& C(1+1/\varepsilon)\int_0^T e^{-\gamma s} \parallel u_s -v_s \parallel^2
\, ds+\beta^2 (1+\varepsilon) \int_0^T e^{-\gamma s}\| \nabla u_s -\nabla v_s \|^2\, ds,
\end{eqnarray*}

where $C$ , $\alpha$ and $\beta$ are the constants which appear in
the hypotheses of section \ref{hypo}. \\
Using the ellipticity assumption and taking the expectation, we obtain:
\begin{eqnarray*}
\begin{split}
(\gamma -1/\varepsilon ) &E( \int_0^T e^{-\gamma
s}\parallel\ub_s\parallel^2 \, ds)+(2\lambda -\alpha) E(\int_0^T e^{-\gamma
s}\| \nabla \ub_s \|^2\, ds)\leq \\&\hspace{-1cm}C(1+\varepsilon+2/\varepsilon) E(\int_0^T
e^{-\gamma s} \parallel u_s -v_s \parallel^2 \, ds)
+(C\varepsilon+\alpha+\beta^2 (1+\varepsilon) ) E(\int_0^T e^{-\gamma s}\|\nabla u_s -\nabla v_s \|^2\, ds).
\end{split}
\end{eqnarray*}

Now, we choose  $\varepsilon$ small enough and then $\gamma$ such
that
\[ C\varepsilon +\alpha + \beta^2 (1+\varepsilon) <2\lambda -\alpha \makebox{ and } \frac{\gamma-1/\varepsilon}
{2\lambda -\alpha}=\frac{C(1+\varepsilon+2/\varepsilon) }{C\varepsilon+\alpha+\beta^2 (1+\varepsilon ) }.\] If we set
$\delta= \frac{\gamma-1/\varepsilon} {2\lambda -\alpha}$, we have the following inequality:
\[ \forall u,v\in F_T^2 ,\, \parallel\Lambda (u)-\Lambda
(v)\parallel_{\gamma,\delta}\leq \frac{C\varepsilon +\alpha + \beta^2 (1+\varepsilon)
}{2\lambda -\alpha}\parallel u-v\parallel_{\gamma ,\delta}.\] We conclude thanks
to the fixed point Theorem and estimate \eqref{priori:estimate}.
\subsection{The truncation sequence $(\phi_n )_n$}
We first denote $\rho (x)$ the distance from a point $x\in\cO$ to the boundary of $\cO$, $\partial \cO$.\\
Following standard construction, for any $n\in\Ne$, we can construct a function $\phi_n \in C_0^{\infty} (\cO )$ satisfying the following properties:
\begin{enumerate}
\item $0\leq \phi_n \leq 1$;
\item $\phi_n =1$ on $\{ x\in\cO,\ \rho (x)\geq \frac{1}{n}\}$;
\item $\phi_n =0$ on $\{ x\in\cO,\ \rho (x)\leq \frac{1}{2n}\}$;
\item $|\partial_x \phi_n |\leq 3n$.
\end{enumerate}
\begin{Le}{\label{Troncation}} Assume that $\partial \cO$ is Lipschitz. Let $w\in H^1_0 (\cO )$, then
$(\phi_n w)_n$ tends to $w$ in $H^1_0 (\cO )$. \\
Moreover there exists a constant $C>0$ such that
$$\forall w\in H^1_0 (\cO),\ \sup_n \| \phi_n w\|_{H^1_0 (\cO )}\leq C\| w\|_{H^1_0 (\cO )}.$$
\end{Le}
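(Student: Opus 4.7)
The $L^2$-part is immediate: since $0\leq \phi_n\leq 1$ and $\phi_n\to 1$ pointwise on $\cO$, dominated convergence gives $\phi_n w\to w$ in $L^2(\cO)$, and $\|\phi_n w\|\leq \|w\|$. All the work is in controlling the gradient
\[
\nabla(\phi_n w)=\phi_n\nabla w+w\nabla\phi_n.
\]
The first summand converges to $\nabla w$ in $L^2(\cO)^d$ by dominated convergence and is bounded by $|\nabla w|$. The crux is the second summand $w\nabla\phi_n$.

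The key observation is that, by construction, $\nabla\phi_n$ is supported in the strip $S_n=\{x\in\cO:\ 1/(2n)\le\rho(x)\le 1/n\}$, where $\rho(x)=\mathrm{dist}(x,\partial\cO)$. On $S_n$ one has $n\le 1/\rho(x)$, so the pointwise bound $|\nabla\phi_n|\le 3n$ translates into the $\phi_n$-independent estimate
\[
|\nabla\phi_n(x)|\le \frac{3}{\rho(x)}\,\mathbf{1}_{\{\rho(x)\le 1/n\}}(x),\qquad x\in\cO.
\]
Hence
\[
\int_\cO |w\,\nabla\phi_n|^2\,dx\;\le\; 9\int_{\{\rho\le 1/n\}}\frac{|w(x)|^2}{\rho(x)^2}\,dx.
\]

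The plan is therefore to invoke Hardy's inequality for Lipschitz domains: there exists $C_H>0$, depending only on $\cO$, such that for every $w\in H^1_0(\cO)$,
\[
\int_\cO \frac{|w(x)|^2}{\rho(x)^2}\,dx\;\le\; C_H\,\|\nabla w\|^2.
\]
This is the step that uses the Lipschitz regularity of $\partial\cO$ (see e.g.\ Ne\v{c}as or Kufner) and is the only nontrivial ingredient; I would cite it rather than reprove it. Combined with the pointwise bound above, it yields
\[
\|w\,\nabla\phi_n\|^2\le 9C_H\,\|\nabla w\|^2,
\]
and therefore
\[
\|\phi_n w\|_{H^1_0(\cO)}\le \|w\|+\|\nabla w\|+3\sqrt{C_H}\,\|\nabla w\|\le C\,\|w\|_{H^1_0(\cO)},
\]
which is the uniform bound.

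For the convergence $\phi_n w\to w$ in $H^1_0(\cO)$, it suffices now to show that $w\,\nabla\phi_n\to 0$ in $L^2(\cO)^d$. By Hardy's inequality the function $|w|/\rho$ lies in $L^2(\cO)$; consequently
\[
\int_{\{\rho\le 1/n\}}\frac{|w|^2}{\rho^2}\,dx \;\xrightarrow[n\to\infty]{}\;0
\]
by dominated convergence, since $\mathbf{1}_{\{\rho\le 1/n\}}\to 0$ a.e.\ on $\cO$. Combined with $\phi_n\nabla w\to \nabla w$ in $L^2(\cO)^d$, this finishes the proof. The only real obstacle is the Hardy inequality, whose validity under the Lipschitz assumption is exactly what forces that hypothesis into the statement.
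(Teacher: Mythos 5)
Your proof is correct and rests on the same key ingredient as the paper's: the Hardy-type inequality for Lipschitz domains guaranteeing that $w/\rho\in L^2(\cO)$ for $w\in H^1_0(\cO)$ (the paper cites Grisvard, Theorem 1.4.4.4, for exactly this), combined with the bound $|\nabla\phi_n|\le 3n\le 3/\rho$ on the support strip. The only (minor) divergence is in the uniform bound: the paper deduces it non-constructively from the pointwise convergence via the Banach--Steinhaus theorem, whereas you extract an explicit constant directly from the quantitative form of Hardy's inequality, which is arguably cleaner but not a genuinely different route.
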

\begin{proof}
Let us prove the first assertion. Let $w\in H^1_0 (\cO )$.\\
It is clear that we juste have to prove that $(w\partial_x \phi_n )_n$ tends to $0$ in $L^2 (\cO )$.\\
But, we know that $\displaystyle\frac{w}{\rho}$ belongs to $L^2 (\cO )$ (see Theorem 1.4.4.4 p.29 in \cite{Grisvard}). So, we have
\begin{eqnarray*}
\lim_{n\rightarrow +\infty}\int_{\cO} |w(x)\partial\phi_n (x)|^2\, dx&=&\lim_{n\rightarrow +\infty} \int_{\{ \rho(x)\leq\frac{1}{n}\}} |w(x)\partial_x\phi_n (x)|^2\, dx\\
&\leq &\lim_{n\rightarrow +\infty} 9n^2\int_{\{ \rho(x)\leq\frac{1}{n}\}} |w(x)|^2\, dx\\
&\leq&\lim_{n\rightarrow +\infty} 9\int_{\{ \rho (x)\leq\frac{1}{n}\}} \left|\displaystyle\frac{w(x)}{\rho (x)}\right|^2\, dx\\
&=&0,
\end{eqnarray*}
which proves the first part of the Lemma.\\
The second assertion is a consequence of the Banach-Steinhaus Theorem.\\
\end{proof}
\section*{Acknowledgment} 
The authors wish to thank the anonymous referee for all the pertinent remarks he made and for having pointed out some missing references.

\end{document}